\newtheorem{proposition}{Proposition}[section]
\newtheorem{theorem}[proposition]{Theorem}
\newtheorem{lemma}[proposition]{Lemma}
\newtheorem{remark}[proposition]{Remark}
\numberwithin{equation}{section}
\newcommand{\R}{\mathbf{R}}
\newcommand{\supp}{\mathrm{supp}}
\newcommand{\8}{\infty}
\newcommand{\pl}{\partial}
\begin{document}

\title{Remarks on the asymptotic behavior of the solution to an abstract damped wave equation}
\author{Hisashi Nishiyama}
\date{}
\maketitle

\begin{abstract}
We study an abstract damped wave equation. We prove that the solution of the damped wave equation becomes closer to the solution of a heat type equation as time tend to infinity. As an application of our approach, we also study the asymptotic behavior of the damped wave equation in Euclidean space under the geometric control condition.
\\

KEY WORDS: Damped wave equation, Asymptotic expansion, Heat equation, Diffusion phenomena, Geometric control condition, Variable damping, Energy decay.
\\

\noindent
AMS SUBJECT CLASSIFICATION: 35L05, 35K05, 35 Q99, 46N20
\end{abstract}

\begin{section}{Introduction}
We study the following abstract damped wave equation 
\begin{equation}\label{abstract wave}
(\pl_t^2+A+B\pl_t)u=0.
\end{equation}
Here  $\pl_t=\frac{\pl}{\pl t}$ and $A$, $B$ are self-adjoint operators on a Hilbert space $H$.  We assume $A$ is a densely-defined positive  operator i.e. $A>0$. $B$ is a strictly positive bounded operator on $H$ i.e. $B\geq c\ Id$ for a constant $c>0$. We also assume $B^{1/2}$ and $B^{-1/2}$ are continuous operators on $D(A)$, i.e. there exists  a constant $C>0$ such that for any $u\in D(A)$, the following bounds hold
\begin{align*}
&\|A B^{1/2}u\|_H \leq C (\|u\|_H+\|A u\|_H),\\
&\|A B^{-1/2}u\|_H \leq C (\|u\|_H+\|A u\|_H).
\end{align*}
An example of this equation is the usual damped wave equation
\begin{equation}\label{damped wave}
(\pl_t^2-\Delta+a(x)\pl_t)u=0\  \text {in} \ \R\times \R^d
\end{equation}
where $\Delta$ is the constant coefficient Laplace operator on $\R^d$, $a(x)$ is a strictly positive definite smooth function with bounded derivatives. It is known that eventually the solution of $(\ref{damped wave})$ are close to the solution of a heat type equation. Here we see this by heuristic argument. The stationary equation is closely related to the asymptotic profile. So we take $u=e^{t\lambda}u_0(x)$ in $(\ref{damped wave})$, we have 
the following stationary damped wave equation
\begin{equation}\label{stationary damped wave}
(\lambda^2+a(x)\lambda-\Delta)u_0=0.
\end{equation}
To the solution of $(\ref{damped wave})$, the influence of large frequency part becomes smaller as time tend to infinity by damping. 
So for small $\lambda$, if we neglect $\lambda^2$ term in $(\ref{stationary damped wave})$, we obtain
$$(a(x)\lambda-\Delta)u_0=0.$$
This equation is a stationary equation to the following heat equation
$$(a(x)\pl_t-\Delta)u_0=0.$$
So we may expect the asymptotic behavior of the solution to $(\ref{abstract wave})$ is close to the following abstract heat equation
\begin{equation}\label{abstract heat}
B^{1/2}(\pl_t+\tilde A)B^{1/2}v=0.
\end{equation}
Here $\tilde A=B^{-1/2}AB^{-1/2}$. The solution of this equation can be written as follows
$$B^{-1/2}e^{-t\tilde A}B^{1/2}u_0. $$
Thus we can expect that the solution of the damped wave equation approach to the above solution as time tend to infinity.  The following statement gives a justification of this argument.
\begin{theorem}\label{main}
Let $u$ be the solution to the Cauchy problem of the following abstract damped wave equation
\begin{equation}\label{Cauchy}
\begin{cases}
&(\pl_t^2+A+B\pl_t)u=0,\\
& u|_{t=0}=u_0, \pl_t u|_{t=0}=u_1.
\end{cases}
\end{equation}
Then there exists $C>0$ such that for any $u_0\in D(\sqrt A)$, $u_1\in H$ and $t>1$, the following asymptotic profile holds
\begin{align*}
\|u(t)-B^{-1/2}e^{-t\tilde A}B^{1/2}u_0-B^{-1/2}e^{-t\tilde A}B^{-1/2}u_1\|_H\leq C{t^{-1}}(\|u_0\|_H+\|\sqrt Au_0\|_H+\|u_1\|_H ).
\end{align*}
\end{theorem}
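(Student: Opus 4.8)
The plan is to diagonalize in the spectral variable of $\tilde A = B^{-1/2}AB^{-1/2}$ and analyze the two characteristic roots of the symbol $\lambda^2 + B\lambda + A$ conjugated by $B^{1/2}$. First I would make the substitution $w = B^{1/2}u$; then \eqref{Cauchy} becomes, at least formally, $(\partial_t^2 + \tilde A + \partial_t)w = 0$ with $w|_{t=0} = B^{1/2}u_0$ and $\partial_t w|_{t=0} = B^{1/2}u_1$ — here the hypotheses that $B^{1/2}, B^{-1/2}$ preserve $D(A)$ are exactly what is needed to make this reduction rigorous and to pass bounds back and forth between $u$ and $w$ (with loss controlled by $\|u_0\|_H + \|\sqrt A u_0\|_H$). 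Using the spectral decomposition $\tilde A = \int_0^\infty \mu\, dE_\mu$, the problem decouples into the scalar ODEs $\ddot w_\mu + \dot w_\mu + \mu w_\mu = 0$, whose characteristic roots are $\lambda_\pm(\mu) = \tfrac{-1 \pm \sqrt{1-4\mu}}{2}$. The heat profile $e^{-t\tilde A}$ corresponds to the branch $\lambda_+(\mu) = -\mu + O(\mu^2)$ as $\mu \to 0$, while $\lambda_-(\mu) \to -1$ carries the dissipated, wave-like part.

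The core estimate is then a pointwise-in-$\mu$ comparison: I would show that the solution operator of the scalar equation, applied to the data, differs from $e^{-t\lambda_+(\mu)}$ times the appropriate combination of $w_\mu(0), \dot w_\mu(0)$ by a quantity bounded by $C t^{-1}$ uniformly in $\mu \ge 0$, and then integrate $dE_\mu$ to recover the $H$-norm statement. Concretely, for the low-frequency regime $\mu \le 1/4$ one writes the exact solution via $\lambda_\pm$, notes $e^{t\lambda_-(\mu)}$ decays like $e^{-t/2}$ (exponentially small, hence $O(t^{-1})$), and estimates $e^{t\lambda_+(\mu)} - e^{-t\mu}$; the difference $\lambda_+(\mu) + \mu = O(\mu^2)$ times $t$, combined with the decaying factor $e^{-ct\mu}$, yields $\sup_\mu |e^{t\lambda_+(\mu)} - e^{-t\mu}| \lesssim \sup_\mu t\mu^2 e^{-ct\mu} \lesssim t^{-1}$. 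For $\mu$ near $1/4$ the two roots collide and one has a secular $te^{-t/2}$ term, still $O(t^{-1})$. For high frequencies $\mu \ge 1/4$ the roots are complex with real part $-1/2$, so that branch of the solution is $O(e^{-t/2}) = O(t^{-1})$, and one must separately check that the contribution one is \emph{comparing against}, namely $e^{-t\mu}$ applied to the data, is likewise negligible — here $e^{-t\mu} \le e^{-t/4}$, so it is absorbed. Summing the regimes and transferring back through $B^{-1/2}$ gives the claimed bound.

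I expect the main obstacle to be bookkeeping the data-dependent factors uniformly in $\mu$: the scalar solution is $w_\mu(t) = \alpha_\mu(t) w_\mu(0) + \beta_\mu(t)\dot w_\mu(0)$ with $\alpha_\mu = \frac{\lambda_+ e^{t\lambda_-} - \lambda_- e^{t\lambda_+}}{\lambda_+ - \lambda_-}$ and $\beta_\mu = \frac{e^{t\lambda_+} - e^{t\lambda_-}}{\lambda_+ - \lambda_-}$, and one wants to show $\alpha_\mu(t) \approx e^{-t\mu}$ and $\beta_\mu(t) \approx e^{-t\mu}$ with errors $O(t^{-1})$ — the $\beta$ term is delicate because $(\lambda_+-\lambda_-)^{-1} = (1-4\mu)^{-1/2}$ blows up at $\mu = 1/4$, so the vanishing of the numerator there must be exploited carefully (Taylor expansion / mean value theorem in $\mu$ across the turning point). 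The factor $1 = \lambda_+ - \lambda_-$ normalization and the precise coefficient of $u_1$ (which in the theorem appears with a bare $B^{-1/2}e^{-t\tilde A}B^{-1/2}$, i.e. essentially $\beta$) is where the constant $c$ and the smoothness of the roots near $0$ must be tracked. Once the scalar estimates $|\alpha_\mu(t) - e^{-t\mu}| + |\beta_\mu(t) - e^{-t\mu}| \le C t^{-1}$ are established uniformly for $\mu \ge 0$ and $t > 1$, the rest is a routine application of the spectral theorem together with the continuity of $B^{\pm 1/2}$ on $D(A)$.
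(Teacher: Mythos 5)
There is a genuine gap at the very first step, and it is fatal to the whole strategy. Setting $w=B^{1/2}u$ in $(\pl_t^2+A+B\pl_t)u=0$ and multiplying on the left by $B^{-1/2}$ gives
\begin{equation*}
B^{-1}\pl_t^2 w+\tilde A w+\pl_t w=0,
\end{equation*}
not $(\pl_t^2+\tilde A+\pl_t)w=0$: the coefficient $B^{-1}$ of $\pl_t^2 w$ does not disappear, and no hypothesis of the theorem makes it commute with $\tilde A$ (the assumptions that $B^{\pm 1/2}$ preserve $D(A)$ only guarantee that $\tilde A$ is self-adjoint on $D(A)$; they say nothing about commutativity). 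Consequently the spectral decomposition $\tilde A=\int_0^\8 \mu\, dE_\mu$ does \emph{not} decouple the equation into the scalar ODEs $\ddot w_\mu+\dot w_\mu+\mu w_\mu=0$, and the entire pointwise-in-$\mu$ analysis of the roots $\lambda_\pm(\mu)$ that follows has no object to apply to. Your scalar estimates themselves (e.g. $\sup_\mu t\mu^2 e^{-ct\mu}\lesssim t^{-1}$, the confluence at $\mu=1/4$, the exponential decay of the $\lambda_-$ branch) are correct and would indeed prove the theorem in the special case where $B$ commutes with $A$ --- this is exactly the Fourier computation the paper itself carries out for $(\pl_t^2-\Delta+\pl_t)u=0$ as a motivating example --- but the theorem is stated for general non-commuting $A$ and $B$, which is the case of interest for variable damping $a(x)$.

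The paper circumvents non-commutativity by working entirely on the resolvent side. It inverts the full quadratic pencil, $R(\lambda)=(\lambda^2+B\lambda+A)^{-1}$, proves $\|R(\lambda)\|={\cal O}(|\lambda|^{-1})$ near the origin and in the relevant half-plane, represents the propagator as a contour integral of $(i\lambda-{\cal A})^{-1}e^{i\lambda t}$ against the Fourier--Laplace transform of a cut-off of the solution, and then extracts the heat profile from the \emph{operator} identity
\begin{equation*}
R(\lambda)=(B\lambda+A)^{-1}-\lambda^2(B\lambda+A)^{-1}R(\lambda),\qquad (B\lambda+A)^{-1}=B^{-1/2}(\lambda+\tilde A)^{-1}B^{-1/2},
\end{equation*}
which replaces your Taylor expansion $\lambda_+(\mu)=-\mu+{\cal O}(\mu^2)$: the first term produces $B^{-1/2}e^{-t\tilde A}B^{-1/2}$ after contour deformation, and the $\lambda^2$ remainder, being ${\cal O}(|\lambda|)$ on the contour, yields the $t^{-1}$ error. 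The high-frequency part is killed separately by a spectral cut-off $\phi_B(A)=B^{-1/2}\phi(\tilde A)B^{1/2}$ and a Morawetz-type argument. If you want to salvage your approach, you would need to treat the $B^{-1}\pl_t^2 w$ term perturbatively rather than exactly --- which is precisely what the identity above does.
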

\begin{remark}
We can get more sharp estimate, see Remark $\ref{improve}$. On the other hand, this decay rate is optimal, see Proposition $\ref{optimal}$ which is a generalization of the result of \cite{Ch-Ha}. 
\end{remark}
In many situation, we impose some assumptions to the initial data. In the case, Theorem ${\ref{main}}$ does not give sufficient information. 
For example, we consider the constant coefficient damped wave equation on $\R^d$
$$(\pl_t^2 -\Delta +\pl_t )u=0.$$
The correspondence heat propagator satisfies
$$e^{t\Delta}: L^1 \to L^2={\cal O}(t^{-d/4}).$$
From this decay estimate, we can not get the top term from Theorem ${\ref{main}}$ if the initial date $u_0,u_1$ are in $L^1$ and $d>3$.  So 
we need to impose some more assumption to the equation. For the purpose, we specify $H=L^2(\Omega ,\mu)$ where $(\Omega,\mu)$ is a $\sigma$-finite measure space and assume that $\{e^{-t\tilde A}\}_{t>0}$ is also 
a bounded semi-group on $L^1(\Omega,\mu)$
\begin{equation}\label{bddsemi}
\|e^{-t\tilde A}u\|_{L^1}={\cal O}(1)\|u\|_{L^1}, t>0.
\end{equation}
We also assume that there exists $m>0$ such that the following diffusion estimate holds
\begin{equation}\label{diff}
\|e^{-t\tilde A}u\|_{L^2}={\cal O}(t^{-m})\|u\|_{L^1},\ t>0.
\end{equation}
Furthermore $B^{1/2}$ is also a bounded operator on $L^1$ satisfying
\begin{equation}\label{B}
\frac{1}{C} \|u\|_{L^1}\leq \|B^{1/2}u\|_{L^1}\leq C\|u\|_{L^1}
\end{equation}
for a constant $C>0$. Our model problem $(\ref{damped wave}) $, satisfies this assumption. The next result is a diffusion type estimate for such operators. 
\begin{theorem}\label{main 2}
Under the assumptions of Theorem ${\ref{main}}$, we also assume $(\ref{bddsemi})$, $(\ref{diff})$ and $(\ref{B})$. Then there exists a constant $C>0$ such that for any $u_0\in D(\sqrt A)\cap L^1$, $u_1\in L^2 \cap L^1$, $t>1$, the solution $u(t)$ of $(\ref{Cauchy})$ satisfies the following asymptotic profile
\begin{align*}
\|u(t)-B^{-1/2}e^{-t\tilde A}B^{1/2}u_0-B^{-1/2}e^{-t\tilde A}B^{-1/2}u_1\|_{L^2}\leq C{t^{-m-1}}(\|u_0\|_{L^2 \cap L^1}+\|\sqrt Au_0\|_{L^2}+\|u_1\|_{L^2 \cap L^1} ).
\end{align*}
\end{theorem}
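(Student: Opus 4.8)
The plan is to reduce Theorem \ref{main 2} to Theorem \ref{main} by a duality/interpolation argument that upgrades the $L^2\to L^2$ rate to an $L^1\cap L^2 \to L^2$ rate, exploiting the extra decay \eqref{diff} of the heat semigroup on the scale of $L^1$. First I would isolate the "error operator"
\[
R(t)u_0,u_1 \;\eqdef\; u(t)-B^{-1/2}e^{-t\tilde A}B^{1/2}u_0-B^{-1/2}e^{-t\tilde A}B^{-1/2}u_1 ,
\]
and note that, since both the damped wave flow and the corrected heat flow are built from functions of $A$ (equivalently, of $\tilde A$ after conjugating by $B^{1/2}$), $R(t)$ can be written explicitly via the functional calculus in $\sqrt A$. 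Concretely, passing to the variable $v=B^{1/2}u$ turns \eqref{Cauchy} into $(\pl_t^2 + \tilde A + \pl_t)v + [\text{bounded perturbation}]=0$ up to the commutator terms controlled by the hypotheses on $B^{\pm1/2}$ mapping $D(A)\to D(A)$; the leading part is a genuine abstract damped wave equation in $\tilde A$, for which the spectral representation of $R(t)$ in terms of $\tilde A$ is available from the proof of Theorem \ref{main}.

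The key step is then a semigroup factorization: for $t>1$ write $R(t)=R(t/2)\circ e^{-(t/2)\tilde A}$-type identities (more precisely, split $R(t)$ so that one factor is $e^{-(t/2)\tilde A}$, or $B^{-1/2}e^{-(t/2)\tilde A}B^{1/2}$, acting first, and the remaining factor is an operator obeying the $L^2\to L^2$ bound of Theorem \ref{main} at time $t/2$). The first factor maps $L^1\cap L^2\to L^2$ with norm ${\cal O}((t/2)^{-m})={\cal O}(t^{-m})$ by \eqref{diff}, together with \eqref{B} to absorb the $B^{1/2}$; the second factor maps $L^2\to L^2$ with norm ${\cal O}((t/2)^{-1})={\cal O}(t^{-1})$ by Theorem \ref{main}. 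Composing gives the ${\cal O}(t^{-m-1})$ bound, with the right-hand side norms $\|u_0\|_{L^2\cap L^1}+\|\sqrt A u_0\|_{L^2}+\|u_1\|_{L^2\cap L^1}$ appearing because the $\sqrt A u_0$ contribution only needs the $L^2$ norm (it is already "one derivative worth" of regularity and is not improved by the $L^1$ smoothing), whereas the $u_0,u_1$ contributions pick up the $L^1$-to-$L^2$ gain. One must also check that the bounded-perturbation/commutator terms separating the conjugated damped wave operator from the exact damped wave equation do not destroy the factorization; here \eqref{bddsemi} and the boundedness of $B^{\pm1/2}$ on $L^1$ keep everything on the $L^1\cap L^2$ scale.

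The main obstacle I anticipate is making the semigroup-splitting of $R(t)$ rigorous: $R(t)$ is not literally a semigroup, so one cannot just write $R(t)=R(t/2)^2$. Instead one has to go back into the oscillatory/spectral formula for $R(t)$ produced in the proof of Theorem \ref{main} — a contour or Fourier integral in the spectral parameter of $\sqrt A$ — and factor out an $e^{-(t/2)\tilde A}$ (which in the spectral variable $s\in\sigma(\sqrt A)$ is the Gaussian-type factor $e^{-(t/2)\psi(s)}$ with $\psi(s)\sim s^2$ for small $s$ and $\psi(s)\sim$ const for large $s$, coming from the damped-wave dispersion relation $\lambda_\pm(s)$). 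The low-frequency regime $s\to 0$ is where the $t^{-m}$ gain genuinely comes from and must be matched against \eqref{diff}; the high-frequency regime must be handled by the exponential damping so that the $L^1\to L^2$ step is harmless there. Once this factorization is set up, the rest is the routine bookkeeping of collecting the $L^1\cap L^2$ norms, which I would not spell out in detail.
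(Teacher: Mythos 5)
Your guiding idea---extract a heat semigroup over time $\sim t/2$ from the representation of the error and hit it with the $L^1\to L^2$ smoothing \eqref{diff}, leaving a factor that supplies the extra $t^{-1}$ by oscillatory-integral/contour estimates---is indeed the mechanism the paper uses. But the way you propose to realize it has genuine gaps. First, the reduction to a ``damped wave equation in $\tilde A$ plus bounded perturbation'' is not available: setting $v=B^{1/2}u$ turns \eqref{Cauchy} into $\pl_t^2v+B^{1/2}AB^{-1/2}v+B\pl_tv=0$, so the damping coefficient is still $B$, not the identity, and $B^{1/2}AB^{-1/2}$ is not $\tilde A$; more importantly, since $A$ and $B$ need not commute, there is no joint functional calculus and no scalar dispersion relation $\lambda_\pm(s)$ in a spectral variable of $\sqrt A$. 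The paper never diagonalizes; it iterates the operator identity $R(\lambda)=(B\lambda+A)^{-1}-\lambda^2(B\lambda+A)^{-1}R(\lambda)$, with $(B\lambda+A)^{-1}=B^{-1/2}(\lambda+\tilde A)^{-1}B^{-1/2}$, inside the low-frequency contour integral, and then factors the semigroup out of each resolvent via the Duhamel-type identity
$(i\lambda+\tilde A)^{-1}=(i\lambda+\tilde A)^{-1}e^{-t(i\lambda+\tilde A)/2}+\int_0^{t/2}e^{-t_1(i\lambda+\tilde A)}\,dt_1$.
This identity is the missing ingredient that makes your ``semigroup splitting'' rigorous without any commutativity and without pretending that the error operator composes like a semigroup (which, as you note, it does not).

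Second, your treatment of the high-frequency part is insufficient. The part of the solution cut off to high frequencies of $A$ (the $\phi_B(A)U(t)f$ part) does not decay exponentially: in the proof of Theorem \ref{main} it only decays like $t^{-2}$ in ${\cal H}$ (Proposition \ref{high}), because the contribution from the portion of the contour near $\lambda=0$ is only polynomially suppressed. If $m>1$ this is weaker than the claimed $t^{-m-1}$, so you cannot dismiss this regime as ``harmless''; the paper needs a separate argument (Proposition \ref{low exp}) which again inserts the semigroup factorization into the high-frequency integral and uses smoothing estimates such as $\|A^{1/2}B^{-1/2}e^{-t_1\tilde A/2}B^{-1/2}\|_{H\to H}={\cal O}(t_1^{-1/2}+1)$ to reach $t^{-m-2}$ there. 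Relatedly, the $\|\sqrt Au_0\|_{L^2}$ term on the right-hand side comes precisely from this high-frequency estimate and from the $i\lambda u_0$ entry of the resolvent matrix, not from a generic ``one derivative of regularity is not improved by $L^1$ smoothing'' consideration.
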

 In the final section, as an application of the above theorems, we consider an example of damped wave equation on Euclidean space and we give an asymptotic profile of the solution for a variable coefficient damped wave equation, see Theorem $\ref{euc damp}$. We also treat a perturbation of this case for which the strictly positivity of the damping term may not be satisfied but the geometric control condition holds. In this case, we can give the similar asymptotic profile,  
see Theorem $\ref{geom asym}$.

Now we remark some related result. 
On bounded domains or manifolds, under the geometric control condition, the exponential energy decay of the solution was proved in \cite{Ba-Le-Ra1}, \cite{Ra-Ta}. On the other hand, for unbounded domains the energy does not decay exponentially. 
In \cite{Ma}, the fact that the decay late of the solution is similar to the heat equation is proved to study the semi-linear damped wave equation. The solution of the damped wave equation tend to the solution of the heat equation is well-known for example \cite{Ni}. This type results  may be called diffusion phenomena for the damped wave equation. For variable coefficient damping, Y, Wakasugi 
(\cite{Wa}) proved the diffusion phenomena for spacial decreasing damping. For abstract setting, the diffusion phenomena may be proved in \cite{Ra-To-Yo} but at present, the manuscript does not published and their result seems to slightly different from ours.  

If the damping term is too weak, for example short range case, the asymptotic behavior of the solution is quite different. In the case, the solution tend to the solution of the wave equation and the local energy decay is an important problem, see \cite{Bo-Ro}.
\end{section} 
\begin{section}{Abstract damped wave equation}
We recall the setting of the problem. In this section, we only assume that $B$ is a bounded and non-negative operator. We reduce $(\ref{abstract wave})$ to a first order system. We introduce the following operator
$${\cal A}=
\begin{pmatrix}
0 & Id \\
- A& -B \\
\end{pmatrix}
:\ {\cal H}_0\rightarrow {\cal H}_0,\  D({\cal A})=\{{}^t(u,v)\in H_1\times H; Au\in H, v\in D(\sqrt A)\}.
$$
Here ${\cal H}_0=H_1\times H$ is the energy space, $H_1$ is the Hilbert space completion of $D(A)$ by the norm
$$\|u\|_{H_1}=\|{\sqrt A } u\|_H^2.$$
$H^{-1}$ denotes the dual space of $H_1$. We note $D(\sqrt A)$ is naturally  embedded in $H^1$. So in what follows, we consider $D(\sqrt A)$ is a subspace of $H^1$. By using this operator, we can rewrite the wave equation (\ref{Cauchy}) as follows, 
\begin{equation}
\pl_t
\begin{pmatrix}
u\\ \pl_t u\\
\end{pmatrix}={\cal A}
\begin{pmatrix}
u\\ \pl_t u\\
\end{pmatrix},\ \ 
\left.\begin{pmatrix}
u\\ \pl_t u\\
\end{pmatrix}\right|_{t=0}
=\begin{pmatrix}
u_0\\ u_1
\end{pmatrix}
\in D(\sqrt A)\times H.
\end{equation}
Since we are considering the damped wave equation, we have the following proposition.
\begin{proposition}
${\cal A}$ is a m-dissipative operator.
\end{proposition}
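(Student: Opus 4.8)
The plan is to invoke the Lumer--Phillips theorem: since the claim is that $\mathcal A$ is \emph{m-dissipative}, it suffices to check that $\mathcal A$ is densely defined and dissipative and that $\mathrm{Id}-\mathcal A$ (equivalently $\lambda-\mathcal A$ for one $\lambda>0$) maps $D(\mathcal A)$ onto all of $\mathcal H_0$; m-dissipativity then follows, together with the fact that $\mathcal A$ is closed and generates a contraction $C_0$-semigroup (which is what legitimizes talking about ``the solution'' to $(\ref{Cauchy})$). Density of $D(\mathcal A)$ is immediate because it contains $D(A)\times D(\sqrt A)$, which is dense in $H_1\times H$.

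For dissipativity, take $U={}^t(u,v)\in D(\mathcal A)$, so $\mathcal A U={}^t(v,-Au-Bv)$, and compute in the energy inner product $\bracket{(u_1,v_1)}{(u_2,v_2)}_{\mathcal H_0}=\bracket{\sqrt A u_1}{\sqrt A u_2}_H+\bracket{v_1}{v_2}_H$. Since $u\in D(A)\subset D(\sqrt A)$ and $v\in D(\sqrt A)$, self-adjointness of $\sqrt A$ gives $\bracket{-Au}{v}_H=-\bracket{\sqrt A u}{\sqrt A v}_H$, hence
\[
\bracket{\mathcal A U}{U}_{\mathcal H_0}=\bracket{\sqrt A v}{\sqrt A u}_H-\bracket{\sqrt A u}{\sqrt A v}_H-\bracket{Bv}{v}_H .
\]
The first two terms are complex conjugates of one another, so their sum is purely imaginary; taking real parts yields $\mathrm{Re}\,\bracket{\mathcal A U}{U}_{\mathcal H_0}=-\bracket{Bv}{v}_H\le 0$ because $B\ge 0$. (This is exactly the energy identity for the damped wave equation.)

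For the range condition I would solve $(\mathrm{Id}-\mathcal A)\,{}^t(u,v)={}^t(f,g)$ with ${}^t(f,g)\in\mathcal H_0$ arbitrary. The first component forces $v=u-f$, and substituting into the second reduces the system to the single elliptic equation $(A+\mathrm{Id}+B)u=f+g+Bf$. Here $A+\mathrm{Id}+B$ is a bounded self-adjoint perturbation of the self-adjoint operator $A$, hence self-adjoint on $D(A)$, and $A\ge 0$, $B\ge 0$ give $A+\mathrm{Id}+B\ge\mathrm{Id}$, so it is boundedly invertible; equivalently, one applies Lax--Milgram to the coercive sesquilinear form $\bracket{\sqrt A u}{\sqrt A w}_H+\bracket{(\mathrm{Id}+B)u}{w}_H$ on $H_1$. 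This produces $u$ with $Au\in H$ and then $v=u-f\in D(\sqrt A)$, so ${}^t(u,v)\in D(\mathcal A)$, and unwinding the substitution shows $(\mathrm{Id}-\mathcal A)\,{}^t(u,v)={}^t(f,g)$.

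The step needing the most care is this last one: one must keep track of the triple $H_1\subset H\subset H^{-1}$ and of the distinction between $H_1$ and $D(\sqrt A)$, interpreting $Au$ and the equation $(A+\mathrm{Id}+B)u=f+g+Bf$ a priori in $H^{-1}$, and then verifying that the solution actually satisfies $Au\in H$ so that it genuinely lies in $D(\mathcal A)$ — this is where one uses that the right-hand side lies in $H$ and that $B$ maps $H$ into $H$. Everything else is a routine application of Lumer--Phillips.
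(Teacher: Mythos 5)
Your dissipativity computation is exactly the paper's: the cross terms $(\sqrt A v,\sqrt A u)_H-(\sqrt A u,\sqrt A v)_H$ are conjugates, so the real part reduces to $-(Bv,v)_H\le 0$. Where you genuinely diverge is that the paper's proof of this proposition stops at closedness plus dissipativity and never verifies the range condition there (maximality is only recovered implicitly from the resolvent construction of Section 3 together with Lemma 3.2); you supply it directly via Lumer--Phillips and a coercivity/Lax--Milgram argument, which is the more self-contained route and is the right idea.

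There is, however, one concrete flaw in your range argument. You reduce to $(A+\mathrm{Id}+B)u=f+g+Bf$, which presupposes $f\in H$ so that $Bf$ is defined. But $f$ is only an element of $H_1$, the completion of $D(A)$ in the homogeneous norm $\|\sqrt A\cdot\|_H$, and the paper explicitly warns that ${\cal H}_0$ is ``exotic'': when $0\in\sigma(A)$ (the case of interest, cf.\ Proposition \ref{optimal}) one has $D(\sqrt A)\subsetneq H_1$ and $H_1\not\subset H$, so $Bf$ need not make sense; for the same reason your triple ``$H_1\subset H\subset H^{-1}$'' is not the correct picture here. The fix is to take $v=u-f$ as the unknown rather than $u$: the system becomes $(A+\mathrm{Id}+B)v=g-Af$ with $Af\in H^{-1}$ (using the extension $A:H_1\to H^{-1}$ the paper sets up in Lemma \ref{lem3.4}), and the sesquilinear form $(\sqrt A v,\sqrt A w)_H+((\mathrm{Id}+B)v,w)_H$ is bounded and coercive on $D(\sqrt A)$ equipped with the graph norm of $\sqrt A$, while $w\mapsto (g,w)_H-(\sqrt A f,\sqrt A w)_H$ is a bounded functional there. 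Lax--Milgram then gives $v\in D(\sqrt A)$, one sets $u=v+f\in H_1$, and the equation shows $Au=g-(\mathrm{Id}+B)v\in H$, so ${}^t(u,v)\in D({\cal A})$. With that correction your argument is complete and proves more than the paper's displayed proof does.
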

\begin{proof}
From the definition, we easily prove $\cal A$ is closed. For $(u,v)\in D(A)$, we compute as follows
\begin{align*}
{\rm Re }({\cal A}{}^t(u,v), {}^t(u,v))_{{\cal H}_0}
={\rm Re }\{(A^{1/2}v,A^{1/2}u)_H-(Au+Bv,v)_H\}=-(Bv,v)_H\leq0.
\end{align*}
Thus ${\cal A}$ is dissipative.
\end{proof}
From this proposition, we have the following resolvent estimate for ${\rm Re} \lambda>0$
$$\|(\lambda-{\cal A})^{-1}\|_{{\cal H}_0\to {\cal H}_0}\leq\frac{1}{{\rm Re} \lambda}. $$
So by Hille-Yosida Theorem, $\cal A$ generates a contraction semi-group
$$U(t)=e^{t{\cal A}}: {{\cal H}_0}\rightarrow  {{\cal H}_0}.$$
To an initial date ${}^t(u_0,u_1)\in {\cal H}_0$, we write the solution as follows
$${}^t(u(t),v(t))=U(t){}^t(u_0,u_1).$$  
We restrict the initial date since the energy space ${\cal H}_0$ is a exotic space. For ${}^t(u_0,u_1)\in D(A) \times D(\sqrt A)\subset D({\cal A})$, since $U(t){}^t(u_0,u_1)$ is $C^1$, we have the following identity in $H^1$
$$u(t)=u_0+\int_0^tv(s)ds.$$
By the density argument, the above identity also holds for ${}^t(u_0,u_1)\in D(\sqrt A) \times H$. From the right hand side of the above identity.,we can regard $u(t)\in H$. So we introduce the space ${\cal H}= D(\sqrt A)\times H$ by the norm 
$$\|(u, v)\|_{{\cal H}}=\{\|u\|^2_{H}+\|u\|^2_{H_1}+\|v\|^2_H\}^{1/2}.$$
Since $U(t)$ is a contraction operator, from the above identity, we have the following bound for the propagator
\begin{equation}\label{propa}
\|U(t)\|_{{\cal H}\to {\cal H}}\leq C(1+t).
\end{equation}
In this paper, we call $u(t)\in D(\sqrt A)$ for ${}^t(u(t),v(t))=U(t){}^t(u_0, u_1)$, $(u_0, u_1)\in {\cal H}$ as the solution to (\ref{Cauchy}).

\end{section}
\begin{section}{Resolvent estimate}
We prove some related estimates to the resolvent of ${\cal A}$. Probably these estimates are well-known, c.f. \cite{Bu-Hi}, but for completeness, we give the proof. 
First we study the following operator which is almost similar to the resolvent
\begin{equation}
R(\lambda)=(\lambda^2+B\lambda +A)^{-1}.
\end{equation}
From the positivity of $B$, we have the following estimate.
\begin{lemma}\label{lem3.1}
There exists $\delta>0$ such that for ${\rm Re}\lambda>-\delta$ and ${\rm Im}\lambda\not=0$, $R(\lambda): H\rightarrow D(A)$ exists and the following bound holds
\begin{equation}\label{eq2}
\|R(\lambda)\|_{ H\to H} ={\cal O}(|{\rm Im}\lambda|^{-1}).
\end{equation}
\end{lemma}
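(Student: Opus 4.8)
The plan is to show that the operator $R(\lambda) = (\lambda^2 + B\lambda + A)^{-1}$ is invertible with the claimed bound by a direct energy/quadratic-form argument on the equation $(\lambda^2 + B\lambda + A)u = f$. First I would fix $\lambda = \sigma + i\tau$ with $\sigma$ small and $\tau \neq 0$, suppose $u \in D(A)$ solves $(\lambda^2 + B\lambda + A)u = f$, and pair the equation with $u$ in $H$ to get the scalar identity
\begin{equation*}
\lambda^2 \|u\|_H^2 + \lambda (Bu,u)_H + \|\sqrt{A}\,u\|_H^2 = (f,u)_H.
\end{equation*}
Taking real and imaginary parts gives two real equations. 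The imaginary part reads $2\sigma\tau \|u\|_H^2 + \tau (Bu,u)_H = \mathrm{Im}(f,u)_H$, which for $\sigma$ small enough (so that $2\sigma + c > 0$, using $B \geq c\,\mathrm{Id}$) yields $|\tau|\, (2\sigma + c)\|u\|_H^2 \leq |(f,u)_H| \leq \|f\|_H \|u\|_H$, hence $\|u\|_H \leq C|\tau|^{-1}\|f\|_H$. This is exactly the desired bound $\|R(\lambda)\|_{H\to H} = \mathcal{O}(|\mathrm{Im}\lambda|^{-1})$, once existence is established.

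For existence of $R(\lambda)$ as a bounded operator $H \to D(A)$, I would argue as follows. For $\mathrm{Re}\,\lambda > 0$ the invertibility is classical (it follows from the m-dissipativity of $\mathcal A$ already recorded in the excerpt: $\lambda^2 + B\lambda + A$ is, up to the standard block computation, the Schur complement governing $(\lambda - \mathcal A)^{-1}$, which exists for $\mathrm{Re}\,\lambda > 0$). To push the region to $\mathrm{Re}\,\lambda > -\delta$ with $\mathrm{Im}\,\lambda \neq 0$, I would use that $\lambda^2 + B\lambda + A = (\mu^2 + B\mu + A)\bigl(\mathrm{Id} + (\mu^2 + B\mu + A)^{-1}((\lambda^2 - \mu^2) + B(\lambda - \mu))\bigr)$ for a reference point $\mu$ with $\mathrm{Re}\,\mu > 0$ and $\mathrm{Im}\,\mu = \mathrm{Im}\,\lambda$; since the a priori estimate from the previous paragraph controls $(\mu^2+B\mu+A)^{-1}$ on $H$ by $C|\mathrm{Im}\,\lambda|^{-1}$ and $B$ is bounded, choosing $\mu$ close to $\lambda$ (distance $O(\delta)$) makes the Neumann perturbation term small in operator norm on $H$, so the correction factor is invertible. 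Then $R(\lambda)$ exists on $H$, maps into $D(A)$ because $A u = f - (\lambda^2 + B\lambda)u \in H$, and inherits the bound from the a priori estimate, which is uniform in such $\mu, \lambda$.

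The main obstacle I expect is making the analytic-continuation step genuinely uniform: the a priori $H$-bound degenerates like $|\mathrm{Im}\,\lambda|^{-1}$ as $\mathrm{Im}\,\lambda \to 0$, so the radius $\delta$ of the strip into which one can continue, and in which the Neumann series converges, could shrink near the real axis. I would handle this by keeping $\mathrm{Im}\,\mu = \mathrm{Im}\,\lambda$ fixed and only moving $\mathrm{Re}$, so that the perturbation term is $(\lambda^2 - \mu^2) + B(\lambda - \mu)$ with $|\lambda - \mu| \leq \delta$ and $|\lambda + \mu| \leq C(1 + |\mathrm{Im}\,\lambda|)$, giving a perturbation of operator norm $\lesssim \delta(1 + |\mathrm{Im}\,\lambda|) \cdot |\mathrm{Im}\,\lambda|^{-1}$, which is $\lesssim \delta$ uniformly once $|\mathrm{Im}\,\lambda|$ is bounded below, and handled separately (by the classical region plus a compactness/limiting argument, or by noting $R(\lambda)$ is just the resolvent there) when $|\mathrm{Im}\,\lambda|$ is large. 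A secondary point is checking that the continuity hypotheses on $B^{1/2}, B^{-1/2}$ are not needed here — only $B$ bounded and $B \geq c\,\mathrm{Id}$ enter — so this lemma is genuinely at the $H$-level and the finer mapping properties are deferred to later estimates.
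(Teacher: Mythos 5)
Your a priori estimate is exactly the paper's: pairing with $u$, taking the imaginary part, and using $B\geq c\,\mathrm{Id}$ gives $|\mathrm{Im}\lambda|(c+2\mathrm{Re}\lambda)\|u\|_H^2\leq\|f\|_H\|u\|_H$ for $\mathrm{Re}\lambda>-c/2$, which is both the injectivity and the bound \eqref{eq2}. The divergence is in how surjectivity is obtained, and there your argument has a genuine gap. Your one-step Neumann perturbation from a reference point $\mu$ with $\mathrm{Re}\,\mu>0$ produces, as you yourself compute, a perturbation of operator norm $\lesssim\delta(1+|\mathrm{Im}\lambda|)|\mathrm{Im}\lambda|^{-1}$. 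Your proposed resolution only disposes of the cases ``$|\mathrm{Im}\lambda|$ bounded below'' and ``$|\mathrm{Im}\lambda|$ large''; it leaves uncovered the regime $-\delta<\mathrm{Re}\lambda\leq 0$ with $|\mathrm{Im}\lambda|$ small and nonzero, where the perturbation norm $\sim\delta/|\mathrm{Im}\lambda|$ blows up for any fixed $\delta$. That is precisely the region the paper needs most (the contour deformations near the origin in Sections 4--6 cross the imaginary axis arbitrarily close to $0$), so as written the existence claim is not established on the full set $\{\mathrm{Re}\lambda>-\delta,\ \mathrm{Im}\lambda\neq 0\}$ for a single $\delta$.

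The paper closes this in one stroke by duality: the adjoint of $\lambda^2+B\lambda+A$ on $D(A)$ is $\overline{\lambda}^{\,2}+B\overline{\lambda}+A$ (since $B$ is bounded self-adjoint and $A=A^*$), and the same imaginary-part coercivity applies to it verbatim because $|\mathrm{Im}\,\overline{\lambda}|=|\mathrm{Im}\lambda|$ and $\mathrm{Re}\,\overline{\lambda}=\mathrm{Re}\lambda$. Injectivity of the adjoint gives dense range; the a priori lower bound plus closedness of $\lambda^2+B\lambda+A$ (closed operator plus bounded) gives closed range; hence surjectivity everywhere in the strip at once. If you want to keep your continuation route, you must upgrade it to a genuine open--closed (connectedness) argument: continue along a horizontal segment at fixed height $\mathrm{Im}\lambda=\tau$ in finitely many Neumann steps of size $O(|\tau|)$ each. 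The number of steps then depends on $\tau$, but that is harmless because the final norm bound comes from the a priori estimate, not from summing the Neumann series. Your closing observation is correct: only $\|B\|<\infty$ and $B\geq c\,\mathrm{Id}$ enter here, and the continuity of $B^{\pm 1/2}$ on $D(A)$ plays no role in this lemma.
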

\begin{proof} For $u\in D(A)$, we have
\begin{align}\label{eq1}
((\lambda^2+B\lambda +A) u, u)_H&=((|{\rm Re}\lambda|^2-|{\rm Im}\lambda|^2+B{\rm Re}\lambda + A)u,u)_H+i{\rm Im}\lambda((2{\rm Re}\lambda+B)u,u)_H.
\end{align}
Taking imaginary part of the above identity and using $B\geq cId$ we have the following bound
\begin{align*}
|((\lambda^2+B\lambda +A) u, u)_H|&\geq|{\rm Im}\lambda((2{\rm Re}\lambda+B)u,u)_H|\\
&\geq|{\rm Im}\lambda|(c+2{\rm Re}\lambda)(u,u)_H
\end{align*}
for ${\rm Re}\lambda>-c/2$. So $(\lambda^2+B\lambda +A)$ is injective if ${\rm Re}\lambda$ is sufficiently small and ${\rm Im}\lambda\not=0$. We can also prove $(\lambda^2+B\lambda +A)$ is surjective by considering adjoint operator $(\overline{\lambda^2}+B\overline{\lambda} +A)$ and applying similar argument. So the inverse exists and we have the bound.
\end{proof}
Taking real part of the identity (\ref{eq1}) and applying similar argument, we can also prove the following lemma.
\begin{lemma}\label{lem3.2}
If ${\rm Re}\lambda>|{\rm Im}\lambda|$, then $R(\lambda)$ exists and the following bound holds
\begin{equation}\label{eq3}
\|R(\lambda)\|_{H\to H} ={\cal O}(|{\rm Re}\lambda|^{-1}).
\end{equation}
\end{lemma}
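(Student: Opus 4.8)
The plan is to mimic the proof of Lemma~\ref{lem3.1} but take the real part of the quadratic form identity~(\ref{eq1}) rather than the imaginary part. For $u\in D(A)$, the real part of $((\lambda^2+B\lambda+A)u,u)_H$ equals $((|{\rm Re}\lambda|^2-|{\rm Im}\lambda|^2)u,u)_H + ({\rm Re}\lambda\, Bu,u)_H + (Au,u)_H$. Since $A>0$ we have $(Au,u)_H\ge 0$, and since $B\ge cId$ with ${\rm Re}\lambda>0$ we have $({\rm Re}\lambda\,Bu,u)_H\ge 0$; both of these terms only help. The remaining scalar term $(|{\rm Re}\lambda|^2-|{\rm Im}\lambda|^2)\|u\|_H^2$ is nonnegative precisely when ${\rm Re}\lambda\ge|{\rm Im}\lambda|$, which is exactly the hypothesis of the lemma. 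Hence in that region
\[
|((\lambda^2+B\lambda+A)u,u)_H|\ge {\rm Re}((\lambda^2+B\lambda+A)u,u)_H\ge ({\rm Re}\lambda)\,(Bu,u)_H\ge c\,{\rm Re}\lambda\,\|u\|_H^2,
\]
and by Cauchy--Schwarz $\|(\lambda^2+B\lambda+A)u\|_H\ge c\,{\rm Re}\lambda\,\|u\|_H$. Actually, to match the stated $|{\rm Re}\lambda|^{-1}$ bound I would instead keep the full scalar part: $|((\lambda^2+B\lambda+A)u,u)_H|\ge (|{\rm Re}\lambda|^2-|{\rm Im}\lambda|^2+c\,{\rm Re}\lambda)\|u\|_H^2\ge c\,{\rm Re}\lambda\,\|u\|_H^2$, which gives the same conclusion; if a sharper constant is wanted one can observe $|{\rm Re}\lambda|^2-|{\rm Im}\lambda|^2\ge 0$ already suffices together with the $A$ term dropped. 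Either way this yields injectivity of $\lambda^2+B\lambda+A$ on $D(A)$ together with the quantitative lower bound $\|(\lambda^2+B\lambda+A)u\|_H\ge c\,{\rm Re}\lambda\,\|u\|_H$ throughout the sector ${\rm Re}\lambda>|{\rm Im}\lambda|$.

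The second step is to upgrade injectivity plus the a priori bound to actual existence of $R(\lambda)$ as a bounded operator $H\to D(A)$. As in Lemma~\ref{lem3.1}, surjectivity follows by applying the same quadratic-form estimate to the formal adjoint $\lambda^{*2}+B\lambda^*+A = \overline{\lambda}^2+B\overline{\lambda}+A$: since $\overline\lambda$ lies in the same sector ${\rm Re}\overline\lambda>|{\rm Im}\overline\lambda|$ whenever $\lambda$ does, the adjoint is also injective with a uniform lower bound, so its range is dense; combined with the closed-range consequence of the lower bound on $\lambda^2+B\lambda+A$ itself, one concludes $\lambda^2+B\lambda+A$ is bijective from $D(A)$ onto $H$. (One should note that $\lambda^2+B\lambda+A$ is closed on $D(A)$ because $A$ is closed and $B$ is bounded; this is what makes the closed-range / open-mapping argument go through.) The norm bound~(\ref{eq3}) is then immediate from $\|R(\lambda)f\|_H\le (c\,{\rm Re}\lambda)^{-1}\|f\|_H = {\cal O}(|{\rm Re}\lambda|^{-1})\|f\|_H$, taking $u=R(\lambda)f$ in the a priori estimate.

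I do not expect any serious obstacle here — the lemma is essentially a one-line variant of Lemma~\ref{lem3.1}, substituting ``real part'' for ``imaginary part'' in~(\ref{eq1}). The only mildly delicate point is being careful that the scalar coefficient $|{\rm Re}\lambda|^2-|{\rm Im}\lambda|^2$ does not need to be bounded below by a positive quantity (it can be zero on the boundary ${\rm Re}\lambda=|{\rm Im}\lambda|$); what saves the argument is that the damping term contributes $c\,{\rm Re}\lambda\,\|u\|_H^2>0$ strictly, so the lower bound survives on the closed sector minus the origin. The other routine check is the adjoint/surjectivity step, which is genuinely identical to the one already written out in the proof of Lemma~\ref{lem3.1} and can simply be cited as ``applying the similar argument.''
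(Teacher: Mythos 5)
Your proposal is correct and follows exactly the route the paper intends: the paper's entire proof of Lemma~\ref{lem3.2} is the remark that one takes the real part of the identity~(\ref{eq1}) and repeats the argument of Lemma~\ref{lem3.1}, which is precisely what you carry out (nonnegativity of the $A$ and scalar terms in the sector, the lower bound $c\,{\rm Re}\lambda\,\|u\|_H^2$ from the damping term, and surjectivity via the adjoint $\overline{\lambda}^2+B\overline{\lambda}+A$). No gaps; your write-up simply makes explicit the details the paper leaves to the reader.
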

\begin{remark}
From the identity $(\lambda^2+B\lambda +A)^*=\overline{\lambda^2}+B\overline{\lambda} +A$, we have $R(\lambda)^*=R(\overline{\lambda}).$ So if $R(\lambda)$ exists then $R(\overline{\lambda})$ exists.
\end{remark}
Next we estimate $R(\lambda)$ as an operator between different Hilbert spaces.
\begin{lemma}\label{lem3.3}
If $R(\lambda)$ exists, then the following bound holds
\begin{equation}\label{eq4}
\|R(\lambda)\|_{ H\to H_1}={\cal O}(\|R(\lambda)\|_{H\to H}^{1/2}+(|\lambda|^{1/2}+|\lambda|)\|R(\lambda)\|_{H\to H}).
\end{equation}
\end{lemma}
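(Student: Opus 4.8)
The plan is to test the defining equation of $R(\lambda)$ against the solution itself and simply read off a bound on the $H_1$-norm (recall $\|u\|_{H_1}^2=(Au,u)_H$). Given $f\in H$, set $u=R(\lambda)f$; by the mapping property in the hypothesis, $u\in D(A)$ and $(\lambda^2+B\lambda+A)u=f$. Pairing this identity with $u$ in $H$ and isolating the $A$-term gives
\begin{equation*}
\|u\|_{H_1}^2=(Au,u)_H=(f,u)_H-\lambda^2(u,u)_H-\lambda(Bu,u)_H .
\end{equation*}

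Next I would take absolute values and apply Cauchy--Schwarz together with the boundedness of $B$, writing $\|B\|$ for its operator norm on $H$:
\begin{equation*}
\|u\|_{H_1}^2\leq \|f\|_H\|u\|_H+\bigl(|\lambda|^2+\|B\|\,|\lambda|\bigr)\|u\|_H^2 .
\end{equation*}
Since $u=R(\lambda)f$ yields $\|u\|_H\leq\|R(\lambda)\|_{H\to H}\|f\|_H$, this becomes
\begin{equation*}
\|u\|_{H_1}^2\leq\Bigl(\|R(\lambda)\|_{H\to H}+\bigl(|\lambda|^2+\|B\|\,|\lambda|\bigr)\|R(\lambda)\|_{H\to H}^2\Bigr)\|f\|_H^2 .
\end{equation*}

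Finally I would take square roots and use the subadditivity $\sqrt{a+b}\leq\sqrt a+\sqrt b$ for $a,b\geq 0$, combined with $\bigl(|\lambda|^2+\|B\|\,|\lambda|\bigr)^{1/2}\leq|\lambda|+\|B\|^{1/2}|\lambda|^{1/2}={\cal O}(|\lambda|^{1/2}+|\lambda|)$; the cross term is precisely where the $|\lambda|^{1/2}$ contribution comes from. Dividing by $\|f\|_H$ and taking the supremum over $f$ gives exactly the claimed bound for $\|R(\lambda)\|_{H\to H_1}$. The argument is elementary; the only point needing a word of care is that $R(\lambda)f\in D(A)$, so that the pairing $(Au,u)_H$ is legitimate, and this is handed to us by the hypothesis that $R(\lambda):H\to D(A)$. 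I do not expect any genuine obstacle.
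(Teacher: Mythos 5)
Your proposal is correct and is essentially identical to the paper's proof: both pair the equation $(\lambda^2+B\lambda+A)R(\lambda)f=f$ with $R(\lambda)f$, isolate $(AR(\lambda)f,R(\lambda)f)_H=\|R(\lambda)f\|_{H_1}^2$, and bound the remaining terms by Cauchy--Schwarz and the boundedness of $B$. The only difference is that you spell out the final square-root and subadditivity step, which the paper leaves implicit in its ${\cal O}$-notation.
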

\begin{proof}
For $u\in H$, we have 
\begin{align*}
\|R(\lambda)u\|^2_{H_1}&=(\sqrt A R(\lambda)u,\sqrt A R(\lambda) u)_H\\
&=(AR(\lambda)u,R(\lambda)u)_H\\
&=((A+\lambda^2+B\lambda)R(\lambda)u,R(\lambda)u)_H-((\lambda^2+B\lambda)R(\lambda)u,R(\lambda)u)_H\\
&=(u,R(\lambda)u)_H-((\lambda^2+B\lambda)R(\lambda)u,R(\lambda)u)_H\\
&={\cal O}(\|R(\lambda)\|_{H\to H}+(|\lambda|+|\lambda|^2)\|R(\lambda)\|_{H\to H}^2)\|u\|_H^2.
\end{align*}
\end{proof}
\begin{lemma}\label{lem3.4}
If $R(\lambda)$ exists, then $R(\lambda)A: D(A)\to H$ can be extend to $H_1\rightarrow H$ and the following bound holds
\begin{equation}\label{eq5}
\|R(\lambda)A\|_{ H_1\to H}={\cal O}(\|R(\lambda))\|_{H\to H}^{1/2}+(|\lambda|^{1/2}+|\lambda|)\|R({\lambda})\|_{H\to H}).
\end{equation}
\end{lemma}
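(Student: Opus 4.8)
The plan is to get the bound by duality, turning the estimate for $R(\lambda)A$ from $H_1$ to $H$ into the estimate for $R(\bar\lambda)$ from $H$ to $H_1$ supplied by Lemma \ref{lem3.3}. (A direct computation in the spirit of the proof of Lemma \ref{lem3.3} --- writing $R(\lambda)A=Id-\lambda^2R(\lambda)-\lambda R(\lambda)B$ on $D(A)$ --- only produces a bound by $\|u\|_H$, which is the wrong norm; one genuinely needs to exploit the $H_1$ structure, and duality is the clean way to do it.)

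First I would recall, from the Remark following Lemma \ref{lem3.2}, that $R(\bar\lambda)$ exists whenever $R(\lambda)$ does, that $R(\bar\lambda)=R(\lambda)^*$, and hence that $\|R(\bar\lambda)\|_{H\to H}=\|R(\lambda)\|_{H\to H}$; moreover $R(\bar\lambda):H\to D(A)$. Then, for $u\in D(A)$ and $\phi\in H$, I would compute
\begin{align*}
(R(\lambda)Au,\phi)_H=(Au,R(\lambda)^*\phi)_H=(Au,R(\bar\lambda)\phi)_H=(\sqrt A\,u,\sqrt A\,R(\bar\lambda)\phi)_H,
\end{align*}
the last step using $A=\sqrt A\,\sqrt A$ with $\sqrt A$ self-adjoint together with the fact that both $u\in D(A)$ and $R(\bar\lambda)\phi\in D(A)$ lie in $D(\sqrt A)$. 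This yields
$$|(R(\lambda)Au,\phi)_H|\leq\|\sqrt A\,u\|_H\,\|\sqrt A\,R(\bar\lambda)\phi\|_H=\|u\|_{H_1}\,\|R(\bar\lambda)\phi\|_{H_1}\leq\|R(\bar\lambda)\|_{H\to H_1}\,\|u\|_{H_1}\,\|\phi\|_H,$$
and taking the supremum over $\phi\in H$ with $\|\phi\|_H\le1$ gives $\|R(\lambda)Au\|_H\leq\|R(\bar\lambda)\|_{H\to H_1}\,\|u\|_{H_1}$ for every $u\in D(A)$.

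It remains to plug in Lemma \ref{lem3.3}, applied with $\lambda$ replaced by $\bar\lambda$: using $|\bar\lambda|=|\lambda|$ and $\|R(\bar\lambda)\|_{H\to H}=\|R(\lambda)\|_{H\to H}$, this turns the factor $\|R(\bar\lambda)\|_{H\to H_1}$ into exactly the right-hand side of (\ref{eq5}). Finally, since $H_1$ is by definition the completion of $D(A)$ in $\|\cdot\|_{H_1}$ and $D(A)$ is dense in it, the operator $R(\lambda)A$, which we have just shown to be bounded from $(D(A),\|\cdot\|_{H_1})$ into $H$, extends uniquely to a bounded operator $H_1\to H$ with the same norm, proving the claim.

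The argument is essentially bookkeeping, so I do not expect a real obstacle; the one place to be careful is the identity $(Au,R(\bar\lambda)\phi)_H=(\sqrt A\,u,\sqrt A\,R(\bar\lambda)\phi)_H$, where one must check that both entries actually belong to $D(\sqrt A)$ --- this is why the identity is established first for $u\in D(A)$ and only afterwards promoted to $H_1$ by density --- and to keep track that it is $R(\bar\lambda)$, not $R(\lambda)$, that appears on the dual side.
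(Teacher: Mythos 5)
Your proof is correct and follows essentially the same route as the paper: both arguments are duality arguments resting on $R(\lambda)^*=R(\overline\lambda)$ together with the $H\to H_1$ bound of Lemma \ref{lem3.3} applied at $\overline\lambda$. The only difference is presentational --- the paper phrases it as composing $A:H_1\to H^{-1}$ with $R(\lambda):H^{-1}\to H$, whereas you unwind the same pairing explicitly via $(R(\lambda)Au,\phi)_H=(\sqrt A\,u,\sqrt A\,R(\overline\lambda)\phi)_H$ and then conclude by density.
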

\begin{proof}
By $R(\lambda)^*=R(\overline{\lambda})$ and $\rm{Lemma\ \ref{lem3.3}}$, $R(\overline \lambda)$ is a bounded operator from $H$ to $H_1$. Since $R(\lambda)^*=R(\overline{\lambda})$, we can regard $R(\lambda)$ as an operator from $H^{-1}$ to $H$ by duality argument. From $\rm{Lemma\  \ref{lem3.3}}$, we have the following estimate
$$R(\lambda): H^{-1}\rightarrow H={\cal O}(\|R({\lambda})\|_{H\to H}^{1/2}+(|\lambda|^{1/2}+|\lambda|)\|R({\lambda})\|_{H\to H}).$$
We extend $A$ to the operator form $H_1$ to $H^{-1}$ by using the following identity  
$$\langle Au,v\rangle_{H^{-1},H_1}=(\sqrt A u, \sqrt A v)_H.$$
From the definition, $A$ satisfies
$$A: H_1\rightarrow H^{-1}={\cal O}(1).$$
Thus we obtain the lemma. 
\end{proof}

\begin{lemma}\label{lem3.5}
If $R(\lambda)$ exists for a $\lambda\not=0$ then $R(\lambda)(\lambda+B): D(A)\to H_1$ can be extend to $H_1\to H_1$  and the following bound holds
\begin{equation}\label{eq6}
\|R(\lambda)(\lambda+B)\|_{H_1\to H_1}={\cal O}(( |\lambda|^{-1/2}+1)\|R({\lambda})\|_{H\to H}^{1/2}+(1+|\lambda|)\|R({\lambda})\|_{H\to H}+|\lambda|^{-1}).
\end{equation}
\end{lemma}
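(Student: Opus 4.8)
The plan is to reduce the desired bound to the estimate for $R(\lambda)A$ obtained in Lemma \ref{lem3.4}, by means of the resolvent identity, and then to upgrade that estimate from $H_1\to H$ to $H_1\to H_1$ through an energy computation of exactly the same shape as in the proofs of Lemmas \ref{lem3.3} and \ref{lem3.4}.

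First I would isolate the algebraic identity behind the statement. For $u\in D(A)$ one has $R(\lambda)(\lambda^2+B\lambda+A)u=u$, i.e. $\lambda^2R(\lambda)u+\lambda R(\lambda)Bu+R(\lambda)Au=u$; solving for $\lambda R(\lambda)Bu$ and dividing by $\lambda\neq0$ gives $R(\lambda)Bu=\lambda^{-1}u-\lambda R(\lambda)u-\lambda^{-1}R(\lambda)Au$, and adding $\lambda R(\lambda)u$ yields
\[
R(\lambda)(\lambda+B)u=\lambda^{-1}\bigl(u-R(\lambda)Au\bigr),\qquad u\in D(A).
\]
All compositions here are legitimate since $B$ maps $D(A)$ into $H$ and $R(\lambda)$ maps $H$ into $D(A)$. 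Consequently it is enough to show that $R(\lambda)A$ (defined on $D(A)$) extends to a bounded operator $H_1\to H_1$ with
\[
\|R(\lambda)A\|_{H_1\to H_1}={\cal O}\bigl(1+(|\lambda|+|\lambda|^{1/2})\|R(\lambda)A\|_{H_1\to H}\bigr):
\]
indeed, inserting Lemma \ref{lem3.4} and multiplying by $|\lambda|^{-1}$, a short bookkeeping of the powers of $|\lambda|$ (using $|\lambda|^{-1}(|\lambda|+|\lambda|^{1/2})=1+|\lambda|^{-1/2}$ and $(|\lambda|+|\lambda|^{1/2})(|\lambda|^{1/2}+|\lambda|)={\cal O}(|\lambda|(1+|\lambda|))$) produces exactly the right-hand side of (\ref{eq6}).

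To prove the displayed bound for $R(\lambda)A$, I would take $u\in D(A)$, set $z=R(\lambda)Au\in D(A)$, and use $(\lambda^2+B\lambda+A)z=Au$, hence $Az=Au-(\lambda^2+B\lambda)z$, to compute
\begin{align*}
\|z\|_{H_1}^2=(Az,z)_H&=(\sqrt Au,\sqrt Az)_H-\bigl((\lambda^2+B\lambda)z,z\bigr)_H\\
&\le\|u\|_{H_1}\|z\|_{H_1}+C(|\lambda|^2+|\lambda|)\|z\|_H^2,
\end{align*}
where boundedness of $B$ is used. Substituting $\|z\|_H\le\|R(\lambda)A\|_{H_1\to H}\|u\|_{H_1}$ and solving the resulting quadratic inequality for $\|z\|_{H_1}$ gives the bound. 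Since $D(A)$ is dense in $H_1$ and the right-hand side $\lambda^{-1}(u-R(\lambda)Au)$ of the identity above is then bounded in $H_1$, $R(\lambda)(\lambda+B)$ extends to a bounded operator on $H_1$ and the estimate (\ref{eq6}) follows.

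I expect the only slightly delicate point to be the bootstrap step, where one must absorb the term $\bigl((\lambda^2+B\lambda)z,z\bigr)_H$ using \emph{only} the $H_1\to H$ bound for $R(\lambda)A$ (and not a putative $H_1\to H_1$ bound), so that the argument is not circular; after that the whole proof consists of energy estimates in the style already established, and the genuine work is purely the power-counting that collapses the exponents coming from Lemma \ref{lem3.4} into those appearing in (\ref{eq6}).
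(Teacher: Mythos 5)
Your proposal is correct and follows essentially the same route as the paper: both rest on the identity $R(\lambda)(\lambda+B)=\lambda^{-1}(\mathrm{Id}-R(\lambda)A)$, both run the energy computation $(Az,z)_H=(Au,z)_{H_1}-((\lambda^2+B\lambda)z,z)_H$ to bound $\|R(\lambda)A\|_{H_1\to H_1}$ in terms of $\|R(\lambda)A\|_{H_1\to H}$, and both then invoke Lemma \ref{lem3.4}, density, and the same power counting. The only difference is cosmetic: the paper absorbs the cross term with a small $\varepsilon$ rather than solving the quadratic inequality.
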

\begin{proof}
We will estimate $R(\lambda)(\lambda+B)$ using the following identity
$$R(\lambda)(\lambda+B)=\frac{1}{\lambda}-\frac{1}{\lambda}R(\lambda)A.$$
For $f\in D(A)$, we consider the following equation
$$R(\lambda)Af=u.$$
This can be written as follows
$$Af=(\lambda^2+B\lambda +A)u.$$
So we have
$$(Au,u)_H=(Af,u)_H-((\lambda^2+B\lambda)u,u)_H.$$
Taking real part, for sufficiently small $\varepsilon>0$, we obtain
$$\|u\|^2_{H_1}\leq C((|\lambda|^2+|\lambda|)\|u\|^2_H +\|f\|^2_{H_1})+\varepsilon\|u\|^2_{H_1}.$$
Thus 
$$\|R(\lambda)Af\|_{H_1}\leq C((|\lambda|^{1/2}_H+|\lambda|)\|R(\lambda)Af\|_{H} +\|f\|_{H_1}).$$
Applying $(\ref{eq5})$, the following bound holds
$$\|R(\lambda)Af\|_{H_1}\leq C((|\lambda|^{1/2}+|\lambda|)\|R({\lambda})\|_{H\to H}^{1/2}+(|\lambda|+|\lambda|^2)\|R({\lambda})\|_{H\to H})+1)\|f\|_{H_1}).$$
By density, we conclude the extension and the estimate.
\end{proof}
By these lemmas, we can give the existence of the resolvent.
\begin{proposition}\label{resolvent(prop)}
If $R(\lambda)$ exists for a $\lambda\not=0$, then the resolvent $(\lambda-{\cal A})^{-1}:\ {\cal H}_0\rightarrow {\cal H}_0$ exists and the following identity holds
\begin{equation}\label{resolvent}
(\lambda-{\cal A})^{-1}=
\begin{pmatrix}
R(\lambda)(\lambda+B) & R(\lambda) \\
-R(\lambda)A& R(\lambda)\lambda
\end{pmatrix}.
\end{equation}
The following estimate also holds 
\begin{equation}
\|(\lambda-{\cal A})^{-1}\|_{{\cal H}_0\to{\cal H}_0}={\cal O}((1+|\lambda|^{-1/2})\|R(\lambda)\|^{1/2}_{H\to H}+(1+|\lambda|)\|R(\lambda)\|_{H\to H})+|\lambda|^{-1}).
\end{equation}
\end{proposition}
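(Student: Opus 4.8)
The plan is to write down the natural candidate for the inverse, namely the block operator
$M(\lambda)$ on the right-hand side of \eqref{resolvent}, to show that it is a bounded operator
on ${\cal H}_0$ with the asserted norm by simply assembling Lemmas \ref{lem3.3}--\ref{lem3.5},
and only then to identify $M(\lambda)$ with $(\lambda-{\cal A})^{-1}$, doing this last step through
a density and closedness argument so that the unbounded operators never need to be manipulated
directly on all of ${\cal H}_0$. The formula itself is obtained by formally solving
$(\lambda-{\cal A}){}^t(u,v)={}^t(f,g)$: the first row gives $v=\lambda u-f$, and substituting into
the second row gives $(\lambda^2+B\lambda+A)u=g+(\lambda+B)f$, hence
$u=R(\lambda)(\lambda+B)f+R(\lambda)g$; feeding in the identity
$R(\lambda)(\lambda+B)=\frac{1}{\lambda}-\frac{1}{\lambda}R(\lambda)A$ from the proof of
Lemma \ref{lem3.5} (licit since $\lambda\neq0$) turns $v=\lambda u-f$ into
$v=-R(\lambda)Af+\lambda R(\lambda)g$. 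This is exactly $M(\lambda){}^t(f,g)$.

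For boundedness on ${\cal H}_0=H_1\times H$, I would read off from the four entries of $M(\lambda)$
that $R(\lambda)(\lambda+B)\colon H_1\to H_1$ is controlled by \eqref{eq6},
$R(\lambda)\colon H\to H_1$ by \eqref{eq4}, $R(\lambda)A\colon H_1\to H$ by \eqref{eq5}, and
$\lambda R(\lambda)\colon H\to H$ trivially by $|\lambda|\,\|R(\lambda)\|_{H\to H}$. Summing these
contributions to $\|u\|_{H_1}$ and $\|v\|_H$, using $\|{}^t(f,g)\|_{{\cal H}_0}^2=\|f\|_{H_1}^2+\|g\|_H^2$
and keeping the dominant powers of $|\lambda|$ and $\|R(\lambda)\|_{H\to H}$, yields precisely the
stated estimate for $\|M(\lambda)\|_{{\cal H}_0\to{\cal H}_0}$.

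To identify $M(\lambda)$ with the resolvent, I would first work on the dense subspace
$D(A)\times D(\sqrt A)\subset{\cal H}_0$, where $R(\lambda)A$ and $R(\lambda)(\lambda+B)$ act classically
(here $B$ maps $D(A)$ into $D(A)$ by the continuity hypotheses on $B^{\pm1/2}$), and check by direct
computation that $M(\lambda){}^t(f,g)\in D({\cal A})$ and $(\lambda-{\cal A})M(\lambda){}^t(f,g)={}^t(f,g)$.
Since $M(\lambda)$ is bounded on ${\cal H}_0$ and ${\cal A}$ is closed, approximating an arbitrary
${}^t(f,g)\in{\cal H}_0$ by elements of $D(A)\times D(\sqrt A)$ then forces
$M(\lambda){}^t(f,g)\in D({\cal A})$ together with $(\lambda-{\cal A})M(\lambda)=Id$ on ${\cal H}_0$.
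Conversely $(\lambda-{\cal A}){}^t(u,v)=0$ gives $v=\lambda u$ and $(\lambda^2+B\lambda+A)u=0$, hence
$u=0$ because $R(\lambda)$ exists; so $\lambda-{\cal A}$ is injective, therefore bijective, and
$M(\lambda)=(\lambda-{\cal A})^{-1}$, the identity $M(\lambda)(\lambda-{\cal A})=Id$ on $D({\cal A})$
following automatically.

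The delicate point, and the one I expect to be the main obstacle, is that $f$ only lies in the
``exotic'' completion $H_1$, so $Af$ and $R(\lambda)Af$ are a priori only $H^{-1}$-objects, and one
must verify that $M(\lambda){}^t(f,g)$ genuinely lands in $D({\cal A})$ --- in particular that
$Au\in H$. This rests on a cancellation of the $H^{-1}$ pieces, e.g.
$\frac{1}{\lambda}\bigl(Af-AR(\lambda)Af\bigr)=(\lambda+B)R(\lambda)Af\in H$, which is transparent on
the dense subspace $D(A)\times D(\sqrt A)$ and is then transported to all of ${\cal H}_0$ by the
closedness of ${\cal A}$; this is precisely why I would postpone the identification until after
boundedness is in hand. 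Everything else is routine bookkeeping with the bounds \eqref{eq4}--\eqref{eq6}.
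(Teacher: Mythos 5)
Your proposal is correct and follows essentially the same route as the paper: the same candidate matrix $M(\lambda)$, boundedness on ${\cal H}_0$ assembled entrywise from Lemmas \ref{lem3.3}--\ref{lem3.5}, and the same commutation cancellation $\lambda\bigl(AR(\lambda)(\lambda+B)-(\lambda+B)R(\lambda)A\bigr)=0$ to verify $(\lambda-{\cal A})M(\lambda)=Id$ on a dense subspace before extending by boundedness and closedness. The only (harmless) deviation is that you obtain the left inverse from injectivity of $\lambda-{\cal A}$ (via $v=\lambda u$, $(\lambda^2+B\lambda+A)u=0$, noting $\lambda\neq0$ forces $u\in H$) rather than the paper's direct computation of $M(\lambda)(\lambda-{\cal A})=Id$ on $D(A)\times D(A)$ followed by density; both are valid.
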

\begin{proof}
From $\rm{Lemma\ \ref{lem3.3}}$, $\rm{Lemma\ \ref{lem3.4}}$ and $\rm{Lemma\ \ref{lem3.5}}$, the right hand side of ($\ref{resolvent}$) can be extend as an bounded operator on ${\cal H}_0$. By the direct computation, if ${}^t(u,v)\in D(A)\times D(A)$ then
$$\begin{pmatrix}
R(\lambda)(\lambda+B) & R(\lambda) \\
-R(\lambda)A& R(\lambda)\lambda
\end{pmatrix}
(\lambda-{\cal A})
\begin{pmatrix}
u \\
v
\end{pmatrix}
=
\begin{pmatrix}
R(\lambda)(\lambda+B) & R(\lambda) \\
-R(\lambda)A& R(\lambda)\lambda
\end{pmatrix}
\begin{pmatrix}
\lambda & -Id \\
A& \lambda+B
\end{pmatrix}
\begin{pmatrix}
u \\
v
\end{pmatrix}
=
\begin{pmatrix}
u \\
v
\end{pmatrix}$$
So by density it is the left inverse. On the other hand for ${}^t(u,v)\in D(A)\times D(\sqrt A)$
$$
\begin{pmatrix}
\lambda & -Id \\
A& \lambda+B
\end{pmatrix}
\begin{pmatrix}
R(\lambda)(\lambda+B) & R(\lambda) \\
-R(\lambda)A& R(\lambda)\lambda
\end{pmatrix}
\begin{pmatrix}
u \\
v
\end{pmatrix}
=\begin{pmatrix}
Id & 0 \\
AR(\lambda)(\lambda+B)-(\lambda+B)R(\lambda)A& Id
\end{pmatrix}
\begin{pmatrix}
u \\
v
\end{pmatrix}$$
We use the following trick, for a $\lambda\not=0$
$$\lambda(AR(\lambda)(\lambda+B)-(\lambda+B)R(\lambda)A)u=(AR(\lambda)(\lambda^2+B\lambda+ A)-(\lambda^2+B\lambda+A)R(\lambda)A)u=0.$$
Thus the density argument, it is also the right inverse. From $\rm{Lemma\ \ref{lem3.3}}$, $\rm{Lemma\ \ref{lem3.4}}$ and $\rm{Lemma\ \ref{lem3.5}}$, we have the bound.
\end{proof}
\begin{remark}
From this proposition, we have the following identities
\begin{align*}
((\lambda-{\cal A})^{-1}\ {}^t(0,u),{}^t(0,v))_{{\cal H}_0}=\lambda(R(\lambda)u,v)_H,\\
((\lambda-{\cal A})^{-1}\ {}^t(0,u),{}^t(v,0))_{{\cal H}_0}=(R(\lambda)u,v)_{H_1}.
\end{align*}
The left hand side is complex analytic since the resolvent is analytic. So the right had side is complex analytic so $R(\lambda): H\to H, H\to H_1$ are complex analytic family of  operators.
\end{remark}
Applying $\rm{Lemma\  \ref{lem3.3}}$ directly to $R(\lambda)(\lambda+B):\ H \rightarrow H_1$, we also have the following estimates to the resolvent. 
\begin{proposition}
If $R(\lambda)$ exists for a $\lambda\not=0$, then we have the following estimate
\begin{equation}
\|(\lambda-{\cal A})^{-1}\|_{{\cal H}\rightarrow {\cal H}_0}={\cal O}((|\lambda|+1)\|R(\lambda)\|_{H\to H}^{1/2}+(|\lambda|^{1/2}+|\lambda|^2)\|R(\lambda)\|_{H\to H}).
\end{equation}
\end{proposition}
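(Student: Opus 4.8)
The plan is to read the estimate directly off the explicit resolvent formula $(\ref{resolvent})$ of Proposition $\ref{resolvent(prop)}$, bounding each of the four matrix entries as an operator between the relevant pair of spaces. For ${}^t(u,v)\in{\cal H}=D(\sqrt A)\times H$, the vector $(\lambda-{\cal A})^{-1}\,{}^t(u,v)$ has first component $R(\lambda)(\lambda+B)u+R(\lambda)v$, to be measured in $H_1$, and second component $-R(\lambda)Au+\lambda R(\lambda)v$, to be measured in $H$. Since $\|{}^t(u,v)\|_{\cal H}^2=\|u\|_H^2+\|u\|_{H_1}^2+\|v\|_H^2$ controls $\|u\|_H$, $\|u\|_{H_1}$ and $\|v\|_H$ separately, while $\|\cdot\|_{{\cal H}_0}^2$ is just the squared $H_1$-norm of the first slot plus the squared $H$-norm of the second, it suffices to bound the four operators $R(\lambda)(\lambda+B):H\to H_1$, $R(\lambda):H\to H_1$, $R(\lambda)A:H_1\to H$, $\lambda R(\lambda):H\to H$ and then add the contributions, pairing each entry with the factor of the ${\cal H}$-norm it actually sees.

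For the first of these, the point indicated just before the statement is that the sharp Lemma $\ref{lem3.5}$ is \emph{not} needed: since $B$ is bounded, $\lambda+B:H\to H$ has norm ${\cal O}(|\lambda|+1)$, so composing with Lemma $\ref{lem3.3}$ (applied to $R(\lambda):H\to H_1$) gives $\|R(\lambda)(\lambda+B)\|_{H\to H_1}={\cal O}\big((|\lambda|+1)(\|R(\lambda)\|_{H\to H}^{1/2}+(|\lambda|^{1/2}+|\lambda|)\|R(\lambda)\|_{H\to H})\big)$. The remaining three are immediate from the earlier work: $R(\lambda):H\to H_1$ is Lemma $\ref{lem3.3}$; $R(\lambda)A:H_1\to H$ is Lemma $\ref{lem3.4}$ (and here one uses $u\in D(\sqrt A)=H_1$, so the extended operator of that lemma applies directly); and $\|\lambda R(\lambda)\|_{H\to H}=|\lambda|\,\|R(\lambda)\|_{H\to H}$ is trivial. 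Multiplying by the corresponding norm factors ($\|u\|_H$ for $R(\lambda)(\lambda+B)u$, $\|u\|_{H_1}$ for $R(\lambda)Au$, $\|v\|_H$ for the two second-column entries) and using $\|u\|_H,\|u\|_{H_1},\|v\|_H\le\|{}^t(u,v)\|_{\cal H}$, one collects everything into a single bound on $\|(\lambda-{\cal A})^{-1}\,{}^t(u,v)\|_{{\cal H}_0}$.

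What then remains is purely the simplification of the resulting polynomial in $|\lambda|$. The only arithmetic facts used are that on the whole half-line $|\lambda|\in(0,\infty)$ the intermediate powers are dominated by the extreme ones — $|\lambda|\le|\lambda|^{1/2}+|\lambda|^2$ and $(|\lambda|+1)(|\lambda|^{1/2}+|\lambda|)=|\lambda|^{1/2}+|\lambda|+|\lambda|^{3/2}+|\lambda|^2={\cal O}(|\lambda|^{1/2}+|\lambda|^2)$ — and that every $\|R(\lambda)\|_{H\to H}^{1/2}$ term is trivially ${\cal O}((|\lambda|+1)\|R(\lambda)\|_{H\to H}^{1/2})$. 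Collecting terms gives precisely the claimed estimate $\|(\lambda-{\cal A})^{-1}\|_{{\cal H}\to{\cal H}_0}={\cal O}\big((|\lambda|+1)\|R(\lambda)\|_{H\to H}^{1/2}+(|\lambda|^{1/2}+|\lambda|^2)\|R(\lambda)\|_{H\to H}\big)$. I do not expect any genuine obstacle here; the substantive content is already in Lemmas $\ref{lem3.3}$ and $\ref{lem3.4}$ together with the resolvent identity, and the only care required is the bookkeeping of which ${\cal H}$-norm component each matrix entry is allowed to consume.
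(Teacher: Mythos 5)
Your argument is correct and is exactly what the paper intends: the paper's entire ``proof'' is the one-line remark that one should apply Lemma \ref{lem3.3} directly to $R(\lambda)(\lambda+B):H\to H_1$ (i.e.\ exploit that the ${\cal H}$-norm controls $\|u\|_H$ so the bounded factor $\lambda+B:H\to H$ can be composed with the $H\to H_1$ bound of $R(\lambda)$, rather than invoking Lemma \ref{lem3.5} with its $|\lambda|^{-1}$ singularity), and the other three entries are handled by Lemmas \ref{lem3.3}, \ref{lem3.4} and the trivial bound on $\lambda R(\lambda)$ exactly as you do. Your bookkeeping of which component of the ${\cal H}$-norm each matrix entry consumes, and the elementary domination $|\lambda|+|\lambda|^{3/2}={\cal O}(|\lambda|^{1/2}+|\lambda|^2)$, are both correct.
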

This proposition says if we restrict the domain of the resolvent to $\cal H$ then the singularity, as $\lambda$ tend to $0$, become weaker one. This estimate is important for energy decay. We also have the following estimate. 
\begin{proposition}
If $R(\lambda)$ exists for a $\lambda\not=0$, then we have the following estimate
\begin{equation}
\|(\lambda-{\cal A})^{-1}\|_{{\cal H}\to {\cal H}}={\cal O}((1+|\lambda|^{-1/2})\|R(\lambda)\|_{H\to H}^{1/2}+(1+|\lambda|)\|R(\lambda)\|_{H\to H}+|\lambda|^{-1}).
\end{equation}
\end{proposition}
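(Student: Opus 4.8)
The plan is to read off the block form of the resolvent from Proposition~\ref{resolvent(prop)} and to exploit the fact that the ${\cal H}$‑norm differs from the ${\cal H}_0$‑norm only by the extra summand $\|u\|_H$. Everything concerning the $H_1$‑component of the first entry and the $H$‑component of the second entry of $(\lambda-{\cal A})^{-1}(u,v)$ is then inherited from the already established ${\cal H}_0\to{\cal H}_0$ bound, and the only genuinely new quantity to control is the $H$‑norm of the first entry, which can be estimated directly and cheaply.

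Concretely, fix $\lambda\neq0$ with $R(\lambda)$ existing, take $(u,v)\in{\cal H}=D(\sqrt A)\times H$ and put $(w_1,w_2)=(\lambda-{\cal A})^{-1}(u,v)$, so that by \eqref{resolvent} we have $w_1=R(\lambda)(\lambda+B)u+R(\lambda)v$ and $w_2=-R(\lambda)Au+\lambda R(\lambda)v$. Since the ${\cal H}_0$‑norm simply drops the $\|u\|_H$ summand, one has $\|(u,v)\|_{{\cal H}_0}\le\|(u,v)\|_{{\cal H}}$, and Proposition~\ref{resolvent(prop)} gives immediately
\begin{equation*}
\|w_1\|_{H_1}+\|w_2\|_H\ \le\ C\big((1+|\lambda|^{-1/2})\|R(\lambda)\|_{H\to H}^{1/2}+(1+|\lambda|)\|R(\lambda)\|_{H\to H}+|\lambda|^{-1}\big)\|(u,v)\|_{{\cal H}}.
\end{equation*}
It remains to bound $\|w_1\|_H$. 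Here the point is that $u\in D(\sqrt A)\subset H$, hence $(\lambda+B)u+v\in H$ with $\|(\lambda+B)u+v\|_H\le(|\lambda|+\|B\|)\|u\|_H+\|v\|_H$; approximating $u$ by elements of $D(A)$ simultaneously in $\|\cdot\|_H$ and in $\|\sqrt A\,\cdot\|_H$ (possible since $D(A)$ is a core for $\sqrt A$) one checks that the extended operator $R(\lambda)(\lambda+B)$ from Proposition~\ref{resolvent(prop)} agrees on $D(\sqrt A)$ with $R(\lambda)$, viewed as a bounded map $H\to D(A)$, applied to $(\lambda+B)u$. Thus $w_1=R(\lambda)\big((\lambda+B)u+v\big)$, so $w_1\in D(A)\subset D(\sqrt A)$ — in particular $(w_1,w_2)\in{\cal H}$ — and
\begin{equation*}
\|w_1\|_H\ \le\ \|R(\lambda)\|_{H\to H}\big((|\lambda|+\|B\|)\|u\|_H+\|v\|_H\big)\ \le\ C(1+|\lambda|)\|R(\lambda)\|_{H\to H}\,\|(u,v)\|_{{\cal H}}.
\end{equation*}
Adding the two displays and noting that the term $(1+|\lambda|)\|R(\lambda)\|_{H\to H}$ is already subsumed in the bound of Proposition~\ref{resolvent(prop)} yields the asserted estimate.

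I do not expect a serious obstacle. The only point requiring a little care is the identification of $R(\lambda)(\lambda+B)u$ — defined in Proposition~\ref{resolvent(prop)} as an $H_1$‑limit of the densely defined expressions $\tfrac1\lambda(1-R(\lambda)A)u$ — with the ``naive'' element $R(\lambda)\big((\lambda+B)u\big)\in D(A)$; this hinges precisely on being able to approximate a vector of $D(\sqrt A)$ by vectors of $D(A)$ in the two relevant norms at once. Everything else reduces to the triangle inequality, the boundedness of $B$ on $H$, and the resolvent estimates already proved in this section.
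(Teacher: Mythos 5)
Your proof is correct, and it follows the route the paper intends: the paper states this proposition without proof, immediately after the ${\cal H}_0\to{\cal H}_0$ estimate, and the only new ingredient is exactly the one you supply, namely the bound $\|R(\lambda)((\lambda+B)u+v)\|_H\leq C(1+|\lambda|)\|R(\lambda)\|_{H\to H}\|(u,v)\|_{\cal H}$ for the $H$-component of the first entry, the rest being inherited from the ${\cal H}_0\to{\cal H}_0$ bound via $\|(u,v)\|_{{\cal H}_0}\leq\|(u,v)\|_{\cal H}$. Your care in identifying the densely-defined extension of $R(\lambda)(\lambda+B)$ with $R(\lambda)\circ(\lambda+B)$ on $D(\sqrt A)$ is appropriate and resolves cleanly, since Lemma \ref{lem3.3} makes $R(\lambda):H\to H_1$ bounded, so both candidate limits of $R(\lambda)(\lambda+B)u_n$ coincide.
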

By $\rm{Lemma\ \ref{lem3.1}}$ and $\rm{Lemma\ \ref{lem3.2}}$, we conclude the following estimates of the resolvent.
\begin{proposition}\label{resolvent estimate}
If $|{\rm Re}\lambda|<c$ for sufficient small $c>0$ and ${\rm Im}\lambda\not=0$, then the resolvent satisfies
\begin{align*}
&\|(\lambda-{\cal A})^{-1}\|_{{\cal H}_0\to {\cal H}_0}={\cal O}(|{\rm Im}\lambda|^{-1}+1),\\
&\|(\lambda-{\cal A})^{-1}\|_{{\cal H}\to {\cal H}_0}={\cal O}((1+|\lambda |)|{\rm Im}\lambda|^{-1/2}+(|\lambda|^{1/2}+|\lambda|^2)|{\rm Im}\lambda|^{-1}),\\
&\|(\lambda-{\cal A})^{-1}\|_{{\cal H}\to{\cal H}}={\cal O}(|{\rm Im} \lambda|^{-1}+1).
\end{align*}
If ${\rm Re}\lambda>0$, the following estimate also holds
\begin{align*}
&\|(\lambda-{\cal A})^{-1}\|_{{\cal H}_0\to {\cal H}_0}={\cal O}(|\lambda|^{-1}+1),\\
&\|(\lambda-{\cal A})^{-1}\|_{{\cal H}\to {\cal H}_0}={\cal O}(1+|\lambda|+|\lambda|^{-1/2}),\\
&\|(\lambda-{\cal A})^{-1}\|_{{\cal H}\to {\cal H}}={\cal O}(|\lambda|^{-1}+1).
\end{align*}

\end{proposition}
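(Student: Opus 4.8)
The plan is to combine the two a priori bounds on $R(\lambda)=(\lambda^2+B\lambda+A)^{-1}$, namely $\rm{Lemma\ \ref{lem3.1}}$ and $\rm{Lemma\ \ref{lem3.2}}$, with the three norm estimates for $(\lambda-{\cal A})^{-1}$ obtained earlier in this section: Proposition $\ref{resolvent(prop)}$ for the ${\cal H}_0\to{\cal H}_0$ norm, and the two Propositions immediately following it for the ${\cal H}\to{\cal H}_0$ and ${\cal H}\to{\cal H}$ norms. Each of those already bounds the relevant operator norm by an explicit expression in $\|R(\lambda)\|_{H\to H}$ and powers of $|\lambda|$ (all resting on the resolvent representation $(\ref{resolvent})$), so once $\|R(\lambda)\|_{H\to H}$ is replaced by the bound supplied by the appropriate lemma, the proof reduces to elementary manipulation of powers of $|\lambda|$, $|{\rm Re}\lambda|$ and $|{\rm Im}\lambda|$.

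First I would treat the strip $|{\rm Re}\lambda|<c$ with ${\rm Im}\lambda\neq0$, choosing the small constant $c$ in the statement no larger than the $\delta$ of $\rm{Lemma\ \ref{lem3.1}}$. Since ${\rm Im}\lambda\neq0$ forces $\lambda\neq0$, $R(\lambda)$ exists and $\|R(\lambda)\|_{H\to H}={\cal O}(|{\rm Im}\lambda|^{-1})$ uniformly on the strip. Substituting this into the three earlier estimates and using the trivial inequalities $|\lambda|\geq|{\rm Im}\lambda|$ (so $|\lambda|^{-s}\leq|{\rm Im}\lambda|^{-s}$), $|\lambda|\leq|{\rm Im}\lambda|+c$ and $|{\rm Im}\lambda|^{-1/2}\leq1+|{\rm Im}\lambda|^{-1}$, the ${\cal H}_0\to{\cal H}_0$ and ${\cal H}\to{\cal H}$ expressions collapse to ${\cal O}(|{\rm Im}\lambda|^{-1}+1)$, while the ${\cal H}\to{\cal H}_0$ expression already has the stated form ${\cal O}((1+|\lambda|)|{\rm Im}\lambda|^{-1/2}+(|\lambda|^{1/2}+|\lambda|^2)|{\rm Im}\lambda|^{-1})$ with no further work.

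For the half-plane ${\rm Re}\lambda>0$ I would argue by cases. If ${\rm Re}\lambda>|{\rm Im}\lambda|$, then $\rm{Lemma\ \ref{lem3.2}}$ gives $\|R(\lambda)\|_{H\to H}={\cal O}(|{\rm Re}\lambda|^{-1})$, which is ${\cal O}(|\lambda|^{-1})$ since $|\lambda|\leq\sqrt2\,{\rm Re}\lambda$ there; if $0<{\rm Re}\lambda\leq|{\rm Im}\lambda|$ then ${\rm Im}\lambda\neq0$ and $\rm{Lemma\ \ref{lem3.1}}$ gives $\|R(\lambda)\|_{H\to H}={\cal O}(|{\rm Im}\lambda|^{-1})={\cal O}(|\lambda|^{-1})$ because $|{\rm Im}\lambda|\leq|\lambda|\leq\sqrt2\,|{\rm Im}\lambda|$. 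Hence $\|R(\lambda)\|_{H\to H}={\cal O}(|\lambda|^{-1})$ on the whole half-plane, with $\lambda\neq0$. Plugging this into the three earlier estimates and using $|\lambda|^{-1/2}\leq1+|\lambda|^{-1}$ and $|\lambda|^{1/2}\leq1+|\lambda|$ to absorb the cross terms yields ${\cal O}(|\lambda|^{-1}+1)$ for ${\cal H}_0\to{\cal H}_0$ and ${\cal H}\to{\cal H}$, and ${\cal O}(1+|\lambda|+|\lambda|^{-1/2})$ for ${\cal H}\to{\cal H}_0$, as claimed.

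I do not expect a genuine obstacle: all the analytic content is already contained in $\rm{Lemma\ \ref{lem3.1}}$, $\rm{Lemma\ \ref{lem3.2}}$ and the representation $(\ref{resolvent})$. The only points needing care are that the smallness of $c$ in the statement is dictated by the $\delta$ of $\rm{Lemma\ \ref{lem3.1}}$; that the case split ${\rm Re}\lambda>|{\rm Im}\lambda|$ versus ${\rm Re}\lambda\leq|{\rm Im}\lambda|$ is precisely what makes the second group of bounds cover the positive real axis, where $\rm{Lemma\ \ref{lem3.1}}$ alone does not apply; and that the constants in $\rm{Lemma\ \ref{lem3.1}}$ and $\rm{Lemma\ \ref{lem3.2}}$ are uniform over the respective regions, so that the final ${\cal O}$'s are uniform as well.
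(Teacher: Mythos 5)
Your proposal is correct and follows exactly the route the paper intends: the paper's entire proof is the single sentence ``By Lemma \ref{lem3.1} and Lemma \ref{lem3.2}, we conclude the following estimates,'' i.e.\ substituting the bounds $\|R(\lambda)\|_{H\to H}={\cal O}(|{\rm Im}\lambda|^{-1})$ and ${\cal O}(|{\rm Re}\lambda|^{-1})$ into the three preceding propositions, which is what you do. Your explicit case split ${\rm Re}\lambda>|{\rm Im}\lambda|$ versus $0<{\rm Re}\lambda\leq|{\rm Im}\lambda|$ and the power-counting details are exactly the bookkeeping the paper leaves implicit.
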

\end{section}
\begin{section}{Abstract energy decay} 
We prove the following  abstract energy decay estimate to the damped wave equation. The theorem may be well known but we prove here for later argument.  
\begin{theorem}
There exists a constant $C$ such that for any ${}^t(u_0,u_1)\in {\cal H}$, and $t>1$, the following bound holds
$$\|U(t){}^t(u_0,u_1)\|_{{\cal H}_0}\leq C(t^{-1/2}\|{}^t(u_0,u_1)\|_{{\cal H}}).$$

\end{theorem}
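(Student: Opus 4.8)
The plan is to use the classical energy--multiplier argument (in the spirit of Ikehata's work) rather than the resolvent machinery of Section 3. Write $U(t)\,{}^t(u_0,u_1)={}^t(u(t),v(t))$ with $v=\partial_t u$, and carry out the identities below first for smooth data ${}^t(u_0,u_1)\in D(A)\times D(\sqrt A)\subset D(\mathcal{A})$, for which $t\mapsto{}^t(u(t),v(t))$ is $C^1$ into $\mathcal{H}_0$; general data in $\mathcal{H}$ is then reached by density, using the propagator bound $(\ref{propa})$ and the contraction property of $U(t)$ on $\mathcal{H}_0$.

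The first ingredient is the energy identity: differentiating $\|{}^t(u,v)\|_{\mathcal{H}_0}^2=\|\sqrt A u\|_H^2+\|v\|_H^2$ and inserting the equation yields $\frac{d}{dt}\|{}^t(u,v)\|_{\mathcal{H}_0}^2=-2(Bv,v)_H\le-2c\|v\|_H^2$; hence $t\mapsto\|{}^t(u(t),v(t))\|_{\mathcal{H}_0}$ is nonincreasing and $\int_0^\infty\|v(s)\|_H^2\,ds\le c^{-1}\|{}^t(u_0,u_1)\|_{\mathcal{H}_0}^2\le c^{-1}\|{}^t(u_0,u_1)\|_{\mathcal{H}}^2$. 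The second ingredient is the multiplier $\Phi(t)=\mathrm{Re}(v(t),u(t))_H+\tfrac12(Bu(t),u(t))_H$: using the equation and $(Au,u)_H=\|\sqrt A u\|_H^2$, one computes $\Phi'(t)=-\|\sqrt A u\|_H^2+\|v\|_H^2$, so $\int_0^t\|\sqrt A u\|_H^2\,ds=\Phi(0)-\Phi(t)+\int_0^t\|v\|_H^2\,ds$. Here $\Phi(0)\lesssim\|{}^t(u_0,u_1)\|_{\mathcal{H}}^2$ and $\Phi(t)\le\Phi(0)+\int_0^\infty\|v\|_H^2\lesssim\|{}^t(u_0,u_1)\|_{\mathcal{H}}^2$, while the coercivity $B\ge c\,Id$ gives $\Phi(t)\ge\tfrac c2\|u(t)\|_H^2-\|v(t)\|_H\|u(t)\|_H$; Young's inequality then produces the uniform bound $\|u(t)\|_H\lesssim\|{}^t(u_0,u_1)\|_{\mathcal{H}}$, whence also $-\Phi(t)\lesssim\|{}^t(u_0,u_1)\|_{\mathcal{H}}^2$ and therefore $\int_0^\infty\|\sqrt A u(s)\|_H^2\,ds\lesssim\|{}^t(u_0,u_1)\|_{\mathcal{H}}^2$.

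Adding the two bounds gives $\int_0^\infty\|{}^t(u(s),v(s))\|_{\mathcal{H}_0}^2\,ds\lesssim\|{}^t(u_0,u_1)\|_{\mathcal{H}}^2$; since $s\mapsto\|{}^t(u(s),v(s))\|_{\mathcal{H}_0}^2$ is nonincreasing, $t\,\|{}^t(u(t),v(t))\|_{\mathcal{H}_0}^2\le\int_0^t\|{}^t(u(s),v(s))\|_{\mathcal{H}_0}^2\,ds\lesssim\|{}^t(u_0,u_1)\|_{\mathcal{H}}^2$, which is the assertion (with room to spare for $t>1$).

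The one genuinely delicate point is the rigorous justification of the two differential identities for data merely in $\mathcal{H}$: then $u(t)$ need not lie in $D(A)$, $\partial_t^2 u=-Au-Bv$ only holds in $H^{-1}$, and the pairings must be interpreted through the $H^{-1}$--$H_1$ duality as in Section 2; I would resolve this by working with the $C^1$ solutions issuing from $D(A)\times D(\sqrt A)$ and passing to the limit. It is worth noting that the uniform bound on $\|u(t)\|_H$ is extracted solely from the coercivity of $B$ in $\Phi$, not from any spectral lower bound on $A$ at $0$ --- this is precisely why the rate is $t^{-1/2}$ rather than exponential, and why the term $\|u_0\|_H$ (and not merely $\|\sqrt A u_0\|_H$) appears on the right-hand side. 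One could alternatively argue from the inverse-Laplace representation and Proposition \ref{resolvent estimate}, deforming the contour onto a half-circle of radius $1/t$ about $\lambda=0$ (where $\|(\lambda-\mathcal{A})^{-1}\|_{\mathcal{H}\to\mathcal{H}_0}=\mathcal{O}(|\lambda|^{-1/2})$) and using the spectral gap of Lemma \ref{lem3.1} away from the origin, but that route requires the more delicate high-frequency contour estimates, so the energy argument is preferable.
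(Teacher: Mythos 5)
Your argument is correct, but it is genuinely different from the one in the paper. You prove the theorem by the classical weighted-energy (multiplier) method: the dissipation identity gives $\int_0^\infty\|v\|_H^2\,ds\lesssim\|f\|_{\mathcal H}^2$, the multiplier $\Phi=\mathrm{Re}(v,u)_H+\tfrac12(Bu,u)_H$ gives $\int_0^\infty\|\sqrt Au\|_H^2\,ds\lesssim\|f\|_{\mathcal H}^2$, and monotonicity of the energy converts the integrated bound into the pointwise rate $t^{-1/2}$; the computations of $\tfrac{d}{dt}\|{}^t(u,v)\|_{\mathcal H_0}^2$ and of $\Phi'$ are exact, the coercivity $B\ge c\,Id$ is used correctly in both places, and the regularization/density step is the right way to justify the identities. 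The paper instead takes the Fourier--Laplace transform of the cut-off propagator, splits into a high-frequency piece handled by Morawetz--Khenissi's $L^2_t$ argument and a low-frequency piece handled by deforming the contour onto a half-circle of radius $1/t$ where Proposition \ref{resolvent estimate} gives $\|(i\lambda-{\cal A})^{-1}\|_{{\cal H}\to{\cal H}_0}={\cal O}(|\lambda|^{-1/2})$ --- exactly the alternative you mention and discard. What your route buys is brevity and elementarity (and in fact the bound for all $t>0$, not just $t>1$); what the paper's route buys is reusability: the same contour decomposition is the engine of the proofs of Theorems \ref{main} and \ref{main 2}, and Remark \ref{ene rem} isolates the resolvent properties actually used so that the argument transfers to Section 7 and the Appendix, where the damping satisfies only the geometric control condition and may vanish on a set --- there your multiplier argument breaks down, since it invokes $B\ge c\,Id$ both to absorb $\int\|v\|_H^2$ and to make $\tfrac12(Bu,u)_H$ coercive in $\Phi$.
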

\begin{remark} 
We give one example which may be useful to understand the proof of this theorem. We consider the constant coefficient damped wave equation on $\R^d$
$$(\pl_t^2 -\Delta +\pl_t )u=0.$$
The Fourier transform of the solution is the following form
$$u_+(\xi){\rm exp}\left(\frac{-1+ \sqrt{1-4|\xi|^2}}{2}t\right)+u_-(\xi)\exp\left(\frac{-1- \sqrt{1-4|\xi|^2}}{2}t\right). $$
For simplicity, we assume $u_+$ and $u_-$ are in $L^2$, then $u_-$ part decays exponentially so it is no interest. If $\xi=0$, $u_+$ part does not decay therefore uniform decay does not occur if we only assume $u_+(\xi) \in L^2$. 
However if we assume $|\xi|u_+(\xi) \in L^2$, we have the uniform decay estimate, by using the following decomposition,
\begin{align*}
&\left \||\xi|u_+(\xi){\rm exp}(\frac{-1+ \sqrt{1-4|\xi|^2}}{2}t)\right\|_{L^2}\\
&\leq\left\|\chi_{\{|\xi|<1/2\}}|\xi|u_+(\xi){\rm exp}(\frac{-1+ \sqrt{1-4|\xi|^2}}{2}t)\right\|_{L^2}+\left\|\chi_{\{|\xi|\geq 1/2\}}|\xi|u_+(\xi){\rm exp}(\frac{-1+ \sqrt{1-4|\xi|^2}}{2}t)\right\|_{L^2}\\
&\leq\left\|\chi_{\{ |\xi|<1/2 \}}|\xi|{\rm exp}(-|\xi|^2t) \right\|_{L^\8} \|u_+(\xi)\|_{L^2}+{\cal O}(e^{-t})\||\xi|u_+(\xi)\|_{L^2}\\
&\leq {\cal O}(t^{-1/2})(\|u_+(\xi)\|_{L^2}+\||\xi|u_+(\xi)\|_{L^2}).
\end{align*}
Here $\chi_{\{ \cdot \}}$ denote the characteristic functions. 
The high frequency part decays exponentially and the low frequency part becomes the main part. 
This type decomposition will be used in the proof of the theorem. We also remark that the above estimate is optimal without any further assumption for $u_+$.  
\end{remark}
We give a proof of the theorem. First we assume the Cauchy data $f={}^t(u_0,u_1)$ is in $D({\cal A}^k) $ for large $k$ and $u_0\in H$.  So $U(t)f$ is in $C^k$ as $t$ variable function. We take a smooth cut-off function $\psi$ satisfying
$$\psi(t)=
\begin{cases}

1, \ t\geq2,\\
0,\ t<1.
\end{cases}
$$
We shall estimate the following cut-off propagator as in $\cite{Bu},$ $\cite{Kh}$, $\cite{Zw}$.
$$V(t)f=\psi(t)U(t)f.$$
From $(\ref{propa})$, we can define its Fourier-Laplace transform by 
$$\widehat{ Vf}(\lambda)=\frac{1}{\sqrt{2\pi}}\int^\8_{-\8} e^{-i\lambda t}V(t)fdt$$ 
if ${\rm Im}\ \lambda=-{\rm Re}\ i \lambda=-\varepsilon<0$.
Then by Fourier inversion formula, the following identity holds 
$$V(t)f=\frac{1}{\sqrt{2\pi}}\int_{{\rm Im}\lambda=-\varepsilon} e^{i\lambda t}{\widehat{Vf}(\lambda)}d\lambda.$$
By definition, we have the identity
$$(\pl_t-{\cal A})V(t)f=\psi'(t)U(t)f.$$
Taking its Fourier-Laplace transform, we get
$$(i\lambda-{\cal A}){\widehat{Vf}(\lambda)}=\widehat{\psi'Uf}(\lambda).$$
if ${\rm Im}\ \lambda=-\varepsilon<0$. Since $\cal A$ is a dissipative operator the resolvent exists for ${\rm Im}\lambda=-\varepsilon<0$ and we obtain 
$$\widehat{ Vf}(\lambda)=(i\lambda-{\cal A})^{-1}\widehat{\psi'Uf}(\lambda).$$
We take its inverse Fourier transform and we get
$${V(t)}f=\frac{1}{\sqrt{2\pi}}\int_{{\rm Im}\lambda=-\varepsilon}(i\lambda-{\cal A})^{-1} e^{i\lambda t}\widehat{\psi'Uf}(\lambda)d\lambda.$$
Since $\psi'(t)$ has a compact support, $\widehat{\psi'Uf}(\lambda)$ is a holomorphic function and by the results of previous section, $(i\lambda-{\cal A})^{-1}$ can be holomorphically extended to ${\rm Re}\  i \lambda=-a<0$ if $a>0$ is a sufficiently small constant and  ${\rm Im}\ i \lambda \not=0$. Since $U(t)f$ is $C^k$ in $D(A)\times H$, $\widehat{\psi'Uf}(\lambda)$ decays sufficiently first as $\lambda$ tend to infinity, so we can change the integral contour as follows
\begin{align*}
{V(t)}f&=\frac{1}{\sqrt{2\pi}}\int_{{\rm Im}\lambda=
a}\chi(\lambda)(i\lambda-{\cal A})^{-1} e^{i\lambda t}\widehat{\psi'Uf}(\lambda)d\lambda+\frac{1}{\sqrt{2\pi}}\int_{\cal C}(i\lambda-{\cal A})^{-1} e^{i\lambda t}\widehat{\psi'Uf}(\lambda)d\lambda\\
&=e^{-at}I_1f(t)+I_2f(t)=\text{(High frequency part) $+$ (low frequency part)}.
\end{align*}
Here $\chi$ is a cut-off function defined by 
$$\chi(\lambda)=
\begin{cases}
1, \ |{\rm Re}\lambda|\geq\delta,\\
0,\ |{\rm Re}\lambda|<\delta,
\end{cases}
$$
for sufficient small $\delta>0$. $\cal C$ which we choose later, is a contour around the origin connecting two points $\pm\delta+i a$ . We prove the following bound of $I_1$ by using the method of $\cite{Kh}$ which is known as Morawetz's argument
\begin{equation}\label{exp decay}
\|e^{-at}I_1f(t)\|_{{\cal H}_0}\leq C e^{(-a+\varepsilon)t}\|f\|_{{\cal H}_0}. 
\end{equation}
First we estimate $L^2$ norm of $I_1f$. Using Plancheral's identity, we have
\begin{align*}
\int^{\8}_{-\8}\|I_1f(\tau)\|_{{\cal H}_0}^2d\tau&=\int^{\8}_{-\8}\left\|\frac{1}{\sqrt{2\pi}}\int_{{\rm Im}\lambda=
a}e^{(i\lambda+a) \tau}\chi(\lambda)(i\lambda-{\cal A})^{-1} \widehat{\psi'Uf}(\lambda)d\lambda\right\|_{{\cal H}_0}^2d\tau\\
&\leq\int^{\8}_{-\8}\|\chi(\lambda+ia)(i\lambda-a-{\cal A})^{-1} \widehat{\psi'Uf}(\lambda+ia)\|_{{\cal H}_0}^2d\lambda.
\end{align*}
By $\rm{proposition\ \ref{resolvent estimate}}$ and $\chi$ cut low frequency, $\chi(\lambda+ia)(i\lambda-a-{\cal A})^{-1}$ are uniformly bounded operators on ${\cal H}_0$ so we estimate
\begin{align*}
\int^{\8}_{-\8}\|I_1f(\tau)\|_{{\cal H}_0}^2d\tau&\leq C\int^{\8}_{-\8}\left\|\widehat{\psi'Uf}(\lambda+ia)\right\|_{{\cal H}_0}^2d\lambda\\
&=C \int^{+\8}_{-\8}e^{2at}\|\psi'(t)U(t)f\|_{{\cal H}_0}^2dt.
\end{align*}
For last identity, we use Plancheral's identity again. Since $\psi'$ is a compact support function, we have the following estimate
\begin{equation}\label{I_1-L^2est}
\int^{\8}_{-\8}\|I_1f(\tau)\|_{{\cal H}_0}^2d\tau\leq C\|f\|^2_{{\cal H}_0}.
\end{equation}
We give point-wise bound of $I_1f(t)$ using this estimate. 
From this inequality, there exist $0<s<2$ such that 
$$\|I_1f(s)\|_{{\cal H}_0}\leq C\|f\|_{{\cal H}_0}.$$
We define
\begin{align*}
\pl_t I_1 f(t)-{\cal A}I_1 f(t)&=\frac{1}{\sqrt{2\pi}}\int_{{\rm Im}\lambda=
a}\chi(\lambda) e^{(i\lambda+a) t}\widehat{\psi'Uf}(\lambda)d\lambda+aI_1f(t)\\
&=I_3f(t)+aI_1f(t).
\end{align*}
By Duhamel's principle, we have
\begin{equation}\label{du}
I_1f(t)=U(t-s)I_1f(s)+\int_{s}^tU(t-\tau)(I_3f(\tau)+aI_1f(\tau))d\tau.
\end{equation}
We shall estimate the last part. First we obtain
\begin{equation}\label{small change}
\begin{split}
\int^{t}_{s}\|U(t-\tau)I_3f(\tau)\|_{{\cal H}_0}^2d\tau
&=\int^{t}_{s}\left\|U(t-\tau)\int_{{\rm Im}\lambda=
a} e^{(i\lambda+a) \tau}\chi(\lambda)\widehat{\psi'Uf}(\lambda))d\lambda\right\|^2_{{\cal H}_0}d\tau\\
&\leq  \int^{+\8}_{-\8}\left\|\int_{{\rm Im}\lambda=
a} e^{(i\lambda+a) \tau}\chi(\lambda)\widehat{\psi'Uf}(\lambda))d\lambda\right\|^2_{{\cal H}_0}d\tau
\end{split}
\end{equation}
since $U(t)$ is a propagator of a dissipative operator. By Plancheral's identity we have
\begin{align*} 
\int^{t}_{s}\|U(t-\tau)I_3f(\tau)\|_{{\cal H}_0}^2d\tau&\leq  \int^{+\8}_{-\8}\| \chi(\lambda+ia)\widehat{\psi'Uf}(\lambda+ia)\|_{{\cal H}_0}^2d\lambda\\
&\leq C \int^{+\8}_{-\8}\|\widehat{\psi'Uf}(\lambda+ia)\|_{{\cal H}_0}^2d\lambda.
\end{align*}
We use Plancheral's identity again and since $\psi'$ is a compact support function, we have
\begin{equation}\label{I_3-L^2est}
\begin{split}
\int^{t}_{s}\|U(t-\tau)I_3f(\tau)\|_{{\cal H}_0}^2d\tau&\leq C \int^{+\8}_{-\8}e^{2at}\|\psi'(t)U(t)f\|_{{\cal H}_0}^2dt\\
&\leq C\|f\|^2_{{\cal H}_0}.
\end{split}
\end{equation}
By the Schwartz inequality, from $(\ref{du})$, $(\ref{I_1-L^2est})$ and $(\ref{I_3-L^2est})$, we have
\begin{align*}
e^{-\varepsilon t }&\|I_1f(t)\|_{{\cal H}_0}\leq e^{-\varepsilon t } \|U(t-s)I_1f(s)\|_{{\cal H}_0}+\left\| \int_{s}^te^{-\varepsilon t }U(t-\tau)( I_3f(\tau)+aI_1f(\tau))d\tau\right\|_{{\cal H}_0}\\
&\leq C\|f\|_{{\cal H}_0}+\left\{ \int_{s}^te^{-2\varepsilon t}d\tau\right\}^{1/2}\left\{ \int_{s}^t\|U(t-\tau)I_3f(\tau)\|_{{\cal H}_0}^2d\tau+\int_{s}^t\|U(t-\tau)aI_1f(\tau)\|_{{\cal H}_0}^2d\tau\right\}^{1/2}\\
&\leq C\|f\|_{{\cal H}_0}+C\left\{ \int_{s}^t\|U(t-\tau)I_3f(\tau)\|_{{\cal H}_0}^2d\tau +\int_{-\8}^\8\|I_1f(\tau)\|_{{\cal H}_0}^2d\tau\right\}^{1/2}\\
&\leq C \|f\|_{{\cal H}_0}.
\end{align*}
So $e^{-at}I_1 f(t)$ decays exponentially.

Next we study $I_2f$ part. We show the following estimate.
\begin{proposition}\label{low decay}
There exists $C>0$ such that for any $t>1$ and $f\in {\cal H}$, the following bound holds
$$\|I_2f(t)\|_{{\cal H}_0}\leq C(t^{-1/2}\|f\|_{{\cal H}}).$$
\end{proposition}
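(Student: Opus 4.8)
The plan is to analyze the low-frequency piece $I_2f(t)$ by choosing the contour $\mathcal{C}$ explicitly and exploiting the refined resolvent bound $\|(\lambda-{\cal A})^{-1}\|_{{\cal H}\to{\cal H}_0}={\cal O}((1+|\lambda|)|{\rm Im}\lambda|^{-1/2}+(|\lambda|^{1/2}+|\lambda|^2)|{\rm Im}\lambda|^{-1})$ from Proposition \ref{resolvent estimate}. The key point is that near $\lambda=0$ this bound has only a $|{\rm Im}\lambda|^{-1/2}$ type singularity when acting on the smaller space $\cal H$, which is exactly what produces the $t^{-1/2}$ rate. First I would record that $\widehat{\psi'Uf}(\lambda)$ is entire in $\lambda$ and, since $\psi'$ is compactly supported in $[1,2]$ and $U(t)$ is a contraction on ${\cal H}_0$ (and bounded by $C(1+t)$ on $\cal H$ by \eqref{propa}), satisfies $\|\widehat{\psi'Uf}(\lambda)\|_{{\cal H}_0}\leq C e^{2|{\rm Im}\lambda|}\|f\|_{{\cal H}_0}$ with all derivatives controlled similarly; in particular on any bounded contour it is ${\cal O}(\|f\|_{{\cal H}_0})$.

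Next I would specify the contour $\mathcal{C}$ as the union of two vertical segments $\{{\rm Re}\,i\lambda = -a,\ \pm\delta \leq {\rm Im}\,i\lambda\}$... more precisely: parametrize so that $\mathcal C$ runs from $-\delta + ia$ down around the origin and back up to $\delta + ia$, staying in the region ${\rm Re}\,i\lambda \le 0$, $|{\rm Re}\lambda|<\delta$ where the resolvent is holomorphic off the imaginary axis. The natural choice (as in \cite{Bu}, \cite{Zw}) is a contour that, for the relevant spectral parameter $\mu = i\lambda$, hugs the imaginary axis at distance $\sim 1/t$: take the part of $\mathcal C$ near the origin to consist of the two horizontal segments ${\rm Im}\,\mu = \pm 1/t$, $|{\rm Re}\,\mu|\le \delta$, joined by a short vertical segment ${\rm Re}\,\mu = -1/t$ (or an arc) crossing the real axis. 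On this contour ${\rm Im}\lambda = -{\rm Re}\,\mu$ stays $\gtrsim 1/t$ away from $0$ in absolute value except on the tiny connecting piece of length $O(1/t)$.

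Then the estimate splits into three contributions. On the two vertical outer pieces ${\rm Re}\lambda = \pm\delta$... these are actually handled by the $\chi$ cutoff and belong to $I_1$, so on $\mathcal C$ proper we only see $|{\rm Re}\lambda| \le \delta$. On the horizontal segments ${\rm Im}\,\mu = \pm 1/t$: here $|e^{i\lambda t}| = e^{-t\,{\rm Re}\,\mu}$, and writing $s = -{\rm Re}\,\mu \in (-1/t, \delta]$... actually ${\rm Re}\,\mu \le 0$ so $e^{-t\,{\rm Re}\,\mu}$ could grow; I must instead close the contour into ${\rm Re}\,\mu \le 0$ only on segments where I gain decay, meaning the relevant geometry is that $\mathcal C$ dips into ${\rm Re}\,\mu>0$? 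No — $e^{i\lambda t} = e^{i t\,{\rm Re}\lambda}e^{-t\,{\rm Im}\lambda}$, so decay needs ${\rm Im}\lambda > 0$, i.e. ${\rm Re}\,\mu < 0$, i.e. $\mathcal C$ should lie in ${\rm Re}\,\mu<0$, which it does. So on the horizontal pieces, using the ${\cal H}\to{\cal H}_0$ resolvent bound $\lesssim |{\rm Im}\lambda|^{-1/2} = |{\rm Re}\,\mu|^{-1/2}$ near the origin, the integrand is $\lesssim |{\rm Re}\,\mu|^{-1/2} e^{-t|{\rm Re}\,\mu|}\|f\|_{\cal H}$, and the $d({\rm Re}\,\mu)$ integral $\int_0^\delta \sigma^{-1/2} e^{-t\sigma}\,d\sigma \lesssim t^{-1/2}$ gives exactly the claimed bound; the $|\lambda|^2 |{\rm Im}\lambda|^{-1}$ term contributes $\int_0^\delta \sigma\cdot\sigma^{-1}e^{-t\sigma}d\sigma \lesssim t^{-1}$, which is better. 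On the short connecting arc where $|{\rm Im}\lambda| \sim 1/t$ and which has length $\sim 1/t$, the resolvent bound is $\lesssim t^{1/2}$, $|e^{i\lambda t}| \le e^{O(1)}$, so the contribution is $\lesssim (1/t)\cdot t^{1/2}\|f\|_{\cal H} = t^{-1/2}\|f\|_{\cal H}$, matching. Summing the three pieces gives $\|I_2f(t)\|_{{\cal H}_0}\le C t^{-1/2}\|f\|_{\cal H}$ for $t>1$, and the general case of $f\in\cal H$ follows by the density of $D({\cal A}^k)\times\{u_0\in H\}$ together with the a priori bound \eqref{propa}.

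The main obstacle is purely bookkeeping around the origin: making the contour deformation rigorous (justifying Cauchy's theorem on the non-compact contour, which needs the decay of $\widehat{\psi'Uf}$ at infinity established on the regularized data and the holomorphy of the resolvent off the imaginary axis provided by Lemma \ref{lem3.1}), and then balancing the two competing effects — the $|{\rm Im}\lambda|^{-1/2}$ blow-up of the resolvent versus the exponential factor $e^{-t|{\rm Im}\lambda|}$ — by the $1/t$-scaling of the contour. Once the contour is fixed the estimates are elementary one-dimensional integrals; the only subtlety worth care is checking that the ${\cal H}\to{\cal H}_0$ bound (rather than the weaker ${\cal H}_0\to{\cal H}_0$ bound, which would only give $t^{-1/2}$ against the $|{\rm Im}\lambda|^{-1}$ singularity and hence a worse or no rate without the extra regularity) is what is being used, i.e. that all the data on the contour is measured in the $\cal H$-norm, which is legitimate since $\widehat{\psi'Uf}(\lambda)\in\cal H$ uniformly.
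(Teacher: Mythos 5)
Your overall strategy --- deform onto a contour passing at distance $\sim 1/t$ from the origin and balance the square-root singularity of the ${\cal H}\to{\cal H}_0$ resolvent bound against the exponential factor --- is the same as the paper's. But your contour near the origin is wrong, and the error sits exactly at the point you flagged and then talked yourself out of. Writing $\mu=i\lambda$ for the spectral parameter, the endpoints $\pm\delta+ia$ of ${\cal C}$ correspond to $-a\pm i\delta$, both in $\{{\rm Re}\,\mu<0\}$, so any connecting contour must cross the real axis of the $\mu$-plane. The resolvent $(\mu-{\cal A})^{-1}$ is only known to exist for ${\rm Re}\,\mu>0$ and for ${\rm Re}\,\mu>-\delta$ with ${\rm Im}\,\mu\neq 0$; the segment $(-\delta,0]$ of the negative real axis is in general spectrum (for $\pl_t^2-\Delta+\pl_t$ the interval $[-1/2,0]$ is filled by $\mu=(-1+\sqrt{1-4|\xi|^2})/2$). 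Hence your connector ``${\rm Re}\,\mu=-1/t$ crossing the real axis'' passes through the spectrum and the deformation is not justified, and your conclusion that ${\cal C}$ should lie entirely in $\{{\rm Re}\,\mu<0\}$ cannot be realized. The contour must dip into $\{{\rm Re}\,\mu>0\}$: the paper takes a semicircle ${\cal C}_o$ of radius $1/t$ around the origin in the right half $\mu$-plane, where $|e^{\mu t}|\leq e$ is merely bounded, the resolvent is ${\cal O}(|\mu|^{-1/2})={\cal O}(t^{1/2})$ from ${\cal H}$ to ${\cal H}_0$ by the second block of Proposition \ref{resolvent estimate}, and the arc length ${\cal O}(1/t)$ produces the $t^{-1/2}$ contribution. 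This is the one idea your write-up is missing.

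A secondary point: on the pieces of the contour with ${\rm Re}\,\mu<0$, the pointwise bound you quote, $|{\rm Re}\,\mu|^{-1/2}$, is not what Proposition \ref{resolvent estimate} gives. The singular factor there is $|{\rm Im}\,\mu|^{-1/2}$ (plus a term $(|\mu|^{1/2}+|\mu|^2)|{\rm Im}\,\mu|^{-1}$), i.e.\ the distance of the spectral parameter to the real axis, not to the imaginary axis; you have swapped the Fourier variable $\lambda$ and the spectral parameter $i\lambda$. On the paper's segments ${\cal C}_\pm$ one has $|{\rm Im}\,\mu|\sim|\mu|\sim s+1/t$, so both terms are ${\cal O}(t^{1/2})$ uniformly and $\int_0^1 t^{1/2}e^{-ast}\,ds={\cal O}(t^{-1/2})$; on your horizontal segments ${\rm Im}\,\mu=\pm 1/t$ the correct bound is ${\cal O}(t^{1/2}+|{\rm Re}\,\mu|^{1/2}t)$, which happens still to integrate to $t^{-1/2}$ against $e^{-t|{\rm Re}\,\mu|}$, but not for the reason you state. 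The remaining ingredients (holomorphy and growth of $\widehat{\psi'Uf}$, use of the ${\cal H}\to{\cal H}_0$ rather than the ${\cal H}_0\to{\cal H}_0$ bound, the closing density argument) are fine.
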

\begin{proof}
We choose the contour as follows ${\cal C}={\cal C}_o \cup  {\cal C}_+\cup  {\cal C}_-$. ${\cal C}_o=\{\frac{1}{t}e^{is}; s\in[-\pi,0] \}$, ${\cal C}_+=\{(1-s)\frac{1}{t}+s(\delta+ia); s\in[0,1] \}$, ${\cal C}_-=\{-(1-s)\frac{1}{t}+s(-\delta+ia); s\in[0,1] \}$. Here we impose suitable orientation on these contours. We estimate each contours in the following integral,
$$I_2f(t)=\frac{1}{\sqrt{2\pi}}\int_{{\cal C}_o \cup  {\cal C}_+\cup  {\cal C}_-}(i\lambda-{\cal A})^{-1} e^{i\lambda t}\widehat{\psi'U(\lambda)}fd\lambda.$$
We have $\|(i\lambda-{\cal A})^{-1}\|_{{\cal H}\to {\cal H}_0}={\cal O}(t^{1/2})$ on ${\cal C}_o$ by Proposition $\ref{resolvent estimate}$ and the length of ${\cal C}_o$ is ${\cal O}(1/t)$. So the following bound holds
\begin{align*}
\left\|\int_{{\cal C}_o}(i\lambda-{\cal A})^{-1} e^{i\lambda t}\widehat{\psi'Uf}(\lambda)d\lambda\right\|_{{\cal H}_0}&={\cal O}\left( \frac{1}{t}t^{1/2}\sup_{\lambda\in {\cal C}_o}\|\widehat{\psi'Uf}(\lambda)\|_{\cal H}\right).
\end{align*}
From $(\ref{propa})$ and $\psi'$ is a compact support function, we have 
$$\sup_{\lambda\in {\cal C}_o}\|\widehat{\psi'Uf}(\lambda)\|_{\cal H}=\sup_{\lambda\in {\cal C}_o}\left\|\frac{1}{\sqrt{2\pi}}\int^\8_{-\8} e^{-i\lambda s}\psi'(s)U(s) fds\right\|_{\cal H} = {\cal O}(1)\|\psi'\|_{L^1}\|f\|_{\cal H}$$
So we have the estimate for ${\cal C}_o$ part. Next we estimate $\cal C_+$ part, for sufficient large $t$, we have $|\lambda|\sim |{\rm Re}\lambda|\sim s+1/t$ on ${\cal C}_+$. So by Proposition $\ref{resolvent estimate}$, we obtain
$$ \|(i\lambda-{\cal A})^{-1}\|_{{\cal H}\to {\cal H}_0} ={\cal O}(t^{1/2}).$$
Thus 
\begin{align*}
\left\|\int_{{\cal C}_+}(i\lambda-{\cal A})^{-1} e^{i\lambda t}\widehat{\psi'Uf}(\lambda)d\lambda\right\|_{{\cal H}_0}&={\cal O}\int_{0}^1t^{1/2} e^{-ast}ds \sup_{\lambda\in {\cal C}_+}\|\widehat{\psi'Uf}(\lambda)\|_{\cal H}\\
&={\cal O}(t^{-1/2}\|f\|_{\cal H}).
\end{align*}
In the same way, we can estimate ${\cal C}_-$ part and we have proved.
\end{proof}
Thus we have proved the abstract energy decay of the damped wave equation for sufficiently regular initial data. Then by density argument, we obtain the theorem. 
\begin{remark}\label{ene rem}
In the proof, we essentially use the following facts

$(i)$ the existence of the resolvent for ${\rm Im} \lambda\not=0$ and ${\rm Re}\lambda>-c$, $c>0$,

$(ii)$ uniform estimate of resolvent for large $\lambda$ which comes from $|Im \lambda|^{-1}$ order estimate of $R(\lambda),$

$(iii)$ $|\lambda|^{-1/2}$ order estimate of the resolvent near the origin which comes from $|\lambda|^{-1}$ estimate of $R(\lambda)$ for ${\rm Re} \lambda>0$ and ${\rm Im} \lambda \sim |\lambda|$ regions.
\end{remark}
\end{section}

\begin{section}{Proof of Theorem ${\ref{main}}$}
From now on we shall prove Theorem ${\ref{main}}$.
We recall $\tilde A=B^{-1/2}AB^{ -1/2}.$ 
Since $B^{-1/2}$ is bounded from $D(A)$ to $D(A)$, this operator is also a self-adjoint operator whose domain is $D(A)$. By the spectral theorem, we introduce a projection operator $\phi(\tilde A)$. 
Here $\phi$ is a cut-off function defined as follows
\begin{equation}\label{cutft}
\phi(x)=
\begin{cases}
1, \ x\geq\varepsilon,\\
0,\ x<\varepsilon.
\end{cases}
\end{equation}
By using this function, we define a cut-off operator by
$$\phi_B(A)=B^{-1/2}\phi(\tilde A)B^{1/2}. $$
Then we have
$$\phi_B(A)^2=\phi_B(A),\ \ \phi_B(A)^*=B^{1/2}\phi(\tilde A)B^{-1/2}.$$
Since $U(t): {\cal H}\rightarrow {\cal H}$, we can apply $\phi_B(A)$ to $U(t)f={}^t(u(t),v(t))$, $f={}^t(u_0,u_1)\in {\cal H}$ by
$$\phi_B(A)U(t)f={}^t(\phi_B(A)u(t),\phi_B(A)v(t)).$$
To prove Theorem ${\ref{main 2}}$, we use the following decomposition
\begin{align*}
U(t)f&=\phi_B(A)U(t)f+(1-\phi_B(A))U(t)f\\
&=(\text{High frequency part}) + (\text{Low frequency part})
\end{align*}
and estimate each part. 
First we estimate the High frequency part. 
\begin{proposition}\label{high}
There exists $C>0$ such that for any $t>1$ and $f\in {\cal H}$, we have the following bound
$$\|\phi_B(A)U(t)f\|_{\cal H}\leq Ct^{-2}\|f\|_{\cal H}.$$
\end{proposition}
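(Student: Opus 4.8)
The plan is to run the Fourier--Laplace / contour-deformation scheme of Section 4 with the extra factor $\phi_B(A)$ inserted, deforming the contour past the imaginary axis. For $t\ge 2$ we have $\psi(t)U(t)f=U(t)f$, so from $\widehat{Vf}(\lambda)=(i\lambda-{\cal A})^{-1}\widehat{\psi'Uf}(\lambda)$ we get
$$\phi_B(A)U(t)f=\frac{1}{\sqrt{2\pi}}\int_{{\rm Im}\,\lambda=-\varepsilon_0}e^{i\lambda t}\,\phi_B(A)(i\lambda-{\cal A})^{-1}\widehat{\psi'Uf}(\lambda)\,d\lambda ,$$
first for $f\in D({\cal A}^k)$ with $u_0\in H$, then by density; the range $1<t<2$ is trivial from $(\ref{propa})$. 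One remark up front: on ${\rm Ran}\,\phi_B(A)$ the norms of ${\cal H}$ and ${\cal H}_0$ are equivalent, since $u=\phi_B(A)u$ gives $\|\sqrt A u\|_H^2=(\tilde A\,B^{1/2}u,B^{1/2}u)_H\ge\varepsilon\|B^{1/2}u\|_H^2\ge\varepsilon c\,\|u\|_H^2$; thus it is enough to bound $\phi_B(A)(i\lambda-{\cal A})^{-1}$ as an operator ${\cal H}\to{\cal H}_0$, hence, by Proposition~\ref{resolvent(prop)}, to bound $\phi_B(A)$ composed with the four blocks $R(\lambda)(\lambda+B)$, $R(\lambda)$, $R(\lambda)A$, $\lambda R(\lambda)$.

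The heart of the matter is the behaviour of $\phi_B(A)R(\lambda)$ near $\lambda=0$. Using $A=B^{1/2}\tilde AB^{1/2}$ we have $R(\lambda)=B^{-1/2}S(\lambda)^{-1}B^{-1/2}$, $S(\lambda)=\tilde A+\lambda+\lambda^2B^{-1}$, so $\phi_B(A)R(\lambda)=B^{-1/2}\phi(\tilde A)S(\lambda)^{-1}B^{-1/2}$. Split $H={\rm Ran}\,\phi(\tilde A)\oplus{\rm Ran}(1-\phi(\tilde A))$; on the first summand $\tilde A\ge\varepsilon$, and since $\phi(\tilde A)$ commutes with $\tilde A$ and with $\lambda\cdot{\rm Id}$, the only off-diagonal coupling in $S(\lambda)$ comes from $\lambda^2B^{-1}$. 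Solving $S(\lambda)x=z$ block-wise, one writes $\phi(\tilde A)S(\lambda)^{-1}=\Theta(\lambda)+\Xi(\lambda)$, where for $|\lambda|$ small $\Theta(\lambda)$ is assembled from $(\tilde A+\lambda)^{-1}|_{{\rm Ran}\,\phi(\tilde A)}$ (bounded and holomorphic there for ${\rm Re}\,\lambda>-\varepsilon$) and the fixed bounded operator $\phi(\tilde A)B^{-1}\phi(\tilde A)$, so it is holomorphic and uniformly bounded on a disc $\{|\lambda|<\rho,\ {\rm Re}\,\lambda>-\rho\}$, $\rho=\rho(\varepsilon)>0$ — this is the ``genuinely high-frequency'' piece, behaving as in the constant-coefficient case — while $\Xi(\lambda)$ carries the low-frequency resolvent $(\tilde A+\lambda)^{-1}|_{{\rm Ran}(1-\phi(\tilde A))}$, which is only $O(|\lambda|^{-1})$ near the origin but enters multiplied by $\lambda^2$, so that $\|\Xi(\lambda)\|=O(|\lambda|)$ as $\lambda\to0$ along the contours we use. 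Carrying this through the four blocks yields a decomposition $\phi_B(A)(i\lambda-{\cal A})^{-1}=\mathcal T_1(\lambda)+\mathcal T_2(\lambda)$ of the same type (with the full cut-off resolvent, uniformly bounded by Proposition~\ref{resolvent estimate}, taking over for $|\lambda|$ bounded away from $0$).

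For $\mathcal T_1$ one deforms the near-origin contour to $\{{\rm Re}\,\lambda=-\rho'\}$, $0<\rho'<\rho$, and treats the part $|{\rm Re}\,\lambda|\ge\delta$ exactly as the high-frequency term $(\ref{exp decay})$ of Section 4 (using Lemmas~\ref{lem3.1}--\ref{lem3.2} and the rapid decay of $\widehat{\psi'Uf}$); in both cases $e^{i\lambda t}$ gives exponential decay, hence $O(e^{-\rho' t}\|f\|_{\cal H})=O(t^{-2}\|f\|_{\cal H})$. For $\mathcal T_2$ one keeps the contour of Proposition~\ref{low decay}, namely the arc ${\cal C}_o$ of radius $t^{-1}$, the rays ${\cal C}_\pm$, and the large-$|\lambda|$ remainder (again decaying as in $(\ref{exp decay})$); since $\|\mathcal T_2(\lambda)\|_{{\cal H}\to{\cal H}_0}=O(|\lambda|)$ on ${\cal C}_o\cup{\cal C}_\pm$, the arc contributes $O(t^{-1}\cdot t^{-1}\|f\|_{\cal H})$ and the rays contribute $O\big(\int_0^1 e^{-ast}(s+t^{-1})\,ds\,\|f\|_{\cal H}\big)=O(t^{-2}\|f\|_{\cal H})$. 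Summing the two pieces gives the bound.

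The step I expect to be the main obstacle is the block decomposition $\phi(\tilde A)S(\lambda)^{-1}=\Theta(\lambda)+\Xi(\lambda)$ and the cancellation it encodes: since $\phi(\tilde A)$ and $B^{\pm1/2}$ do not commute, the cut-off does not exactly decouple the two spectral halves, and one must verify that the leakage operator $\phi(\tilde A)B^{-1}(1-\phi(\tilde A))=[\phi(\tilde A),B^{-1}](1-\phi(\tilde A))$ is bounded (this is where the continuity of $B^{\pm1/2}$ on $D(A)$ is used) and that the accompanying factor $\lambda^2$ genuinely upgrades the $O(|\lambda|^{-1})$ low-frequency resolvent to an $O(|\lambda|)$ contribution, uniformly on the deformed contours as they approach the real axis — together with the analogous statements in the $H\to H_1$ and $H_1\to H_1$ norms for $R(\lambda)A$ and $R(\lambda)(\lambda+B)$, where one also uses the identity $R(\lambda)(\lambda+B)=\frac1\lambda-\frac1\lambda R(\lambda)A$. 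Once this uniform estimate is in place, the rest is the contour bookkeeping of Section 4 carried out with the extra factor $\phi_B(A)$.
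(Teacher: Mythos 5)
Your proposal is correct, and its skeleton --- the Fourier--Laplace representation, the $|{\rm Re}\,\lambda|\geq\delta$ piece handled by the Morawetz argument as in $(\ref{exp decay})$, and near the origin a split of $\phi_B(A)(i\lambda-{\cal A})^{-1}$ into a part holomorphic across $\lambda=0$ (deformed past the imaginary axis for exponential decay) plus an ${\cal O}(|\lambda|)$ remainder integrated over ${\cal C}_o\cup{\cal C}_\pm$ to yield $t^{-2}$ --- is exactly the paper's. Where you genuinely diverge is in how that near-origin decomposition is produced. You invert $S(\lambda)=\tilde A+\lambda+\lambda^2B^{-1}$ block-wise relative to $\phi(\tilde A)$, which forces you to control the off-diagonal coupling $\phi(\tilde A)B^{-1}(1-\phi(\tilde A))$ and to justify a Schur-complement/Neumann-series inversion uniformly as the contour approaches the real axis; you rightly flag this as the main obstacle. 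The paper makes that obstacle disappear: it uses the one-line identity $(\ref{asym resolvent})$, $R(\lambda)=(B\lambda+A)^{-1}-\lambda^2(B\lambda+A)^{-1}R(\lambda)$, together with the exact intertwining $\phi_B(A)(B\lambda+A)^{-1}=B^{-1/2}\phi(\tilde A)(\lambda+\tilde A)^{-1}B^{-1/2}$, in which $\phi(\tilde A)$ commutes with $(\lambda+\tilde A)^{-1}$ on the nose, so there is no leakage term at all; and in the remainder the factor $\lambda^2$ is simply played against the \emph{full} resolvent bound $\|(i\lambda-{\cal A})^{-1}\|_{{\cal H}\to{\cal H}}={\cal O}(|\lambda|^{-1})$ of Proposition $\ref{resolvent estimate}$, with no spectral splitting of $R(\lambda)$ itself. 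Your reduction to the ${\cal H}_0$-norm via $\|\sqrt A u\|_H^2=(\tilde AB^{1/2}u,B^{1/2}u)_H\geq\varepsilon c\|u\|_H^2$ on ${\rm Ran}\,\phi_B(A)$ is a valid minor shortcut not used in the paper. Two small corrections: the boundedness of $\phi(\tilde A)B^{-1}(1-\phi(\tilde A))$ does not rest on the continuity of $B^{\pm1/2}$ on $D(A)$ --- it is immediate since $B\geq c\,Id$ makes $B^{-1}$ bounded on $H$; that continuity hypothesis is needed instead (via the interpolation argument in the paper) to bound $B^{-1/2}\phi(\tilde A)(i\lambda+\tilde A)^{-1}B^{-1/2}M(i\lambda)$ in the $H_1$-component of the ${\cal H}$-norm. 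And for the deformed piece on ${\rm Im}\,\lambda=a$ one needs the modification the paper notes at $(\ref{small change})$, since $U(t-\tau)$ is only ${\cal O}(1+t)$ on ${\cal H}$ by $(\ref{propa})$; this costs an $\varepsilon$ in the exponential rate but does not affect the conclusion.
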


\begin{proof}
Applying similar argument to the previous section, for sufficient regular $f$, we write 
\begin{align*}
\phi_B(A)\psi(t)U(t)f&=\phi_B(A)\frac{1}{\sqrt{2\pi}}\int_{{\rm Im}\lambda=
a}\chi(\lambda)(i\lambda-{\cal A})^{-1} e^{i\lambda t}\widehat{\psi'Uf}(\lambda)d\lambda\\
&\quad +\phi_B(A)\frac{1}{\sqrt{2\pi}}\int_{\cal C}(i\lambda-{\cal A})^{-1} e^{i\lambda t}\widehat{\psi'Uf}(\lambda)d\lambda\\
&=\phi_B(A)e^{-at}I_1f(t)+\phi_B(A)I_2f(t)\\
&=(\text{High frequency part})+(\text{Low frequency part})
\end{align*}
Here $\psi (t)$ and $\chi(\lambda)$ are cut-off functions introduced in previous section, $a>0$ is a sufficient small constant and ${\cal C}$ is the contour around the origin which is also used in the former section. Since $\psi(t)=1$ for large $t$, we have
$$\phi_B(A)U(t)f=\phi_B(A)\psi(t)U(t)f$$
for large $t$. So we need the estimate of $\phi_B(A)\psi(t)U(t)f$. The estimate of $e^{-at}I_1f(t)$ is essentially similar to the argument to Proposition $\ref{exp decay}$ but we need small modification in $(\ref{small change})$ since we only know the estimate $(\ref{propa})$ and $U(t-\tau)$ may be not bounded. 
In this case, we can prove the following estimate by similar argument for arbitrary small $\varepsilon>0$
$$\int^{t}_{s}\|e^{-\varepsilon t}U(t-\tau)I_3f\|_{{\cal H}}^2d\tau \leq C\|f\|_{\cal H}.$$
So we conclude the following bound
\begin{equation}\label{I_1}
\|\phi_B(A)e^{-at}I_1f(t)\|_{\cal H}={\cal O}(e^{-(a-2\varepsilon)t}\|f\|_{\cal H}).
\end{equation}
Next we shall estimate $\phi_B(A)I_2f$ by using the operator
$$(B\lambda+A)^{-1}=B^{-1/2}(\lambda+\tilde A)^{-1}B^{-1/2}.$$
This identity is easily seen from $(B\lambda+A)=B^{1/2}(\lambda+\tilde A)B^{1/2}$. 
From the following identity
$$(\lambda^2+B\lambda+A)(B\lambda+A)^{-1}=Id +\lambda^2(B\lambda+A)^{-1}, $$
we also have
\begin{equation}\label{asym resolvent}
R(\lambda)=(B\lambda+A)^{-1}-\lambda^2(B\lambda+A)^{-1}R(\lambda).
\end{equation}
Here $R(\lambda)=(\lambda^2+B\lambda+A)^{-1}$ and $(B\lambda+A)^{-1}$ are commutative operators. We recall the identity
\begin{align}\label{reso}
(\lambda-{\cal A})^{-1}=
R(\lambda)\begin{pmatrix}
(\lambda+B) & Id \\
-A& \lambda\\
\end{pmatrix}=R(\lambda)M(\lambda).
\end{align}
Applying these identities, we have
\begin{align*}
\phi_B(A)I_2f(t)&=
\frac{1}{\sqrt{2\pi}}\int_{\cal C}\phi_B(A)\{(Bi\lambda+A)^{-1}+\lambda^2(Bi\lambda+A)^{-1}R(i\lambda)\}M(i\lambda)e^{i\lambda t}\widehat{\psi'Uf}(\lambda)d\lambda\\
&=J_1f(t)+J_2f(t).
\end{align*}
Here
$$J_1f(t)=\frac{1}{\sqrt{2\pi}}\int_{\cal C}B^{-1/2}\phi(\tilde A)(i\lambda+\tilde A)^{-1} B^{-1/2}e^{i\lambda t}M(i\lambda)\widehat{\psi'Uf}(\lambda)d\lambda,$$
\begin{equation}
J_2f(t)=\frac{1}{\sqrt{2\pi}}\int_{\cal C}B^{-1/2}\phi(\tilde A)(i\lambda+\tilde A)^{-1} B^{-1/2}\lambda^2(i\lambda-{\cal A})^{-1}e^{i\lambda t}\widehat{\psi'Uf}(\lambda)d\lambda.
\end{equation}
Since $\phi(\tilde A)$ is a cut-off operator to the high frequency part. $\phi(\tilde A)(i\lambda+\tilde A)^{-1}$ can be holomorphically extended across the origin. Since $B^{1/2}$ and $B^{-1/2}$ are continuous on $D(A)$, by interpolation we have the following estimate if $\lambda$ is sufficiently near the origin, 
\begin{align*}
\|B^{-1/2}\phi(\tilde A)(i\lambda +\tilde A)^{-1}u\|^2_{H_1}&=\|{\sqrt A}B^{-1/2}\phi(\tilde A)(i\lambda +\tilde A)^{-1}u\|^2_{H}\\
&=({\tilde A}\phi(\tilde A)(i\lambda +\tilde A)^{-1}u,\phi(\tilde A)(i\lambda +\tilde A)^{-1}u)_H \\
&=\|{\sqrt{\tilde A}}\phi(\tilde A)(i\lambda +\tilde A)^{-1}u\|^2_{H}\\
&=\|\phi(\tilde A)(i\lambda +\tilde A)^{-1}{\sqrt{\tilde A}}u\|^2_{H}\\
&={\cal O}(\|{\sqrt{\tilde A}}u\|^2_{H})={\cal O}(\|u\|^2_{H_1}+\|u\|^2_H).
\end{align*}
In the same way, using  $(i \lambda+\tilde A)^{-1}\tilde A= Id -i\lambda(i \lambda+\tilde A)^{-1}$, near the origin, we have the following bound 
$$\|B^{-1/2}\phi(\tilde A)(i\lambda +\tilde A)^{-1}B^{-1/2}M(i\lambda)\|_{{\cal H}\to {\cal H}}={\cal O}(1).$$
For $J_1f$, we deform the contour to ${\rm Re}\  i\lambda<0$. Then we have
\begin{align*}
\|J_1f(t)\|_{\cal H}&=\frac{1}{\sqrt{2\pi}}\left\|\int_{\rm {Im}  \lambda =a, |{\rm Re}  \lambda|<\delta}B^{-1/2}\phi(\tilde A)(i\lambda+\tilde A)^{-1} B^{-1/2}e^{i\lambda t}M(i\lambda)\widehat{\psi'Uf}(\lambda)d\lambda\right\|_{\cal H}\\
&\leq C \int_{\rm {Im}\lambda =a, |{\rm Re}\lambda|<\delta} |e^{i\lambda t}|\|\widehat{\psi'Uf}(\lambda)\|_{\cal H}d\lambda\\
&\leq C e^{-a t}\|f\|_{\cal H}.
\end{align*}
Thus this part decays exponentially. 
We apply same contour deformation as in Proposition $\ref{low decay}$ to $J_2f$. Since $\|\lambda^2(i\lambda+{\cal A})^{-1}\|_{{\cal H}\to {\cal H}}={\cal O} (\lambda)$ for ${\rm Re}\  i \lambda>0$, we obtain 
$\|\lambda^2(i\lambda+{\cal A})^{-1}\|_{{\cal H}\to {\cal H}}={\cal O} (1/t)$ on ${\cal C}_o$ which is defined in the proof of Proposition $\ref{low decay}$. Then the length of ${\cal C}_o$ is ${\cal O} (1/t)$ and $\phi(\tilde A)(i\lambda+\tilde A)^{-1}$ is bounded, we obtain
$$\left\|\int_{{\cal C}_o}B^{-1/2}\phi(\tilde A)(i\lambda+\tilde A)^{-1} B^{-1/2}\lambda^2(i\lambda+{\cal A})^{-1}e^{i\lambda t}\widehat{\psi'Uf}(\lambda)d\lambda\right\|_{\cal H}={\cal O}(t^{-2}\|f\|_{\cal H}).$$
On ${\cal C}_+$, we have $|\lambda| \sim |{\rm Re}\lambda|\sim s+1/t$. So $\|(i\lambda+{\cal A})^{-1}\|_{{\cal H}\to {\cal H}}={\cal O}(|\lambda|^{-1})={\cal O}(t)$ and 
\begin{align*}
&\left\|\int_{{\cal C}_+}B^{-1/2}\phi(\tilde A)(i\lambda+\tilde A)^{-1} B^{-1/2}\lambda^2(i\lambda+{\cal A})^{-1}e^{i\lambda t}\widehat{\psi'Uf}(\lambda)d\lambda\right\|_{\cal H}\\
&\quad ={\cal O}(1)\int_{0}^1t (s+1/t)^2 e^{-ast}ds\|f\|_{\cal H}\\
&\quad ={\cal O}(t^{-2}\|f\|_{\cal H}).
\end{align*}
We can get the same estimate for ${\cal C}_-$. Now by density, we have proved.
\end{proof}
Next we estimate the low frequency part: $(1-\phi_B(A))U(t)f=\tilde\phi_B(A)U(t)f$. Here we write
$$\tilde \phi=1-\phi.$$
We assume $f$ is sufficiently regular then by the similar argument used in the previous section, for a cut-off function $\psi$, the following identity holds
$$\tilde \phi_B(A)\psi(t)U(t)f=\frac{1}{\sqrt{2\pi}}\int_{{\rm Im}\lambda=
-\varepsilon}\tilde \phi_B(A)R(i\lambda)M(i\lambda)e^{i\lambda t}\widehat{\psi'Uf}(\lambda)d\lambda.$$
Writing $U(t)f={}^t(u(t),v(t))$, $\widehat{\psi'Uf}(\lambda)={}^t({\widehat{\psi'Uf}(\lambda)}_0,{\widehat{\psi'Uf}(\lambda)}_1)$, we have the following identity for large $t$
\begin{equation}\label{high eq}
\tilde \phi_B(A)u(t)=\frac{1}{\sqrt{2\pi}}\int_{{\rm Im}\lambda=
-\varepsilon}\tilde \phi_B(A)R(i\lambda)e^{i\lambda t}\left((i\lambda+B){\widehat{\psi'Uf}(\lambda)}_0+{\widehat{\psi'Uf}(\lambda)}_1\right)d\lambda.
\end{equation}
We define $\psi_n(t)=\psi(nt)$ for the cut-off function $\psi$ satisfying $\psi'\geqq0$. Then $\psi_n'$ become a Dirac sequence i.e. $\psi_n'(t)\to\delta(t)$, as $n\to\8$ where $\delta(t)$ is the Dirac measure. We change $\psi$ in the above integral to a sequence of cut-off functions $\{\psi_n\}$. 
So $\widehat{\psi_n'Uf}{(\lambda)}\to {\frac{1}{\sqrt 2\pi}} U(0)f=\frac{1}{\sqrt{2\pi}}f $ as $n\to\8$ and we would like to take this limit in the integral. 
To verify this limiting argument, we use the following lemma.
\begin{lemma}\label{cutoff resolvent}
If $R(\lambda)$ exists for a $\lambda\not=0$ then we have the following bound
$$\|\tilde\phi_B(A)R(\lambda)u\|_H={\cal O}(|\lambda|^{-2}\|u\|_H)+{\cal O}(|\lambda|^{-2}(|\lambda|+1)\|R(\lambda)u\|_H),$$
$$\|\tilde\phi_B(A)R(\lambda)Av\|_H={\cal O}(|\lambda|^{-2}\|v\|_H)+{\cal O}(|\lambda|^{-2}(|\lambda|+1)\|R(\lambda)Av\|_H),$$
for $u \in H$ and $v\in D(A)$.
\end{lemma}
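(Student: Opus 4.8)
The plan is to conjugate both $R(\lambda)$ and $\tilde\phi_B(A)$ by $B^{\pm1/2}$, so that everything is expressed through the single self-adjoint operator $\tilde A$, and then to exploit two facts: that the spectral cut-off makes $\tilde A\tilde\phi(\tilde A)$ a bounded operator (of norm $\le\varepsilon$), and that the genuinely dangerous low-frequency part of $R(\lambda)$ is of size $|\lambda|^{-2}$, not $|\lambda|^{-1}$, once the $\lambda^2$-term is retained. Concretely, from $B\lambda+A=B^{1/2}(\lambda+\tilde A)B^{1/2}$ one gets $\lambda^2+B\lambda+A=B^{1/2}(\tilde A+T_\lambda)B^{1/2}$ with $T_\lambda=\lambda+\lambda^2B^{-1}$, hence
\[
R(\lambda)=B^{-1/2}(\tilde A+T_\lambda)^{-1}B^{-1/2},\qquad (\tilde A+T_\lambda)^{-1}=B^{1/2}R(\lambda)B^{1/2},
\]
and, since $\tilde\phi_B(A)=B^{-1/2}\tilde\phi(\tilde A)B^{1/2}$, $A=B^{1/2}\tilde AB^{1/2}$ and $B^{\pm1/2}$ are bounded on $H$, it suffices to bound $\tilde\phi(\tilde A)(\tilde A+T_\lambda)^{-1}$ and $\tilde\phi(\tilde A)(\tilde A+T_\lambda)^{-1}\tilde A$ on $H$.

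The case $|\lambda|$ bounded, say $|\lambda|\le M$, is trivial: $\tilde\phi_B(A)$ and $B^{\pm1/2}$ are bounded, $\lambda\ne0$, and $|\lambda|^{-2}(|\lambda|+1)\ge M^{-2}$ there, so $\|\tilde\phi_B(A)R(\lambda)u\|_H\le C\|R(\lambda)u\|_H\le CM^2\,|\lambda|^{-2}(|\lambda|+1)\|R(\lambda)u\|_H$; the same works for the second expression, using $R(\lambda)A=B^{-1/2}(\tilde A+T_\lambda)^{-1}\tilde AB^{1/2}$ and that $\tilde\phi(\tilde A)(\tilde A+T_\lambda)^{-1}\tilde A$ is bounded (because $\tilde A\tilde\phi(\tilde A)$ is bounded and $(\tilde A+T_\lambda)^{-1}$ is bounded whenever $R(\lambda)$ exists). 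So from now on $|\lambda|\ge M$ with $M$ large.

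For large $|\lambda|$ the key point is that, since $B\ge c\,Id$ is bounded, $T_\lambda=\lambda^2B^{-1}(Id+\lambda^{-1}B)$ is invertible with $\|T_\lambda^{-1}\|_{H\to H}={\cal O}(|\lambda|^{-2})$ (Neumann series for $(Id+\lambda^{-1}B)^{-1}$, valid for $|\lambda|>\|B\|$). I then use the resolvent identity $(\tilde A+T_\lambda)^{-1}=T_\lambda^{-1}-T_\lambda^{-1}\tilde A(\tilde A+T_\lambda)^{-1}$, compose on the left with $\tilde\phi(\tilde A)$, and move $\tilde A$ leftwards past $T_\lambda^{-1}$ via $T_\lambda^{-1}\tilde A=\tilde AT_\lambda^{-1}+[T_\lambda^{-1},\tilde A]$, so that the first piece meets $\tilde\phi(\tilde A)$ and produces the bounded multiplier $g(\tilde A)$, $g(x)=x\tilde\phi(x)$. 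The resulting "main" terms are $T_\lambda^{-1}$ (or $g(\tilde A)T_\lambda^{-1}$) composed with bounded operators, hence of size $|\lambda|^{-2}$; the leftover resolvent factors are rewritten through $(\tilde A+T_\lambda)^{-1}=B^{1/2}R(\lambda)B^{1/2}$ and the identity $AR(\lambda)=Id-\lambda^2R(\lambda)-\lambda BR(\lambda)$ (from $R(\lambda)(\lambda^2+B\lambda+A)=Id$), which converts them into the $\|u\|_H$- and $\|R(\lambda)u\|_H$-contributions with the stated weights. The commutator is controlled by $[T_\lambda^{-1},\tilde A]=-\lambda^2T_\lambda^{-1}[B^{-1},\tilde A]T_\lambda^{-1}$, together with $[B^{-1},\tilde A]=-B^{-1}[B,\tilde A]B^{-1}$ and $[B,\tilde A]=B^{1/2}AB^{-1/2}-B^{-1/2}AB^{1/2}$, which is bounded from $D(A)$ to $H$ thanks to the standing hypotheses on $AB^{\pm1/2}$; one applies it to $B^{1/2}T_\lambda^{-1}R(\lambda)u\in D(A)$ and estimates its graph norm once more by $AR(\lambda)=Id-\lambda^2R(\lambda)-\lambda BR(\lambda)$. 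For the second inequality the only change is to start from $\tilde\phi(\tilde A)(\tilde A+T_\lambda)^{-1}\tilde A$ and to use $(\tilde A+T_\lambda)^{-1}\tilde A=Id-(\tilde A+T_\lambda)^{-1}T_\lambda$ and $R(\lambda)(\lambda^2+B\lambda+A)=Id$ in the form $\lambda^2R(\lambda)+\lambda BR(\lambda)=Id-R(\lambda)A$, to reduce to the estimates just obtained — the bounded factor $\tilde A\tilde\phi(\tilde A)$ supplying the extra smallness that replaces $\|Av\|_H$ by $\|v\|_H$.

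The main obstacle is the non-commutativity of $A$ and $B$: every error above is a commutator with $B$ or $B^{-1}$, and one must check, using only that $AB^{\pm1/2}$ are continuous on $D(A)$, that each such term is bounded and — more delicately — that the accompanying powers of $|\lambda|$ telescope to exactly $|\lambda|^{-2}$ and $|\lambda|^{-2}(|\lambda|+1)$; it is precisely here that retaining the $\lambda^2B^{-1}$ term, so that $\|T_\lambda^{-1}\|={\cal O}(|\lambda|^{-2})$ rather than the naive ${\cal O}(|\lambda|^{-1})$ coming from $(\lambda+\tilde A)^{-1}$, is indispensable.
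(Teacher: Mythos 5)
Your overall strategy (conjugate by $B^{\pm 1/2}$, split off the explicit $T_\lambda^{-1}$ via a resolvent identity, and exploit the boundedness of $\tilde A\tilde\phi(\tilde A)$) is in the right spirit, but the step you yourself flag as delicate --- that the commutator errors ``telescope to exactly $|\lambda|^{-2}$ and $|\lambda|^{-2}(|\lambda|+1)$'' --- is not verified, and it fails. Concretely, after writing $(\tilde A+T_\lambda)^{-1}=T_\lambda^{-1}-T_\lambda^{-1}\tilde A(\tilde A+T_\lambda)^{-1}$ and commuting $\tilde A$ past $T_\lambda^{-1}$, the error term is
$$\lambda^2\,\tilde\phi(\tilde A)\,T_\lambda^{-1}\,[B^{-1},\tilde A]\,T_\lambda^{-1}(\tilde A+T_\lambda)^{-1}w,\qquad w=B^{-1/2}u .$$
The operator $[B^{-1},\tilde A]$ is bounded only from $D(\tilde A)$ with its graph norm to $H$, so you must pay the graph norm of $T_\lambda^{-1}(\tilde A+T_\lambda)^{-1}w$; since $\tilde A(\tilde A+T_\lambda)^{-1}w=w-T_\lambda(\tilde A+T_\lambda)^{-1}w$ and $\|T_\lambda\|={\cal O}(|\lambda|^2)$, that graph norm is ${\cal O}(|\lambda|^{-2})(\|u\|_H+|\lambda|^2\|R(\lambda)u\|_H)$. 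Combined with the remaining prefactor $|\lambda|^2\cdot{\cal O}(|\lambda|^{-2})$, this yields ${\cal O}(|\lambda|^{-2})\|u\|_H+{\cal O}(1)\|R(\lambda)u\|_H$, i.e.\ a term ${\cal O}(1)\|R(\lambda)u\|_H$ where the lemma requires ${\cal O}(|\lambda|^{-2}(|\lambda|+1))\|R(\lambda)u\|_H={\cal O}(|\lambda|^{-1})\|R(\lambda)u\|_H$ for large $|\lambda|$. The same loss of one power of $|\lambda|$ occurs every time $\|AR(\lambda)u\|_H$ enters with a prefactor only ${\cal O}(|\lambda|^{-2})$, because $AR(\lambda)=Id-\lambda^2R(\lambda)-\lambda BR(\lambda)$ costs a full $|\lambda|^2\|R(\lambda)u\|_H$. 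This is not a harmless loss: in the paper the lemma is applied on the contours $\widehat{\cal C}_\pm$, where $R(i\lambda)={\cal O}(|\lambda|^{-1})$, precisely to obtain an integrable ${\cal O}(|\lambda|^{-2})$ bound; your version would only give ${\cal O}(|\lambda|^{-1})$, which is not integrable at infinity.

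The paper's proof avoids commutators altogether. It observes that $\tilde\phi_B(A)A$ is bounded on $H$, because its adjoint is $AB^{1/2}\tilde\phi(\tilde A)B^{-1/2}$ and $\tilde\phi(\tilde A):H\to D(A)$, $B^{1/2}:D(A)\to D(A)$ are bounded. Then for $u\in D(A)$,
$$|\lambda|^2\|\tilde\phi_B(A)u\|_H\le\|\tilde\phi_B(A)(\lambda^2+B\lambda+A)u\|_H+C(|\lambda|+1)\|u\|_H,$$
and substituting $u=R(\lambda)v$, resp.\ $u=R(\lambda)Av$ (so that the first term on the right becomes $\|\tilde\phi_B(A)v\|_H\le C\|v\|_H$, resp.\ $\|\tilde\phi_B(A)Av\|_H\le C\|v\|_H$, again by the boundedness of $\tilde\phi_B(A)A$) gives both estimates at once. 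The essential point you should retain is that the cut-off must sit directly against the unbounded operator $A$ (via the adjoint), so that $A$ is never applied to $R(\lambda)u$; once you separate them by $T_\lambda^{-1}$, the factor $|\lambda|^2\|R(\lambda)u\|_H$ hidden in $\|AR(\lambda)u\|_H$ cannot be recovered.
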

\begin{remark}
If $A$ is bounded, then $\lambda^2 +\lambda B+A={\cal O}(\lambda^2)$ for large $\lambda$. So we can expect nearly $|\lambda|^{-2}$ estimate of the high frequency cut-off resolvent.
\end{remark}
\begin{proof}
Since $\tilde\phi(\tilde A): H\to D(A)$ and $B^{1/2}: D(A)\to D(A)$ are bounded, we have
\begin{align*}
\|\tilde\phi_B(A) Au\|_H&=\|\tilde\phi_B(A) A\|_{H\to H}\| u\|_H=\|(\tilde\phi_B(A)A)^*\|_{H\to H}\| u\|_H\\ 
&=\|AB^{1/2}\tilde\phi(\tilde A)B^{-1/2}\|_{H\to H}\|u\|_H\leq C\|u\|_H.
\end{align*}
So we obtain 
$$\|(\tilde\phi_B(A)(\lambda^2+B\lambda+A)u\|_H\geq |\lambda|^2 \|\tilde\phi_B(A) u\|_H-C(|\lambda|+1)\|u\|_H.$$
Thus 
$$\|\tilde\phi_B(A) u\|_H\leq |\lambda|^{-2}\|(\tilde\phi_B(A)(\lambda^2+B\lambda+A)u\|_H+C|\lambda|^{-2}(|\lambda|+1)\|u\|_H.$$
Taking $u=R(\lambda)v$ and $u=R(\lambda)Av$, we have the desired inequality.
\end{proof}
  
Since $R(\lambda)(\lambda+B)=(1+ R(\lambda)A)/\lambda$, by $(\ref{high eq})$, we obtain
\begin{align*}
\tilde \phi_B(A)u(t)=&\frac{1}{\sqrt{2\pi}}\int_{{\rm Im}\lambda=
-\varepsilon}\tilde \phi_B(A)e^{i\lambda t}\frac{1}{i\lambda}{\widehat{\psi'Uf}(\lambda)}_0d\lambda\\
&+\frac{1}{\sqrt{2\pi}}\int_{{\rm Im}\lambda=
-\varepsilon}\tilde \phi_B(A)e^{i\lambda t}\left(\frac{1}{i\lambda}R(i\lambda)A{\widehat{\psi'Uf}(\lambda)}_0+R(i\lambda){\widehat{\psi'Uf}(\lambda)}_1\right)d\lambda\\
=&\tilde J_1(t)+\tilde J_2(t).
\end{align*} 
If $\supp\ \psi'\subset[0,c]$ we have the following estimate
$$\|{\widehat{\psi'Uf}(\lambda)}\|_{\cal H}=\left\|\frac{1}{\sqrt {2\pi}}\int^\8_{-\8} e^{-i\lambda s}\psi'(s)U(s)fds\right\|_{\cal H}\leq C e^{c{\rm Im }\lambda}\|\psi'\|_{L^1}\|f\|_{\cal H}=Ce^{c{\rm Im }\lambda}\|f\|_{\cal H}.$$
So for sufficient large $t$, in the integral $\tilde J_1$, we change the contour to $\tilde{\cal C}=\tilde {\cal C}_+ \cup \tilde {\cal C}_- \cup {\cal C}$. Here  
$\tilde {\cal C}_\pm=\{\pm s+is; s\in[\delta,\8)\}$ and ${\cal C}$ is a contour around the origin.
Thanks to the estimate $|e^{i\lambda t}|\leqq Ce^{-t{\rm Im }\lambda}$ on $\tilde{\cal C}_+ \cup \tilde{\cal C}_-$, if $t$ is sufficiently large, we have
\begin{align*}
\tilde J_1(t)&=\frac{1}{\sqrt{2\pi}}\int_{\tilde{\cal C}_+\cup\tilde{\cal C}_-\cup{\cal C}}\tilde \phi_B(A)e^{i\lambda t}\frac{1}{i\lambda}{\widehat{\psi'Uf}(\lambda)}_0d\lambda\\
&=\frac{1}{\sqrt{2\pi}}\int_{{\cal C}}\tilde \phi_B(A)e^{i\lambda t}\frac{1}{i\lambda}{\widehat{\psi'Uf}(\lambda)}_0d\lambda+{\cal O}(e^{-\delta(t-c)}\|f\|_{\cal H})
\end{align*}
 in ${ H}$. For $\tilde J_2$, we change the contour to $\widehat {\cal C}=\widehat{\cal C}_+ \cup \widehat{\cal C}_-  \cup {\cal C}$.  Here $\widehat{\cal C}_\pm=\{\pm s+i\varepsilon; s\in[\delta,\8)\}$ and ${\cal C}$ is a contour around the origin as in the proof of Proposition $\ref{low decay}$. By Lemma $\ref{cutoff resolvent}$, Lemma $\ref{lem3.4}$ and Lemma $\ref{lem3.1}$, on $\widehat{\cal C}_\pm$, we have the following estimate, 
$$\left\|\frac{1}{i\lambda}\tilde\phi_B(A)R(i\lambda)A{\widehat{\psi'Uf}(\lambda)}_0+\tilde\phi_B(A)R(i\lambda){\widehat{\psi'Uf}(\lambda)}_1\right\|_{H}={\cal O} (|\lambda|^{-2}\|{\widehat{\psi'Uf}(\lambda)}\|_{\cal H})={\cal O}(|\lambda|^{-2}\|f\|_{\cal H}).$$
This is integrable and changing the contour, we obtain
\begin{align*}
\tilde J_2(t)=\frac{1}{\sqrt{2\pi}}\int_{{\cal C}}\left(\frac{1}{i\lambda}\tilde\phi_B(A)R(i\lambda)A{\widehat{\psi'Uf}(\lambda)}_0+\tilde\phi_B(A)R(i\lambda){\widehat{\psi'Uf}(\lambda)}_1\right)d\lambda+{\cal O}(e^{-\varepsilon t}\|f\|_{\cal H})
\end{align*}
in ${ H}$. Thus for sufficient large $t$, we have
$$
\tilde \phi_B(A)u(t)=\frac{1}{\sqrt{2\pi}}\int_{{\cal C}}\tilde \phi_B(A)e^{i\lambda t}\left(R(i\lambda)(i\lambda+B){\widehat{\psi'Uf}(\lambda)}_0+R(i\lambda){\widehat{\psi'Uf}(\lambda)}_1\right)d\lambda+{\cal O}(e^{-\varepsilon t}\|f\|_{\cal H}).
$$
Changing $\psi$ in the above identity to the sequence $\{\psi_n\}$ and we take $n$ to infinity. Since $\|\psi_n'\|_{L^1}=\|\psi'\|_{L^1}$ and $\supp \ \psi_n\subset [0,c]$, the exponential term decays uniformly with respect to $n$ and ${\cal C}$ is compact, we can take limit under the integral sign. So tending $n$ to infinity, we conclude
\begin{equation}\label{asy}
\begin{split}
\tilde \phi_B(A)u(t)&=\frac{1}{2\pi}\int_{{\cal C}}\tilde \phi_B(A)e^{i\lambda t}\left(R(i\lambda)(i\lambda+B)u_0+R(i\lambda)u_1\right)d\lambda+{\cal O}(e^{-\varepsilon t}\|f\|_{\cal H})\\
&=\tilde J_3(t)+{\cal O}(e^{-\varepsilon t}\|f\|_{\cal H}).
\end{split}
\end{equation}
Finally we apply the identity $(\ref{asym resolvent})$ to get the asymptotic profile. By $(\ref{asym resolvent})$, we have
\begin{equation*}
\begin{split}
\tilde J_3(t)&=\frac{1}{2\pi}\int_{{\cal C}}B^{-1/2}\tilde \phi(\tilde A)(i\lambda+\tilde A)^{-1}B^{-1/2}e^{i\lambda t}\left(Bu_0+u_1\right)d\lambda\\
&+\frac{1}{2\pi}\int_{{\cal C}}B^{-1/2}\tilde \phi(\tilde A)(i\lambda+\tilde A)^{-1}B^{-1/2}e^{i\lambda t}\left(i\lambda u_0\right)d\lambda\\
&+\frac{1}{2\pi}\int_{{\cal C}}B^{-1/2}\tilde \phi(\tilde A)(i\lambda+\tilde A)^{-1} B^{-1/2}\lambda^2R(i\lambda)e^{i\lambda t}\left((i\lambda+B)u_0+u_1\right)d\lambda.
\end{split}
\end{equation*}
In the former integral, we can add contours $\tilde{\cal C}_\pm=\{\pm s+is; s\in[\varepsilon,\8)\}$, modulo exponential decaying factor. To the latter integrals, we apply same contour deformation as in Proposition $\ref{low decay}$. Then on the contour, $\tilde \phi(\tilde A)(i\lambda+\tilde A)^{-1}$ and $R(i\lambda)$ are $|\lambda|^{-1}$ order. So we obtain these integrals have $1/t$ bound. Thus we get
\begin{equation}\label{es1}
\begin{split}
\tilde J_3(t)&=\frac{1}{2\pi}\int_{{\cal C}\cup\tilde{\cal C}_+\cup\tilde{\cal C}_-}B^{-1/2}\tilde \phi(\tilde A)(i\lambda+\tilde A)^{-1}B^{-1/2}e^{i\lambda t}\left(Bu_0+u_1\right)d\lambda+{\cal O}(1/t)(\|u_0\|_{H}+\|u_1\|_H)\\
&=\tilde J_4(t)+{\cal O}(1/t)(\|u_0\|_{H}+\|u_1\|_H).
\end{split}
\end{equation}
Since $\tilde A$ is self-adjoint operator, it is a generator of an analytic semi-group and we have
$$\frac{1}{2\pi}\int_{{\cal C}_o\cup\tilde{\cal C}_+\cup\tilde{\cal C}_-}(i\lambda+\tilde A)^{-1}e^{i\lambda t}d\lambda=e^{-t\tilde A}.$$
Thus we obtain
\begin{equation}\label{es2}
\begin{split}
\tilde J_4(t)=&B^{-1/2}e^{-t\tilde A}B^{-1/2}\left(Bu_0+u_1\right)\\
&+\frac{1}{2\pi}\int_{{\cal C}_o\cup{\cal C}_+\cup{\cal C}_-}B^{-1/2}(1-\tilde \phi(\tilde A))(i\lambda+\tilde A)^{-1}B^{-1/2}e^{i\lambda t}\left(Bu_0+u_1\right)d\lambda.
\end{split}
\end{equation}
Since $(\ref{cutft})$, $1-\tilde \phi(\tilde A)=\phi(\tilde A)$ cut low frequency, $\phi(\tilde A)(i\lambda+\tilde A)^{-1}$ is analytic near the origin. Thanks to the $e^{it\lambda}$ factor, by the contour deformation across the origin, we know the latter decays exponentially. By $(\ref{asy})$, $(\ref{es1})$, $(\ref{es2})$ and density, we conclude
\begin{proposition}\label{low}
If $\phi$ cut high frequency, then there exists $C>0$ such that the following bound holds for $t>1$ and $u(t)$ which is the solution of the problem $(\ref{Cauchy})$ with the initial deta ${}^t(u_0,u_1)\in {\cal H}$
$$\|(1-\phi_B(A))u(t)- B^{-1/2}e^{-t\tilde A}B^{-1/2}\left(Bu_0+u_1\right)\|_{H}\leq C(t^{-1}(\|u_0\|_{H}+\|u_1\|_H)+ t^{-2}\|u_0\|_{H_1}).$$
\end{proposition}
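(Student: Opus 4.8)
The plan is to read the proposition off the three identities $(\ref{asy})$, $(\ref{es1})$, $(\ref{es2})$ that have just been established for sufficiently regular Cauchy data, to check that every error term produced along the way is dominated by $t^{-1}(\|u_0\|_H+\|u_1\|_H)$ or $t^{-2}\|u_0\|_{H_1}$ for $t>1$, and to remove the regularity hypothesis by a density argument. Concretely, I would first take $f={}^t(u_0,u_1)\in D({\cal A}^k)$ with $u_0\in H$ and $k$ large, so that $U(t)f$ is $C^k$ in $t$ and the Fourier--Laplace manipulations of the previous sections are legitimate; cutting to low frequencies with $\tilde\phi_B(A)=B^{-1/2}\tilde\phi(\tilde A)B^{1/2}$ yields the representation $(\ref{high eq})$ of $\tilde\phi_B(A)u(t)$ for large $t$.

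The first block of work produces $(\ref{asy})$. Splitting $R(i\lambda)(i\lambda+B)$ via Lemma $\ref{lem3.5}$ into the singular factor $\frac{1}{i\lambda}$ and a term carrying $R(i\lambda)A$, one writes $\tilde\phi_B(A)u(t)=\tilde J_1(t)+\tilde J_2(t)$; the $\frac{1}{i\lambda}u_0$-piece $\tilde J_1$ is pushed onto the arcs $\tilde{\cal C}_\pm=\{\pm s+is\}$, where $|e^{i\lambda t}|$ is exponentially small, while the $R(i\lambda)Au_0$- and $R(i\lambda)u_1$-pieces in $\tilde J_2$ have integrands of size ${\cal O}(|\lambda|^{-2}\|\widehat{\psi'Uf}(\lambda)\|_{\cal H})$ by Lemma $\ref{cutoff resolvent}$, Lemma $\ref{lem3.4}$ and Lemma $\ref{lem3.1}$, hence are integrable on $\widehat{\cal C}_\pm=\{\pm s+i\varepsilon\}$; in each case only the integral over the small contour ${\cal C}$ around the origin survives, up to ${\cal O}(e^{-\varepsilon t}\|f\|_{\cal H})$. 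Then I would replace $\psi$ by a Dirac sequence $\{\psi_n\}$ with $\psi_n'\to\delta$ and $\|\psi_n'\|_{L^1}$, $\supp\,\psi_n$ uniformly bounded, and pass to the limit under the now compact contour integral, so that $\widehat{\psi_n'Uf}(\lambda)\to\frac{1}{\sqrt{2\pi}}f$; this gives $(\ref{asy})$. I expect this limiting step to be the main obstacle, since it demands that the ${\cal O}(|\lambda|^{-2})$ gain of the high-frequency cut-off be genuinely uniform on $\widehat{\cal C}_\pm$ and that the resolvents appearing on ${\cal C}$ depend continuously on the right-hand data, so that dominated convergence applies; matching the small contours ${\cal C}_o,{\cal C}_\pm$ so that the leftover integrals are truly ${\cal O}(t^{-1})$ rather than ${\cal O}(1)$ is a secondary nuisance.

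The second block expands $\tilde J_3$ using the asymptotic resolvent identity $(\ref{asym resolvent})$ together with $(Bi\lambda+A)^{-1}=B^{-1/2}(i\lambda+\tilde A)^{-1}B^{-1/2}$: the principal term is $B^{-1/2}\tilde\phi(\tilde A)(i\lambda+\tilde A)^{-1}B^{-1/2}e^{i\lambda t}(Bu_0+u_1)$, whereas the $i\lambda u_0$-contribution and the $\lambda^2 R(i\lambda)$-correction have integrands of size ${\cal O}(|\lambda|^{-1})$ near the origin (on the Proposition $\ref{low decay}$ contour one has $\|R(i\lambda)\|_{H\to H},\ \|(i\lambda+\tilde A)^{-1}\|={\cal O}(|\lambda|^{-1})$, hence $\lambda^2 R(i\lambda)={\cal O}(|\lambda|)$ and $i\lambda(i\lambda+\tilde A)^{-1}={\cal O}(1)$), so after the ${\cal C}_o\cup{\cal C}_+\cup{\cal C}_-$ deformation they contribute ${\cal O}(t^{-1})(\|u_0\|_H+\|u_1\|_H)$; this is $(\ref{es1})$. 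For the principal term I would add back the arcs $\tilde{\cal C}_\pm$ (modulo exponentially small errors) and use $\frac{1}{2\pi}\int_{{\cal C}_o\cup\tilde{\cal C}_+\cup\tilde{\cal C}_-}(i\lambda+\tilde A)^{-1}e^{i\lambda t}\,d\lambda=e^{-t\tilde A}$, so that the principal term equals $B^{-1/2}e^{-t\tilde A}B^{-1/2}(Bu_0+u_1)$ up to the same integral with $\tilde\phi(\tilde A)$ replaced by $1-\tilde\phi(\tilde A)=\phi(\tilde A)$, which by $(\ref{cutft})$ is holomorphic near $0$ and therefore decays exponentially once the contour is deformed across the origin; this is $(\ref{es2})$. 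Combining $(\ref{asy})$, $(\ref{es1})$, $(\ref{es2})$ gives, for regular data, $\tilde\phi_B(A)u(t)=B^{-1/2}e^{-t\tilde A}B^{-1/2}(Bu_0+u_1)+{\cal O}(t^{-1})(\|u_0\|_H+\|u_1\|_H)+{\cal O}(e^{-\varepsilon t})\|f\|_{\cal H}$; since $\|f\|_{\cal H}$ supplies the factor $\|u_0\|_{H_1}$ and $e^{-\varepsilon t}\le Ct^{-2}$ for $t>1$, the asserted estimate follows, and it extends to all ${}^t(u_0,u_1)\in{\cal H}$ by density, using $\|U(t)\|_{{\cal H}\to{\cal H}}\le C(1+t)$ from $(\ref{propa})$ and the boundedness of $B^{\pm1/2}$, $\tilde\phi_B(A)$ and $e^{-t\tilde A}$.
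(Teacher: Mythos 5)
Your proposal reproduces the paper's own argument: the representation $(\ref{high eq})$, the split into $\tilde J_1+\tilde J_2$ with the contour deformations justified by Lemma $\ref{cutoff resolvent}$, the Dirac-sequence limit giving $(\ref{asy})$, the expansion of $\tilde J_3$ via $(\ref{asym resolvent})$ leading to $(\ref{es1})$--$(\ref{es2})$, and the final density step. This matches the paper's proof in both structure and the estimates invoked, so no further comment is needed.
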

From Proposition $\ref{high}$ and $\ref{low}$, we have proved Theorem ${\ref{main}}$. 
\begin{remark}\label{improve}
From the proof, we can get more sharp decay i.e. we can change $t^{-1}\|\sqrt A u_0\|_{H}$ term to $t^{-2}\|\sqrt A u_0\|_{H}$. Moreover if we apply the argument of Proposition $\ref{low exp}$, we can get this term is $t^{-\8}$ order.
\end{remark}
Next we show the optimality and an improvement of Theorem ${\ref{main}}$. The following statement says the optimality of the decay rate of Theorem ${\ref{main}}$.
\begin{proposition}\label{optimal}
Assume that $0$ belongs to the spectrum of $A$. Then
$$\limsup_{t\to\8}\sup_{\|{}^t(u_0,u_1)\|_{\cal H}\leq 1}t\| u(t)-B^{-1/2}e^{-t\tilde A}B^{1/2}u_0-B^{-1/2}e^{-t\tilde A}B^{-1/2}u_1\|_H>0$$
where $u(t)$ is the solution of the problem $(\ref{Cauchy})$.
\end{proposition}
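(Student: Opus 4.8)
The plan is to prove the lower bound directly, by producing a sequence of unit initial data (indeed with $u_1=0$) spectrally concentrated near the bottom of the spectrum, together with times at which the error is genuinely of size $\sim1/t$. Since $B^{\pm1/2}$ are bounded with bounded inverses, $0\in\sigma(A)$ forces $0\in\sigma(\tilde A)$; I work in the setting, relevant to the examples (in particular whenever $0\in\sigma_{\mathrm{ess}}(A)$), in which $0$ is a non‑isolated point of $\sigma(\tilde A)$. Then for each $\mu$ in a sequence $\mu_n\downarrow0$ of positive spectral points one may pick $w_\mu$ with $\|w_\mu\|_H=1$ and $\|(\tilde A-\mu)w_\mu\|_H\le\mu^2$, and take
$$u_0=B^{-1/2}w_\mu,\qquad u_1=0,\qquad t=\theta/\mu$$
with $\theta\in(0,1)$ fixed. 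Since $\|\sqrt Au_0\|_H^2=(AB^{-1/2}w_\mu,B^{-1/2}w_\mu)_H=(\tilde Aw_\mu,w_\mu)_H={\cal O}(\mu)$ and $\|u_0\|_H\asymp1$, the norm $\|{}^t(u_0,u_1)\|_{\cal H}$ is bounded above and below; after normalizing it suffices to bound $t\,\|u(t)-B^{-1/2}e^{-t\tilde A}B^{1/2}u_0\|_H$ below along $t=t_n$.

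The starting point is the representation of the error obtained along the way in the proof of Theorem \ref{main}: with $u_1=0$,
$$u(t)-B^{-1/2}e^{-t\tilde A}B^{1/2}u_0=Q_1(t)+Q_2(t)+{\cal O}(t^{-2})+{\cal O}(e^{-\varepsilon t}),$$
where $Q_1$ is the integral over ${\cal C}$ coming from the $i\lambda u_0$ part of $(i\lambda+B)u_0$ and $Q_2$ the one carrying the factor $\lambda^2R(i\lambda)$; there these were only bounded by ${\cal O}(1/t)$, but here I need their precise leading behavior. I would extract it by pairing the error with the test vector $B^{1/2}w_\mu$: this collapses the factors $B^{\pm1/2}$ and produces the scalar $(B^{-1}w_\mu,w_\mu)_H$, which is bounded below by $\|B\|^{-1}>0$.

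Concretely, using the scalar residue identity $\frac1{2\pi}\int_{\cal C}\frac{i\lambda\,e^{i\lambda t}}{i\lambda+x}\,d\lambda=-xe^{-xt}+{\cal O}(e^{-ct})$ (close ${\cal C}$ into the upper half–plane as in the derivation of $e^{-t\tilde A}$) one gets $Q_1=-B^{-1/2}\tilde Ae^{-t\tilde A}\tilde\phi(\tilde A)B^{-1}w_\mu+{\cal O}(e^{-\varepsilon t})$, hence $(Q_1,B^{1/2}w_\mu)_H=-\mu e^{-\mu t}(B^{-1}w_\mu,w_\mu)_H+{\cal O}(\mu^2)$; here one uses that $w_\mu$ is spectrally localized at $\mu\ (<\varepsilon)$, so the smooth function $x\mapsto x e^{-xt}\tilde\phi(x)$ acts on it essentially as the scalar $\mu e^{-\mu t}$. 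For $Q_2$ one writes $R(i\lambda)=B^{-1/2}(i\lambda+\tilde A-\lambda^2B^{-1})^{-1}B^{-1/2}$, expands in powers of $\lambda$, and pairs with $B^{1/2}w_\mu$; the leading term is governed by the double–pole computation $\frac1{2\pi}\int_{\cal C}\frac{\lambda^2e^{i\lambda t}}{(i\lambda+\mu)^2}\,d\lambda=\mu(2-\mu t)e^{-\mu t}+{\cal O}(e^{-ct})$, the other contributions being ${\cal O}(t^{-2})$, so $(Q_2,B^{1/2}w_\mu)_H=\mu(2-\mu t)e^{-\mu t}(B^{-1}w_\mu,w_\mu)_H+{\cal O}(t^{-2})$. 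Adding, the critical $1/t$–pieces combine into $\mu(1-\mu t)e^{-\mu t}(B^{-1}w_\mu,w_\mu)_H$, and at $t=\theta/\mu$ this equals $(1-\theta)e^{-\theta}(B^{-1}w_\mu,w_\mu)_H\,\mu+{\cal O}(\mu^2)$, of modulus $\ge c_\theta\,\mu$ with $c_\theta>0$ since $\theta\ne1$. Therefore $\|u(t)-B^{-1/2}e^{-t\tilde A}B^{1/2}u_0\|_H\ge|(u(t)-B^{-1/2}e^{-t\tilde A}B^{1/2}u_0,B^{1/2}w_\mu)_H|/\|B^{1/2}w_\mu\|_H\gtrsim\mu\asymp1/t$ along $t=t_n$, which gives the claim.

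The genuinely delicate point — the main obstacle — is that $Q_2$ decays at exactly the same rate $1/t$ as $Q_1$, so a mere bound on $Q_2$ is useless: one must compute the leading coefficients of both and verify that their combination, of the form $-1+(2-\mu t)=1-\mu t$, does not vanish; this is precisely why one evaluates at the special times $t=\theta/\mu$ with $\theta\ne1$ rather than at $t\sim1/\mu$ with an unspecified constant. The non‑commutativity of $A$ and $B$, through the conjugation $\tilde A=B^{-1/2}AB^{-1/2}$, is what makes the bookkeeping subtle — spectral data adapted to $\tilde A$ become entangled with $B^{\pm1/2}$ — and the test vector $B^{1/2}w_\mu$ is chosen exactly so that all of these factors reduce to the single scalar $(B^{-1}w_\mu,w_\mu)_H$, uniformly bounded away from $0$; in the constant–damping situation of \cite{Ch-Ha}, where everything commutes and $B=Id$, this difficulty is absent.
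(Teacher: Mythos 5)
Your proposal is correct and rests on the same mechanism as the paper's proof: spectrally localize near the bottom of $\sigma(\tilde A)$ at a positive spectral point $\mu\to0$, pair the error with a test vector built from $B^{\pm1/2}$ so that all conjugations collapse to the single scalar $(B^{-1}w_\mu,w_\mu)_H\geq\|B\|^{-1}$, reduce the contour integrals to scalar residue computations, and evaluate at $t\sim1/\mu$. The one substantive difference is your choice of data. The paper takes $u_0=0$ and $u_1^n=B^{1/2}f_n$, so the term coming from $i\lambda u_0$ is absent and only the $\lambda^2R(i\lambda)$ remainder contributes at order $1/t$; the residue $\frac{1}{2\pi}\int\lambda^2e^{i\lambda t}(i\lambda+a)^{-2}d\lambda=a(2-at)e^{-at}$ is then manifestly $\asymp1/t$ at $t=1/a$, with no cancellation to worry about. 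You instead take $u_1=0$ and $u_0=B^{-1/2}w_\mu$, which forces you to compute \emph{two} competing $1/t$ contributions, $-\mu e^{-\mu t}$ and $\mu(2-\mu t)e^{-\mu t}$, whose sum $\mu(1-\mu t)e^{-\mu t}$ vanishes at $t=1/\mu$ — hence your need for $t=\theta/\mu$ with $\theta\neq1$. Your residue computations and the handling of the operator-to-scalar approximation are correct, so this works, but the ``main obstacle'' you identify is self-inflicted and disappears with the paper's choice of data. Two small points: your restriction to the case where $0$ is a non-isolated point of $\sigma(\tilde A)$ is in fact automatic here, since $A>0$ (so $0$ is not an eigenvalue) and an isolated spectral point of a self-adjoint operator is always an eigenvalue — it would be worth saying so rather than presenting it as an extra hypothesis; and in the $Q_1$ step the functional calculus acts on $B^{-1}w_\mu$, not on $w_\mu$, so you should move $\tilde Ae^{-t\tilde A}\tilde\phi(\tilde A)$ onto the spectrally localized factor by self-adjointness before replacing it by the scalar $\mu e^{-\mu t}$ (and similarly use $((i\lambda+\tilde A)^{-1})^*=(-i\bar\lambda+\tilde A)^{-1}$ in the $Q_2$ step), exactly as the paper does with its sequence $f_n$.
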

Theorem ${\ref{main}}$ is an uniform estimate and for individual solution, we can obtain the following result.
\begin{proposition}\label{individual}
For each solution $u(t)$ of the problem $(\ref{Cauchy})$ with ${}^t(u_0,u_1)\in {\cal H}$, we have
$$\lim_{t\to\8}t\| u(t)-B^{-1/2}e^{-t\tilde A}B^{1/2}u_0-B^{-1/2}e^{-t\tilde A}B^{-1/2}u_1\|_H=0.$$
\end{proposition}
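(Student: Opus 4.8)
The plan is to sharpen, not merely to invoke, the proof of Theorem \ref{main}. Write $f={}^t(u_0,u_1)\in\mathcal H$ and
\[
\mathcal E(t)f:=u(t)-B^{-1/2}e^{-t\tilde A}B^{1/2}u_0-B^{-1/2}e^{-t\tilde A}B^{-1/2}u_1 .
\]
Going back through the proof of Theorem \ref{main}, the high–frequency piece $\phi_B(A)u(t)$ is $O(t^{-2})\|f\|_{\mathcal H}$ by Proposition \ref{high}, the term $\tilde J_4(t)$ reproduces the full asymptotic profile up to an exponentially small error (this is what $(\ref{es1})$ and $(\ref{es2})$ establish), and the only remaining contributions — precisely those responsible for the $t^{-1}$ in Proposition \ref{low} — are the two correction integrals
\begin{gather*}
K_1(t)=\tfrac1{2\pi}\int_{\mathcal C}B^{-1/2}\tilde\phi(\tilde A)(i\lambda+\tilde A)^{-1}B^{-1/2}(i\lambda u_0)\,e^{i\lambda t}\,d\lambda,\\
K_2(t)=\tfrac1{2\pi}\int_{\mathcal C}B^{-1/2}\tilde\phi(\tilde A)(i\lambda+\tilde A)^{-1}B^{-1/2}\,\lambda^2 R(i\lambda)\bigl[(i\lambda+B)u_0+u_1\bigr]e^{i\lambda t}\,d\lambda
\end{gather*}
making up $\tilde J_3(t)-\tilde J_4(t)$. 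Thus $\mathcal E(t)f=K_1(t)+K_2(t)+O(t^{-2})\|f\|_{\mathcal H}$, first for regular data and then, since every operator in sight is bounded uniformly in $t$, for all $f\in\mathcal H$ by the density argument already used there.

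The second step is to evaluate $K_1$ and $K_2$. For $K_1$, use $i\lambda(i\lambda+\tilde A)^{-1}=Id-\tilde A(i\lambda+\tilde A)^{-1}$: the $Id$–part leaves $\int_{\mathcal C}e^{i\lambda t}\,d\lambda=O(e^{-\varepsilon t})$ (close the contour upward), while $\tilde\phi(\tilde A)\tilde A$ is bounded (because $x\tilde\phi(x)$ is bounded on $[0,\infty)$), so the semigroup representation $\tfrac1{2\pi}\int_{\mathcal C_o\cup\tilde{\mathcal C}_+\cup\tilde{\mathcal C}_-}(i\lambda+\tilde A)^{-1}e^{i\lambda t}\,d\lambda=e^{-t\tilde A}$ together with the usual contour deformations (all with $O(e^{-\varepsilon t})$ errors) gives $K_1(t)=-B^{-1/2}\tilde\phi(\tilde A)\,\tilde A e^{-t\tilde A}\,B^{-1/2}u_0+O(e^{-\varepsilon t}\|u_0\|_H)$. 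For $K_2$, I would first isolate the leading behaviour of $\lambda^2 R(i\lambda)$ near $\lambda=0$: from $R(i\lambda)=B^{-1/2}(\tilde A+i\lambda-\lambda^2 B^{-1})^{-1}B^{-1/2}$ and a Schur–complement computation relative to $H=\ker\tilde A\oplus(\ker\tilde A)^{\perp}$ — using $B\ge cId$ and the bounded invertibility of $\tilde A$ on $(\ker\tilde A)^{\perp}$ near the bottom of its spectrum — one gets $(\tilde A+i\lambda-\lambda^2 B^{-1})^{-1}=\tfrac1{i\lambda}P_0+O(1)$, with $P_0$ the orthogonal projection onto $\ker\tilde A$, hence $\lambda^2 R(i\lambda)=-i\lambda\,B^{-1/2}P_0 B^{-1/2}+O(\lambda^2)$. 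Substituting this, the $O(\lambda^2)$ remainder integrates to $O(t^{-2})(\|u_0\|_H+\|u_1\|_H)$, while the main term is handled exactly as $K_1$, yielding $K_2(t)=B^{-1/2}\tilde\phi(\tilde A)\,\tilde A e^{-t\tilde A}\,w_0+O(t^{-2})(\|u_0\|_H+\|u_1\|_H)$ with the fixed vector $w_0:=B^{-1}P_0(B^{1/2}u_0+B^{-1/2}u_1)\in\ker A\subset H$.

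Combining the two, $\mathcal E(t)f=B^{-1/2}\tilde\phi(\tilde A)\,\tilde A e^{-t\tilde A}\,\zeta+O(t^{-2})\|f\|_{\mathcal H}$ with $\zeta:=w_0-B^{-1/2}u_0\in H$ depending only on $f$. Now $B^{-1/2}\tilde\phi(\tilde A)$ is bounded on $H$, and by the spectral theorem $t\tilde A e^{-t\tilde A}\to0$ strongly on $H$ as $t\to\infty$: the functions $x\mapsto txe^{-tx}$ are bounded by $e^{-1}$, vanish at $x=0$, and tend to $0$ for every $x>0$, so dominated convergence applies in the spectral measure of $\tilde A$ at $\zeta$. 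Hence $t\|\mathcal E(t)f\|_H\le\|B^{-1/2}\tilde\phi(\tilde A)\|_{H\to H}\,\|t\tilde A e^{-t\tilde A}\zeta\|_H+O(t^{-1})\|f\|_{\mathcal H}\to0$, which is the assertion. I expect the main obstacle to be the second step — extracting the exact $\tfrac1{i\lambda}$–singularity of $\lambda^2 R(i\lambda)$ at the origin and checking that $K_1$ and $K_2$ both collapse to the common shape (bounded operator)$\,\circ\,(\tilde A e^{-t\tilde A})\,\circ\,$(bounded operator); everything then rests on the elementary but decisive fact — also the reason the rate in Theorem \ref{main} cannot be improved uniformly, cf.\ Proposition \ref{optimal} — that $t\tilde A e^{-t\tilde A}$ is uniformly bounded yet converges to $0$ only strongly, never in operator norm.
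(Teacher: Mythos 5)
Your overall architecture is the same as the paper's: the same reduction of the error to the two correction integrals $K_1,K_2$ (the paper's $\tilde J_3$ minus its heat-semigroup part, plus the $O(t^{-2})$ high-frequency contribution), the same evaluation of $K_1$ via $i\lambda(i\lambda+\tilde A)^{-1}=Id-\tilde A(i\lambda+\tilde A)^{-1}$ yielding $-B^{-1/2}\tilde A e^{-t\tilde A}B^{-1/2}u_0$ modulo exponentially small terms, and the same closing step, namely that $t\tilde A e^{-t\tilde A}\zeta\to 0$ for each fixed $\zeta$ by the spectral theorem. The $K_1$ part and the final step are sound.

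The gap is exactly where you flagged it, in $K_2$, and it is fatal as written. The expansion $(\tilde A+i\lambda-\lambda^2B^{-1})^{-1}=\tfrac{1}{i\lambda}P_0+O(1)$ presupposes that $0$ is either outside $\sigma(\tilde A)$ or an isolated point of it, so that $\tilde A$ is boundedly invertible on $(\ker\tilde A)^{\perp}$. No such spectral gap is assumed, and it fails in the model case $A=-\Delta$ on $L^2(\R^d)$: there $\ker\tilde A=\{0\}$, so $P_0=0$ and your formula would give $\lambda^2R(i\lambda)=O(\lambda^2)$, whereas in fact $\|R(i\lambda)\|_{H\to H}\asymp|\lambda|^{-1}$ near the origin; this is sharp whenever $0\in\sigma(A)$, which by Proposition \ref{optimal} is precisely the nontrivial case. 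Hence $\lambda^2R(i\lambda)$ is only $O(|\lambda|)$, the $K_2$-integrand is only $O(1)$ on ${\cal C}$, and contour deformation recovers only the uniform $O(t^{-1})$ of Proposition \ref{low}, not the required $o(t^{-1})$. The paper closes this differently: it substitutes $R(i\lambda)=B^{-1/2}(i\lambda+\tilde A)^{-1}B^{-1/2}+O(|\lambda|^{-1})\cdot O(|\lambda|^{2})\cdot O(|\lambda|^{-1})$ via $(\ref{asym resolvent})$ (the error term then contributes $O(t^{-2})$), and splits the fixed datum with the low-frequency cutoffs $\phi_n(\tilde A)=\phi(n\tilde A)$: on the range of $\phi_n(\tilde A)$ the inner factor $(i\lambda+\tilde A)^{-1}$ is bounded by $C_n$ near the origin, so that piece is $O_n(|\lambda|)$ and gives $C_nt^{-2}$, while the $(1-\phi_n(\tilde A))$ piece is bounded by $Ct^{-1}\|(1-\phi_n(\tilde A))B^{-1/2}(Bu_0+u_1)\|_H\le\varepsilon t^{-1}$ for $n$ large, since $\phi_n(\tilde A)v\to v$ strongly. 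In other words, the strong-but-not-norm convergence argument you apply only at the very end must also be applied inside $K_2$; replacing it by a Schur-complement expansion at $\lambda=0$ is not available under the stated hypotheses.
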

\begin{proof}[Proof of Proposition $\ref{individual}$] 
From the proof of Theorem ${\ref{main}}$, the main part of $u(t)$ is $\tilde J_3(t)$. Since $(i\lambda+\tilde A)^{-1} B^{-1/2}\lambda^2R(i\lambda)i\lambda={\cal O}(|\lambda|)$ on ${\cal C}$, we can apply the same argument as in the proof of Proposition $\ref{low decay}$ and $\ref{high}$, this part is negligible in $\tilde J_3$. So we have
\begin{align*}
I(t)&=u(t)-B^{-1/2}e^{-t\tilde A}B^{1/2}u_0-B^{-1/2}e^{-t\tilde A}B^{-1/2}u_1\\
&=\frac{1}{2\pi}\int_{{\cal C}}B^{-1/2}\tilde \phi(\tilde A)(i\lambda+\tilde A)^{-1}B^{-1/2}e^{i\lambda t}\left(i\lambda u_0\right)d\lambda\\
\ &+\frac{1}{2\pi}\int_{{\cal C}}B^{-1/2}\tilde \phi(\tilde A)(i\lambda+\tilde A)^{-1} B^{-1/2}\lambda^2R(i\lambda)e^{i\lambda t}\left(B u_0+u_1\right)d\lambda+{\cal O}(t^{-2}\|f\|_{\cal H}).
\end{align*}
For latter integral, we apply $(\ref{asym resolvent})$. Since $R(i\lambda)$ and $(Bi\lambda+A)^{-1}$ are ${\cal O}(|\lambda|^{-1})$ on ${\cal C}$, we can neglect the remainder term. For former integral, we apply the similar argument to get heat type asymptotic behavior. We obtain 
\begin{align*}
I(t)&=-B^{-1/2}\tilde Ae^{-{\tilde A}t }B^{-1/2} u_0+\frac{1}{2\pi}\int_{{\cal C}}B^{-1/2}\tilde \phi(\tilde A)(i\lambda+\tilde A)^{-1} B^{-1}\lambda^2(i\lambda+\tilde A)^{-1}e^{i\lambda t}B^{-1/2}\left(B u_0+u_1\right)d\lambda\\
&+{\cal O}(t^{-2}\|f\|_{\cal H}).
\end{align*}
For $\phi$ satisfying $(\ref{cutft})$, we define
$\phi_n(\tilde A)=\phi(n \tilde A)$. Then by the spectral theorem, we have 
$$\phi_n(\tilde A) u\to u \ \text{ in } H \ \text{ as } n\to \8.$$
We  decompose $I(t)$ as follows
\begin{align*}
I(t)&=-B^{-1/2}\tilde Ae^{-{\tilde A}t }B^{-1/2} u_0\\
&\quad +\frac{1}{2\pi}\int_{{\cal C}}B^{-1/2}\tilde \phi(\tilde A)(i\lambda+\tilde A)^{-1} B^{-1}\lambda^2(i\lambda+\tilde A)^{-1}e^{i\lambda t}\phi_n(\tilde A) B^{-1/2}\left(B u_0+u_1\right)d\lambda\\
&\quad +\frac{1}{2\pi}\int_{{\cal C}}B^{-1/2}\tilde \phi(\tilde A)(i\lambda+\tilde A)^{-1} B^{-1}\lambda^2(i\lambda+\tilde A)^{-1}e^{i\lambda t}(1-\phi_n(\tilde A))B^{-1/2}\left(B u_0+u_1\right)d\lambda\\
&\quad +{\cal O}(t^{-2}\|f\|_{\cal H}).
\end{align*}
Since $\phi_n(\tilde A)$ cut low frequency, $(i\lambda+\tilde A)\phi_n(\tilde A)$ is bounded near the origin. So in the first integral, the integrand is $|\lambda|$ order and can be estimate $C_nt^{-2}$ as in the proof of Proposition $\ref{low decay}$ and $\ref{high}$. 
The second integral can be estimated as in the estimate of $(\ref{es1})$ and we get uniform $t^{-1}$ estimate. Since $(1-\phi_n(\tilde A))B^{-1/2}\left(B u_0+u_1\right) \to 0$ in $H$ as $n\to H$. By letting $n$ sufficiently large, this part decays $\varepsilon t^{-1}$ order for any $\varepsilon>0$. Now we recall the fact,
$t\tilde Ae^{-\tilde At }u \to 0$ as $t\to\8$, for individual $u$. This fact is easily observed from the spectral theory. Thus we have proved.  
\end{proof}

\begin{proof}[Proof of Proposition $\ref{optimal}$] 
For simplicity, we can assume $u_0=0$. From the proof of Proposition $\ref{individual}$, we have
\begin{align*}
&u(t)-B^{-1/2}e^{-t\tilde A}B^{-1/2}u_1\\
&=\frac{1}{2\pi}\int_{{\cal C}}B^{-1/2}\tilde \phi(\tilde A)(i\lambda+\tilde A)^{-1} B^{-1/2}\lambda^2 B^{-1/2}(i{\lambda}+\tilde A)^{-1}B^{-1/2}e^{i\lambda t}u_1d\lambda+{\cal O}(t^{-2}\|u_1\|_{ H}).
\end{align*}
So we get
\begin{align*}
&(u(t)-B^{-1/2}e^{-t\tilde A}B^{-1/2}u_1,u_1)_H\\
&=\frac{1}{2\pi}\int_{{\cal C}}(B^{-1}\lambda^2e^{i\lambda t}(i{\lambda}+\tilde A)^{-1}B^{-1/2}u_1,\tilde \phi(\tilde A)(i\overline{\lambda}+\tilde A)^{-1} B^{-1/2}u_1 )_Hd\lambda+{\cal O}(t^{-2}\|u_1\|^2_{ H}).
\end{align*}
For $a\in \sigma (\tilde A)\cap \supp \ \tilde \phi$, we take a sequence of functions $f_n$ such that $\psi_n(\tilde A) f_n=f_n$ and $\|f_n\|_H=1$. Here 
$$\psi_n(x)=\begin{cases}
1, \ \ \ \ \  |x-a|\leq 1/n,\\
0, \ \ \ \ \ |x-a|>1/n.
\end{cases}
$$
We take $u_1^n=B^{1/2}f_n$, then
$$\frac{1}{C}=\frac{1}{C}\|B^{-1/2}u_1^n\|_H\leq \|B^{-1}u_1^n\|_{H}\leq C\|B^{-1/2}u_1^n\|_H=C.$$
So if necessary, we take a subsequence and we can assume $\|B^{-1}u_1^n\|_{H}$ converge to $\beta_a\geq 1/C$ as $n$ tends to infinity. We also note $\|A f_n\|_H$ is uniformly bounded so from the continuity of $B^{1/2}$ and interpolation, $\|u_1^n\|_{H_1}$ is  uniformly bounded.  
For $\lambda\in {\cal C}$, we easily see
$$(i{\lambda}+\tilde A)^{-1}B^{-1/2}u_1^n-(i{\lambda}+a)^{-1}B^{-1/2}u_1^n\to 0$$
in $H$ as $n$ tends to infinity and the convergence is locally uniform. We have
\begin{align*}
\lim_{n\to\8} (u^n(t)-B^{-1/2}e^{-t\tilde A}B^{-1/2}u_1^n,u_1^n)_H=\beta_a\frac{1}{2\pi}\int_{{\cal C}}\lambda^2e^{i\lambda t}(i{\lambda}+a)^{-2}  d\lambda+{\cal O}(t^{-2}).
\end{align*}
In the above integral, we can change $\cal C$ to a simple closed contour $\tilde{\cal C}$ by adding a contour with exponential error terms. 
So we have 
\begin{align*}
\lim_{n\to\8} (u^n(t)-B^{-1/2}e^{-t\tilde A}B^{-1/2}u_1^n,u_1^n)_H&=i\beta_a\frac{1}{2\pi} \int_{\tilde{\cal C}}\lambda^2e^{i\lambda t}\pl_\lambda(i{\lambda}+a)^{-1} d\lambda+{\cal O}(t^{-2})\\
&=-\beta_a \frac{1}{2\pi i }\int_{\tilde{\cal C}}( 2i \lambda e^{i\lambda t}-\lambda^2t e^{i\lambda t} ) (i{\lambda}+a)^{-1}d(i\lambda)+{\cal O}(t^{-2})\\
&=\beta_a(2a e^{-at}-a^2t e^{-a t})+{\cal O}(t^{-2}).
\end{align*}
Taking $t= 1/a$ and $a\to 0$. Since $\beta_a \geq 1/C$, we obtain this integral does not decay faster than $t^{-1}$ order. So we have proved Proposition $\ref{optimal}$.
\end{proof}

\end{section}

\begin{section}{Proof of Theorem {\ref{main 2}}}
 Here we study the asymptotic expansion of the propagator for proving Theorem ${\ref{main 2}}$. In this section, we assume $H=L^2$.
We apply the identity $(\ref{asym resolvent})$ to $(\ref{asy})$ and get 
\begin{equation}
\begin{split}
\tilde \phi_B(A)u(t)=&\frac{1}{2\pi}\int_{{\cal C}}B^{-1/2}\tilde \phi(\tilde A)(i\lambda+\tilde A)^{-1}B^{-1/2}e^{i\lambda t}\left(Bu_0+u_1\right)d\lambda
\\
&\quad +\frac{1}{2\pi}\int_{{\cal C}}B^{-1/2}\tilde \phi(\tilde A)(i\lambda+\tilde A)^{-1}B^{-1/2}e^{i\lambda t}\left(i\lambda u_0\right)d\lambda\\
&\quad +\frac{1}{2\pi}\int_{{\cal C}}B^{-1/2}\tilde \phi(\tilde A)(i\lambda+\tilde A)^{-1}B^{-1/2}R(i\lambda)e^{i\lambda t}\left(Bu_0+u_1\right)d\lambda
\\
&\quad +\frac{1}{2\pi}\int_{{\cal C}}B^{-1/2}\tilde \phi(\tilde A)(i\lambda+\tilde A)^{-1}B^{-1/2}R(i\lambda)e^{i\lambda t}\left(i\lambda u_0\right)d\lambda+{\cal O}(e^{-\varepsilon t}\|f\|_{\cal H})\\
&=I+{\tilde I}+ R_1+\tilde R_1+{\cal O}(e^{-\varepsilon t}\|f\|_{\cal H})=I+{\tilde I}+R+{\cal O}(e^{-\varepsilon t}\|f\|_{\cal H}).
\end{split}
\end{equation}
$R=R_1+\tilde R_1$ is the reminder term and ${\cal C}$ is a contour around the origin as in the proof of Proposition $\ref{low decay}$. We have estimated $I$ in previous section and get heat type asymptotic behavior modulo exponential decay. 
\begin{equation}
I=B^{-1/2}e^{-t\tilde A}B^{-1/2} (Bu_0+u_1)+{\cal O}(e^{-\varepsilon t}\|f\|_{\cal H}).
\end{equation}
So we shall estimate $\tilde I$ and $R$. Since $(\lambda+\tilde A)^{-1}(\lambda+\tilde A)=Id$, we have
$$\tilde I=\frac{1}{2\pi} \int_{{\cal C}}B^{-1/2}\tilde \phi(\tilde A)B^{-1/2}e^{i\lambda t} u_0d\lambda-\frac{1}{2\pi}\int_{{\cal C}}B^{-1/2}\tilde \phi(\tilde A)\tilde A (i\lambda+\tilde A)^{-1}B^{-1/2}e^{i\lambda t} u_0d\lambda$$
In the former integral, the integrand is holomorphic. So by the contour deformation across the origin, we get exponential decay from $e^{it\lambda}$ term. In the latter integral, we can apply similar argument for proving heat type asymptotic behavior and obtain the following asymptotics
\begin{equation}\label{tilde I}
\tilde I=-B^{-1/2}\tilde A e^{-t\tilde A}B^{-1/2} u_0 +{\cal O}(e^{-\varepsilon t}\|f\|_{\cal H}).
\end{equation}
By the assumption, we have the bound
$$e^{-t/2\tilde A}: L^1 \to L^2={\cal O}(t^{-m}).$$
Since $xe^{-x},\ x\geq 0$ is bounded, $tAe^{-tA}$ is bounded. So we have
$$\tilde A e^{-t/2\tilde A}:L^2 \to L^2= {\cal O}(t^{-1}).$$
$B^{-1/2}$ is also a bounded operator on $L^1$ so if $u_0\in L^1$, using the identity 
$\tilde A e^{-t\tilde A}= \tilde A e^{-t/2\tilde A} e^{-t/2\tilde A}$, we obtain
\begin{equation}
\|\tilde I\|_{L^2}={\cal O}(t^{-m-1} (\|u_0\|_{L^1}+\|f\|_{\cal H})).
\end{equation}
Next we study the remainder term $R$. Since $\lambda^2B^{-1/2}(i\lambda+\tilde A)^{-1}B^{-1/2}: H\to H ={\cal O}(|\lambda|)$ on the contour, using  $(\ref{asym resolvent})$ repeatedly, we obtain, for sufficiently large $N$ 
\begin{align*}
R=&\frac{1}{2\pi}\int_{{\cal C}}e^{i\lambda t}B^{-1/2}\tilde \phi(\tilde A)(i\lambda+\tilde A)^{-1}B^{-1}\lambda^2(i\lambda+\tilde A)^{-1}B^{-1/2}\left(Bu_0+u_1\right)d\lambda\\
&+\frac{1}{2\pi}\int_{{\cal C}}e^{i\lambda t}B^{-1/2}\tilde \phi(\tilde A)(i\lambda+\tilde A)^{-1}B^{-1}\lambda^2(i\lambda+\tilde A)^{-1}B^{-1/2}\ i\lambda u_0\ d\lambda\\
&+\frac{1}{2\pi}\int_{{\cal C}}e^{i\lambda t}B^{-1/2}\tilde \phi(\tilde A)(i\lambda+\tilde A)^{-1}B^{-1}\lambda^2(i\lambda+\tilde A)^{-1}B^{-1}\lambda^2(i\lambda+\tilde A)^{-1}B^{-1/2}\left(Bu_0+u_1\right)d\lambda\\
&+\cdots+\int_{{\cal C}}e^{i\lambda t}{\cal O}(|\lambda|^N)\|f\|_{\cal H}d\lambda
=\frac{1}{2\pi}K_1+\frac{1}{2\pi}\tilde K_1+\frac{1}{2\pi}K_2+\cdots+\frac{1}{2\pi}K_N+\frac{1}{2\pi}\tilde K_N+{\cal O}(t^{-N-1})\|f\|_{\cal H}.
\end{align*}
For the latter estimate, we apply same contour deformation as in Proposition $\ref{low decay}$ by using the estimate ${\cal O}(|\lambda|^N)$ on the contour. Here
\begin{align*}
K_n=&\int_{{\cal C}}e^{i\lambda t}B^{-1/2}\tilde \phi(\tilde A)(i\lambda+\tilde A)^{-1}B^{-1/2}(\lambda^2B^{-1/2}(i\lambda+\tilde A)^{-1}B^{-1/2}))^n\left(Bu_0+u_1\right)d\lambda,\\
{\tilde K}_n=&\int_{{\cal C}}e^{i\lambda t}B^{-1/2}\tilde \phi(\tilde A)(i\lambda+\tilde A)^{-1}B^{-1/2}(\lambda^2B^{-1/2}(i\lambda+\tilde A)^{-1}B^{-1/2}))^n\ i\lambda u_0\ d\lambda.
\end{align*}
First we estimate $K_1$ and $\tilde K_1$.
\begin{proposition}\label{K1}
There exists a constant $C>0$ such that the following estimates hold for any $t>1$ and $u_0\in L^1 \cap D(\sqrt A)$, $u_1\in L^2\cap L^1$ 
\begin{align*}
&\|K_1\|_{L^2}\leq C(t^{-m-1} (\|u_0\|_{L^1\cap L^2}+\|u_1)\|_{L^1\cap L^2}),\\
&\|\tilde K_1\|_{L^2}\leq C(t^{-m-2} (\|u_0\|_{L^1\cap L^2}+\|u_1\|_{L^1\cap L^2}).
\end{align*}
\end{proposition}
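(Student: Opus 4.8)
The plan is to turn the contour integral defining $K_1$ (and, with an extra factor $i\lambda$, $\tilde K_1$) into an explicit heat--semigroup expression modulo exponentially small errors, and then to estimate the resulting operator from $L^1\cap L^2$ to $L^2$ using the diffusion bound $(\ref{diff})$, the $L^1$--bounds $(\ref{bddsemi})$, $(\ref{B})$, and the elementary analytic smoothing $\|\tilde A^k e^{-s\tilde A}\|_{L^2\to L^2}={\cal O}(s^{-k})$, valid since $\tilde A$ is nonnegative self-adjoint.

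First I would do the same contour surgery used for the terms $I$ and $\tilde I$ above: add to ${\cal C}$ the Hankel tails $\tilde{\cal C}_{\pm}=\{\pm s+is:\ s\ge\delta\}$. Since $\lambda^{2}(i\lambda+\tilde A)^{-1}B^{-1}(i\lambda+\tilde A)^{-1}={\cal O}(1)$ on $\tilde{\cal C}_{\pm}$ (using $\|(i\lambda+\tilde A)^{-1}\|_{L^2\to L^2}={\cal O}(|\lambda|^{-1})$ there) and $|e^{i\lambda t}|\le e^{-st}$, this costs only ${\cal O}(e^{-\varepsilon t})(\|u_0\|_{L^2}+\|u_1\|_{L^2})$. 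On the resulting contour, which winds once around $i\,\sigma(\tilde A)\subset i[0,\infty)$, I would then use the spectral resolution $\tilde A=\int a\,dE(a)$, Fubini, and the residue theorem for the scalar integral $\frac{1}{2\pi}\int\frac{\lambda^{2}e^{i\lambda t}}{(i\lambda+a)(i\lambda+a')}\,d\lambda$ (poles at $\lambda=ia$, $\lambda=ia'$) to arrive at
\[
K_1=c\,B^{-1/2}\tilde\phi(\tilde A)\,\Omega(t)\,B^{-1/2}(Bu_0+u_1)+{\cal O}(e^{-\varepsilon t})(\|u_0\|_{L^2}+\|u_1\|_{L^2}),\qquad \Omega(t)=\iint g_t(a,a')\,dE(a)\,B^{-1}\,dE(a'),
\]
where $g_t(a,a')=(a^{2}e^{-at}-a'^{2}e^{-a't})/(a-a')$ is the first divided difference of $x\mapsto x^{2}e^{-xt}$; for $\tilde K_1$ the same procedure gives (a constant times) the divided difference of $x\mapsto x^{3}e^{-xt}$.

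Next I would expand the symbol: writing $g_t(a,a')=\int_0^1(2x_\theta-tx_\theta^{2})e^{-x_\theta t}\,d\theta$ with $x_\theta=(1-\theta)a+\theta a'$, expanding $e^{-x_\theta t}=e^{-(1-\theta)at}e^{-\theta a't}$ and $x_\theta^{k}$ by the binomial theorem, $\Omega(t)$ becomes a finite sum of terms $\int_0^1 c_j(\theta)\,t^{\,j}\,(\tilde A^{p}e^{-(1-\theta)t\tilde A})\,B^{-1}\,(\tilde A^{q}e^{-\theta t\tilde A})\,d\theta$ with $j\in\{0,1\}$ and $p+q\in\{1,2\}$ (for $\tilde K_1$: $p+q\in\{2,3\}$), where $|c_j(\theta)|$ is bounded and carries the binomial weights $(1-\theta)^{p}\theta^{q}$. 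Each such term is estimated in $L^2$ by chaining the bounds $\|e^{-s\tilde A}\|_{L^1\to L^1}={\cal O}(1)$, $\|e^{-s\tilde A}\|_{L^1\to L^2}={\cal O}(s^{-m})$, boundedness of $B^{\pm1/2}$ (hence $B^{-1}$) on $L^1$ and $L^2$, $\|\tilde A^{k}e^{-s\tilde A}\|_{L^2\to L^2}={\cal O}(s^{-k})$, and --- splitting $e^{-s\tilde A}=e^{-s\tilde A/2}e^{-s\tilde A/2}$ --- $\|\tilde A^{k}e^{-s\tilde A}\|_{L^1\to L^2}={\cal O}(s^{-m-k})$. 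Splitting the $\theta$--integral at $\tfrac12$, on each half the ``long'' semigroup has parameter $\gtrsim t$; one runs the $L^1\to L^2$ gain through it, while the binomial weight $(1-\theta)^{p}$ (resp.\ $\theta^{q}$) exactly compensates the $L^2\to L^2$ blow-up of the ``short'' factor $\tilde A^{p}e^{-(1-\theta)t\tilde A}$ (resp.\ $\tilde A^{q}e^{-\theta t\tilde A}$). The outcome is ${\cal O}(t^{\,j-p-q-m})={\cal O}(t^{-m-1})$ for $K_1$ and ${\cal O}(t^{-m-2})$ for $\tilde K_1$; since $\|B^{1/2}u_0+B^{-1/2}u_1\|_{L^1}\le C(\|u_0\|_{L^1}+\|u_1\|_{L^1})$ by $(\ref{B})$ and the $L^2$ factors are immediate, this yields the claimed estimates.

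I expect the main obstacle to be the summands in which a power of $\tilde A$ sits against the \emph{short}--time semigroup (e.g.\ $\int_0^1 c_j(\theta)t^{j}\tilde A^{p}e^{-(1-\theta)t\tilde A}B^{-1}\tilde A^{q}e^{-\theta t\tilde A}\,d\theta$ with $q\ge1$, near $\theta=0$): there $e^{-\theta t\tilde A}$ supplies essentially no smoothing, $\tilde A^{q}$ is unbounded, and one cannot extract $\tilde A^{q}$--regularity from $L^1$ data. The remedy is to commute each such $\tilde A$ to the left --- past $e^{-\theta t\tilde A}$, and past $B^{-1}$, the latter being admissible because $B^{\pm1/2}$ are of order zero relative to $A$ (the hypotheses $\|AB^{\pm1/2}u\|_H\le C(\|u\|_H+\|Au\|_H)$ on $D(A)$), with the resulting commutators of lower order and acting on the already smoothed $e^{-(1-\theta)t\tilde A}v$ --- so that every $\tilde A$ ends up adjacent to the long--time semigroup and is absorbed via $\|\tilde A^{k}e^{-s\tilde A}\|_{L^1\to L^2}={\cal O}(s^{-m-k})$; the low--frequency cut--off $\tilde\phi(\tilde A)$ is also used here, making $\tilde\phi(\tilde A)\tilde A^{k}$ bounded and, combined with the diffusion estimate, furnishing the extra $t^{-1}$ (the same gain already exploited for $\tilde I$). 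One must also justify the contour surgery and residue computation of the first step uniformly in the spectral variables; this is routine given the elementary resolvent bound $\|(i\lambda+\tilde A)^{-1}\|_{L^2\to L^2}={\cal O}(|\lambda|^{-1})$ on ${\cal C}$, but it needs care near $\lambda=0$, which is precisely where $\tilde\phi$ is needed.
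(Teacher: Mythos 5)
Your reduction of $K_1$ to a double operator integral $\iint g_t(a,a')\,dE(a)\,B^{-1}\,dE(a')$ with $g_t$ the first divided difference of $x^2e^{-xt}$ is a legitimate (if heavier) reformulation, but the binomial expansion of $x_\theta^{k}=((1-\theta)a+\theta a')^{k}$ is where the argument breaks down. It creates cross terms such as $t\int_0^{1/2}\theta\,\tilde Ae^{-(1-\theta)t\tilde A}B^{-1}\tilde Ae^{-\theta t\tilde A}w\,d\theta$ (with $w=B^{-1/2}(Bu_0+u_1)$) in which a power of $\tilde A$ sits against the short-time factor. As you observe, the weight $\theta^{q}$ compensates the $L^2\to L^2$ blow-up $(\theta t)^{-q}$, but then the data has been consumed in $L^2$ and no factor remains to produce the $t^{-m}$ gain, since $\tilde A^{q}e^{-\theta t\tilde A}w$ is not controlled in $L^1$; if instead you use $\|\tilde A^{q}e^{-\theta t\tilde A}\|_{L^1\to L^2}={\cal O}((\theta t)^{-m-q})$, the $\theta$-integral diverges for $m\geq 1$ (e.g.\ $m=d/4$ with $d\geq 4$, precisely the range where Theorem 1.3 is needed). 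Your proposed remedy --- commuting $\tilde A^{q}$ leftward past $B^{-1}$ with ``lower order'' commutators --- is not available under the standing hypotheses: the assumptions $\|AB^{\pm1/2}u\|_H\leq C(\|u\|_H+\|Au\|_H)$ only make $[B^{\pm1/2},A]$ of the \emph{same} order as $A$, so $[B^{-1},\tilde A^{q}]e^{-\theta t\tilde A}w$ is exactly as singular near $\theta=0$ as the term you started from, and $\tilde\phi(\tilde A)$ cannot help because it sits on the wrong side of $B^{-1}$. (For $B=a(x)$, $A=-\Delta$ the commutator does gain an order, but that is extra structure the abstract theorem does not assume.)

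The paper's proof avoids creating these cross terms in the first place. It applies the identity $(i\lambda+\tilde A)^{-1}=(i\lambda+\tilde A)^{-1}e^{-t(i\lambda+\tilde A)/2}+\int_0^{t/2}e^{-t_1(i\lambda+\tilde A)}dt_1$ \emph{only} to the resolvent adjacent to the data, which replaces it by $e^{-t_1\tilde A}$ with no accompanying power of $\tilde A$; then the algebraic identity $\lambda(i\lambda+\tilde A)^{-1}=-i\,Id+i\tilde A(i\lambda+\tilde A)^{-1}$, applied to the remaining left resolvent, converts $\lambda^{2}$ into entire terms (killed by contour deformation, since $t-t_1\geq t/2$) plus $\tilde A^{2}(i\lambda+\tilde A)^{-1}$. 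In this way both powers of $\tilde A$ land next to the long-time semigroup and are absorbed via $\|\tilde\phi(\tilde A)\tilde A^{2}e^{-t\tilde A/4}\|_{L^2\to L^2}={\cal O}(t^{-2})$, while the data only ever meets $e^{-t_1\tilde A}$ and $B^{\pm1/2}$, all bounded on $L^1$; no commutation with $B^{-1}$ is required. To salvage your spectral-side computation you would have to make the same asymmetric choice before expanding, i.e.\ keep the $a'$-dependence confined to $e^{-\theta a't}$ and put all polynomial growth on the $a$ variable; the symmetric binomial expansion is the step to give up.
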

\begin{proof}
We use the following identity
\begin{equation}\label{re}
(i\lambda+\tilde A)^{-1}=(i\lambda+\tilde A)^{-1}e^{-t(i\lambda+\tilde A)/2}+\int^{t/2}_0e^{-t_1(i\lambda+\tilde A)}dt_1.
\end{equation}
Inserting this identity, we obtain 
\begin{align*}
K_1&=\int_{\cal C}e^{i\lambda t/2}B^{-1/2}\tilde \phi(\tilde A)(i\lambda+\tilde A)^{-1}B^{-1}\lambda^2(i\lambda+\tilde A)^{-1} e^{-t\tilde A/2}B^{-1/2}\left(Bu_0+u_1\right)d\lambda\\
&\quad+\int_{{\cal C}}d\lambda \int^{t/2}_0dt_1\ e^{i\lambda (t-t_1)}B^{-1/2}\tilde \phi(\tilde A)\lambda^2(i\lambda+\tilde A)^{-1}B^{-1}e^{-t_1\tilde A}B^{-1/2}\left(Bu_0+u_1\right).
\end{align*}
The first integral is easily treated since we have the $(i\lambda+\tilde A)^{-1}={\cal O}(|\lambda|^{-1})$ on $\cal C$. 
For latter integrals, we apply the following identity 
$$\lambda(i\lambda+\tilde A)^{-1}=-iId+i\tilde A(i\lambda+\tilde A)^{-1}.$$
So we have
\begin{align*}
K_1&=\int_{{\cal C}}e^{i\lambda t/2}{\cal O}(1)e^{-t/2\tilde A}B^{-1/2}\left(Bu_0+u_1\right)d\lambda\\
&\quad -i\int_{{\cal C}}d\lambda \int^{t/2}_0dt_1\ e^{i\lambda (t-t_1)}B^{-1/2}\tilde \phi(\tilde A)\lambda B^{-1}e^{-t_1\tilde A}B^{-1/2}\left(Bu_0+u_1\right)\\
&\quad +\int_{{\cal C}}d\lambda\int^{t/2}_0dt_1\ e^{i\lambda (t-t_1)}B^{-1/2}\tilde \phi(\tilde A)\tilde AB^{-1}e^{-t_1\tilde A}B^{-1/2}\left(Bu_0+u_1\right)\\
&\quad -\int_{{\cal C}}d\lambda\int^{t/2}_0dt_1\ e^{i\lambda (t-t_1)}B^{-1/2}\tilde \phi(\tilde A)\tilde A^2(i\lambda+\tilde A)^{-1}B^{-1}e^{-t_1\tilde A}B^{-1/2}\left(Bu_0+u_1\right).
\end{align*}
In second and third integrals, integrands are holomorphic with respect to $\lambda$ variable, and $t-t_1\geqq t/2$, we can deform the contour across the origin and get exponential decay. For the fourth integral, we can get heat type semi-group modulo exponential error by applying same argument as in the previous section. We obtain
\begin{align*}
K_1&=\int_{{\cal C}}e^{i\lambda t/2}{\cal O}(1)d\lambda e^{-t/2\tilde A}B^{-1/2}\left(Bu_0+u_1\right)\\
&\quad+\int^{t/2}_0 B^{-1/2}\phi(\tilde A)\tilde A^2 e^{-(t-t_1)\tilde A}B^{-1}e^{-t_1\tilde A}B^{-1/2}\left(Bu_0+u_1\right)dt_1+{\cal O}(e^{-\varepsilon t})(\|u_0\|_{L^2}+\|u_1\|_{L^2})\\
&={\tilde I}_1+{\tilde I}_2+{\cal O}(e^{-\varepsilon t})(\|u_0\|_{L^2}+\|u_1\|_{L^2}).
\end{align*}
Applying same contour deformation as in Proposition $\ref{low decay}$, we get
\begin{align*}
\|{\tilde I}_1\|_{L^2}&={\cal O}(t^{-1}\| e^{-t/2\tilde A}B^{-1/2}\left(Bu_0+u_1\right)\|_{L^2})\\
&={{\cal O}}(t^{-m-1})(\|u_0\|_{L^1}+\|u_1\|_{L^1}).
\end{align*}
Since $\int^{t/2}_0dt_1={\cal O}(t)$ and $\|\phi(\tilde A) \tilde A^2 e^{-t/4\tilde A}\|_{L^2\to L^2}={\cal O}(t^{-2})$, the following estimate holds 
\begin{align*}
\|{\tilde I}_2\|_{L^2}&={\cal O}(t\|\phi(\tilde A) \tilde A^2 e^{-t/4\tilde A}\|_{L^2\to L^2}\sup_{0\leq t_1\leq t/2} \| e^{-\tilde A(3t/4-t_1)}B^{-1}e^{-t_1\tilde A}B^{-1/2}\left(Bu_0+u_1\right)\|_{L^2})\\
&={\cal O}(t^{-1}\sup_{0\leq t_1\leq t/2} \| e^{-\tilde A(3t/4-t_1)}\|_{L^1\to L^2}\|B^{-1}e^{-t_1\tilde A}B^{-1/2}\left(Bu_0+u_1\right)\|_{L^1})\\
&={\cal O}(t^{-m-1} (\|u_0\|_{L^1}+\|u_1)\|_{L^1}).
\end{align*}
Thus we have estimated $K_1$ part. The estimate of $\tilde K_1$ is almost similar.
\end{proof}
We can apply similar idea for the estimate of $K_n$ and $\tilde K_n$ terms, as an example, we next prove the following estimate
$$\|K_2\|_{L^2}={\cal O}(t^{-m-2} (\|u_0\|_{L^1\cap L^2}+\|u_1\|_{L^1\cap L^2})).$$
By the identity $(\ref{re})$, we have
\begin{align*}
K_2&=\int_{{\cal C}}e^{i\lambda t/2}B^{-1/2}\tilde \phi(\tilde A)(i\lambda+\tilde A)^{-1}B^{-1}\lambda^2(i\lambda+\tilde A)^{-1}B^{-1}\lambda^2(i\lambda+\tilde A)^{-1}e^{-t/2\tilde A}B^{-1/2}\left(Bu_0+u_1\right)d\lambda\\
&\quad+\int_{{\cal C}}d\lambda\int^{t/2}_0dt_1\ e^{i\lambda (t-t_1)}B^{-1/2}\tilde \phi(\tilde A)(i\lambda+\tilde A)^{-1}B^{-1}\lambda^2(i\lambda+\tilde A)^{-1}B^{-1}\lambda^2e^{-t_1\tilde A}B^{-1/2}\left(Bu_0+u_1\right).
\end{align*}
Applying the following identity to the second integral
\begin{equation}\label{re2}
  (i\lambda+\tilde A)^{-1}=(i\lambda+\tilde A)^{-1}e^{-t/4(i\lambda+\tilde A)}+\int^{t/4}_0e^{-t_2(i\lambda+\tilde A)}dt_2,
\end{equation}
we get
\begin{align*}
K_2&=\int_{{\cal C}}e^{i\lambda t/2}{\cal O}(|\lambda|)d\lambda e^{-t/2\tilde A}B^{-1/2}\left(Bu_0+u_1\right)\\
&\quad+\int_{{\cal C}}d\lambda\int^{t/2}_0dt_1\ e^{i\lambda (3t/4-t_1)}B^{-1/2}\tilde \phi(\tilde A)(i\lambda+\tilde A)^{-1}B^{-1}\lambda^4 (i\lambda+\tilde A)^{-1} e^{-t/4\tilde A}B^{-1}e^{-t_1\tilde A}B^{-1/2}\left(Bu_0+u_1\right)\\
&\quad+\int_{{\cal C}}d\lambda \int^{t/2}_0dt_1 \int^{t/4}_0dt_2 \ e^{i\lambda (t-t_1-t_2)}B^{-1/2}\tilde \phi(\tilde A)\lambda^4(i\lambda+\tilde A)^{-1}B^{-1} e^{-t_2\tilde A}B^{-1}e^{-t_1\tilde A}B^{-1/2}\left(Bu_0+u_1\right).
\end{align*}
We can apply similar argument as in the proof of Proposition $\ref{K1}$ and we can get the estimate. 

In the same way, we obtain
\begin{proposition}
There exists a constant $C>0$ such that the following estimates holds for any $t>1$ and $u_0\in L^1 \cap D(A)$, $u_1\in L^2\cap L^1$ 
\begin{align*}
&\|K_n\|_{L^2}\leq C(t^{-m-n} (\|u_0\|_{L^1\cap L^2}+\|u_1\|_{L^1\cap L^2}),\\
&\|\tilde K_n\|_{L^2}\leq C(t^{-m-n-1} (\|u_0\|_{L^1\cap L^2}+\|u_1\|_{L^1\cap L^2}).
\end{align*}
\end{proposition}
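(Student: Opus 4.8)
The plan is to iterate, $n$ times, the scheme already used for $K_1$ in Proposition $\ref{K1}$ and sketched for $K_2$; equivalently one may run an induction on $n$, the constants below being allowed to depend on $n$. Recall
$$K_n=\int_{{\cal C}}e^{i\lambda t}B^{-1/2}\tilde \phi(\tilde A)(i\lambda+\tilde A)^{-1}B^{-1/2}\bigl(\lambda^2B^{-1/2}(i\lambda+\tilde A)^{-1}B^{-1/2}\bigr)^n\left(Bu_0+u_1\right)d\lambda .$$
First I would peel off $n$ heat semigroups from the $n+1$ resolvent factors $(i\lambda+\tilde A)^{-1}$, by applying the splitting identity $(\ref{re})$ (and its rescaled versions, as in $(\ref{re2})$) successively with the time increments $t/2,\ t/4,\dots,t/2^{n}$. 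Then I would lower the powers of $\lambda$ against the one resolvent that remains, and finally estimate the resulting heat-kernel expression by the diffusion bound $(\ref{diff})$ together with the $L^1$-boundedness $(\ref{bddsemi})$, $(\ref{B})$ of $e^{-s\tilde A}$ and $B^{\pm1/2}$.

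In more detail, at the $j$-th step I apply $(i\lambda+\tilde A)^{-1}=(i\lambda+\tilde A)^{-1}e^{-s_j(i\lambda+\tilde A)}+\int_0^{s_j}e^{-t_j(i\lambda+\tilde A)}\,dt_j$ with $s_j=t/2^{j}$ to the $j$-th resolvent counted from the data side. The boundary term, which keeps the factor $(i\lambda+\tilde A)^{-1}e^{-s_j(i\lambda+\tilde A)}$, carries a spare heat propagator $e^{-ct\tilde A}$ together with at most $|\lambda|^2$ of growth per remaining resolvent, so it is controlled directly on ${\cal C}$ by the ${\cal O}(|\lambda|^{-1})$ bound for $(i\lambda+\tilde A)^{-1}$ and a contour deformation as in Proposition $\ref{low decay}$ — exactly as ${\tilde I}_1$ was handled in the proof of Proposition $\ref{K1}$. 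In the integral term one is left, after the $j$-th step, with one fewer resolvent and an $e^{-t_j\tilde A}$ inserted between factors $B^{\pm1}$; here I would lower the degree in $\lambda$ by repeated use of $\lambda(i\lambda+\tilde A)^{-1}=-i\,Id+i\tilde A(i\lambda+\tilde A)^{-1}$ (the powers of $\lambda$ being scalar can first be moved next to the surviving resolvent). Each such reduction splits the integrand into a part holomorphic in $\lambda$ across the origin — which, thanks to the factor $e^{i\lambda(t-t_1-\cdots)}$ in which $t-t_1-\cdots$ is a fixed positive fraction of $t$, contributes only ${\cal O}(e^{-\varepsilon t})$ after pushing the contour off the origin — and a part still carrying a single resolvent $\tilde A^{k}(i\lambda+\tilde A)^{-1}$, which becomes a spectrally cut-off heat-semigroup term $\phi(\tilde A)\tilde A^{k}e^{-(t-t_1-\cdots)\tilde A}$ modulo an exponentially small error upon closing the contour. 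After all $n$ steps the main term is, up to ${\cal O}(e^{-\varepsilon t})$ and contributions already bounded by ${\cal O}(t^{-m-n})$, an $n$-fold time integral of the form
$$\int_0^{t/2}\!\cdots\!\int_0^{t/2^{n}} B^{-1/2}\phi(\tilde A)\tilde A^{2n}e^{-(t-t_1-\cdots-t_n)\tilde A}B^{-1}e^{-t_n\tilde A}B^{-1}\cdots B^{-1}e^{-t_1\tilde A}B^{-1/2}\left(Bu_0+u_1\right)\,dt_n\cdots dt_1 .$$

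To estimate this last expression, the $n$ integrations produce a factor ${\cal O}(t^{n})$; since $x^{2n}e^{-cx}$ is bounded on $[0,\infty)$ and $t-t_1-\cdots-t_n$ is comparable to $t$, splitting the remaining propagator in two gives $\|\phi(\tilde A)\tilde A^{2n}e^{-(t-t_1-\cdots-t_n)\tilde A/2}\|_{L^2\to L^2}={\cal O}(t^{-2n})$, while $e^{-(t-t_1-\cdots-t_n)\tilde A/2}$ is an ${\cal O}(t^{-m})$ map $L^1\to L^2$ by $(\ref{diff})$; and the inner string $B^{-1}e^{-t_n\tilde A}B^{-1}\cdots B^{-1/2}\left(Bu_0+u_1\right)$ is ${\cal O}(1)$ in $L^1$, uniformly in the $t_j$, by $(\ref{bddsemi})$ and $(\ref{B})$. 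Multiplying the powers, $t^{n}\cdot t^{-2n}\cdot t^{-m}=t^{-m-n}$, which is the claimed bound for $K_n$; the $L^2$ norms in the statement enter only through the boundary and holomorphic-error terms, exactly as for $K_1$. For $\tilde K_n$ the single difference is the extra factor $i\lambda$ in front of $u_0$: absorbing it through one further use of $\lambda(i\lambda+\tilde A)^{-1}=-i\,Id+i\tilde A(i\lambda+\tilde A)^{-1}$ (or one more application of $(\ref{re})$) produces an additional ${\cal O}(t^{-1})$, hence $\|\tilde K_n\|_{L^2}={\cal O}(t^{-m-n-1})(\|u_0\|_{L^1\cap L^2}+\|u_1\|_{L^1\cap L^2})$. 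As in the previous section all the estimates are uniform with respect to the cut-off sequence used to regularize the data, so the bounds pass from the dense class of regular data to general data. I expect the only genuine obstacle to be the bookkeeping of this iteration: one has to check that at every stage the total degree in $\lambda$ never exceeds the number of resolvent factors still present, so that the contour deformation of Proposition $\ref{low decay}$ stays applicable, and this forces the order in which the two identities must be applied — split off a resolvent, then immediately lower the $\lambda$-degree before the next split.
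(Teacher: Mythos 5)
Your proposal is correct and takes essentially the same route as the paper, which itself only treats $n=1$ in full and $n=2$ in outline before asserting the general case ``in the same way'': iterated application of the splitting identities $(\ref{re})$, $(\ref{re2})$ with dyadic time increments, lowering of the $\lambda$-powers against the one surviving resolvent via $\lambda(i\lambda+\tilde A)^{-1}=-i\,Id+i\tilde A(i\lambda+\tilde A)^{-1}$, contour deformation for the holomorphic pieces, and the diffusion and $L^1$ bounds for the resulting $n$-fold heat integral. Your power count $t^{n}\cdot t^{-2n}\cdot t^{-m}=t^{-m-n}$ (with one extra $t^{-1}$ for $\tilde K_n$) reproduces exactly the paper's count in the $n=1$ case.
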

By these propositions, we have estimated $\tilde \phi_B(A)u(t)$ part and we have
\begin{equation}
\tilde \phi_B(A)u(t)=B^{-1/2}e^{-t\tilde A}B^{-1/2} (Bu_0+u_1)+{\cal O}(t^{-m-1}\|u_0\|_{L^1}+\|u_1\|_{L^1}+\|f\|_{\cal H}).
\end{equation}
Next we estimate  $(1-\tilde \phi_B(A))u(t)=\phi_B(A)u(t)$ term by sharpening the estimate of Proposition $\ref{high}$. In the proof of Proposition $\ref{high}$, we have the decomposition
$$\phi_B(A)\psi(t)U(t)f=\frac{1}{2\pi}J_2f+{\cal O}(e^{-\varepsilon t}\|f\|_{\cal H}).$$
Here $\psi (t)=1$ for $t>2$ and 
\begin{equation*}
J_2f=\int_{\cal C}d\lambda \int^\8_{-\8} ds\  B^{-1/2}\lambda^2\phi(\tilde A)(i\lambda+\tilde A)^{-1}e^{i\lambda (t-s)} B^{-1/2}\psi'(s)U(s)(i\lambda-{\cal A})^{-1}{f}.
\end{equation*}
Since $\cal A$ is a generator of $U(t)$, we exchange $U(t)$ to $(i\lambda- {\cal A})^{-1}$. We apply similar argument to Proposition $\ref{K1}$. By $(\ref{asym resolvent})$ and $(\ref{reso})$, we have
\begin{equation}
\begin{split}
J_2f=\int_{\cal C}d\lambda \int^\8_{-\8}\psi'(s)ds\  B^{-1/2}\lambda^2\phi(\tilde A)(i\lambda+\tilde A)^{-1}e^{i\lambda (t-s)} B^{-1/2}U(s)B^{-1/2}(i\lambda +\tilde A)^{-1}B^{-1/2}M(\lambda) {f}+R_2.
\end{split}
\end{equation}
Here $R_2$ is the reminder term. We recall the support of $\psi'$ is in $[1,2]$. We apply identity $(\ref{re})$ and $J_2f$ becomes
\begin{equation}
\begin{split}
&\int_{\cal C}d\lambda \int^2_{1}\psi'(s)ds\  B^{-1/2}\lambda^2\phi(\tilde A)(i\lambda+\tilde A)^{-1}e^{i\lambda (t/2-s)} B^{-1/2}U(s)B^{-1/2}(i\lambda+\tilde A)^{-1}e^{-t\tilde A/2}B^{-1/2}M(\lambda) {f}\\
&+\int_{\cal C}d\lambda  \int^2_{1}\psi'(s)ds\int^{t/2}_0dt_1\  B^{-1/2}\lambda^2\phi(\tilde A)(i\lambda+\tilde A)^{-1}e^{i\lambda (t-t_1-s)} B^{-1/2}U(s)B^{-1/2}e^{-t_1\tilde A/2}B^{-1/2}M(\lambda) {f}+R_2.
\end{split}
\end{equation}
For the first integral, on the contour, we have
$$\|(i\lambda +\tilde A)^{-1}\|_{L^2 \to H^1}={\cal O}(|\lambda|^{-1}),$$
and 
$$\|e^{-t{\tilde A}}B^{-1/2}A\|_{L^1\to L^2}=\|\tilde A e^{-t{\tilde A}}B^{1/2}\|_{L^1\to L^2}={\cal O}(t^{-m-1}). $$
From these estimate, we can estimate the first integral by similar contour deformation as in Proposition $\ref{low decay}$ and get $t^{-m-2}$ order decay. 
In the second integral, we need the following estimate
$$\|B^{-1/2}e^{-t_1 \tilde A/2}B^{-1/2}M(\lambda) {f}\|_{\cal H}={\cal O}(t_1^{-1/2}+1)\|f\|_{\cal H}.$$
This estimate follows from
$$\|B^{-1/2}e^{-t_1\tilde A/2} {u}\|_{H_1}={\cal O}(t_1^{-1/2}+1)\|u\|_{H}$$
and 
$$\|B^{-1/2}e^{-t_1\tilde A/2}B^{-1/2}A {u}\|_{H}={\cal O}(t_1^{-1/2}+1)(\|u\|_{H_1}).$$
In the above, the first estimate holds by interpolating the following estimate and boundedness on $H$
$$\|A B^{-1/2}e^{-t_1\tilde A/2}B^{-1/2}u\|_H=\|B^{1/2} \tilde A e^{-t_1 \tilde A/2}B^{-1/2}u\|_H={\cal O}(t_1^{-1})\|u\|_{H}$$
The second one is obtained from the first estimate by taking its adjoint
\begin{align*}
\|B^{-1/2}e^{-t_1 \tilde A/2}B^{-1/2}Au\|_{H}&=\|B^{-1/2}e^{-t_1 \tilde A/2}B^{-1/2}A^{1/2}A^{1/2} u\|_{H}\\
&\leq \|(B^{-1/2}e^{-t_1 \tilde A/2}B^{-1/2}A^{1/2})\|_{H\to H} \|A^{1/2} {u}\|_{H}\\
&=\|(B^{-1/2}e^{-t_1 \tilde A/2}B^{-1/2}A^{1/2})^*\|_{H\to H} \|A^{1/2} {u}\|_{H}\\
&=\|A^{1/2}B^{-1/2}e^{-t_1/2 \tilde A}B^{-1/2} \|_{H\to H}\|{{\sqrt A}u}\|_{H}\\
&={\cal O}(t_1^{-1/2}+1)\|u\|_{H_1}.
\end{align*}
In the second integral, since the integrand is holomorphic and $t_1^{-1/2}$ is integrable near $t_1=0$, we obtain exponential decay by contour deformation. Using the above estimates, we can repeat similar argument for reminder term $R_2$. In this case, one may think the $L^1L^1$ estimate of  $e^{-t_1\tilde A/2}\tilde A$ is needed but since $e^{-t\tilde A/2}\tilde A=-\pl_t e^{-t\tilde A/2}$ and using  integration by parts, we can avoid this term. Since the argument is essentially similar, we omit the detail and we get
\begin{proposition}\label{low exp}
There exists a constant $C>0$ such that the following estimates holds for any $t>1$ and $f\in {\cal H}$ 
$$\|(1-\tilde \phi_B(A))u(t)\|_H\leq C t^{-m-2}\|f\|_{\cal H}.$$
\end{proposition}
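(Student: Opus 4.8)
The plan is to sharpen the argument behind Proposition~\ref{high}. There we already reduced the high-frequency part to the low-frequency contour integral: $\phi_B(A)U(t)f=\frac{1}{2\pi}J_2f+{\cal O}(e^{-\varepsilon t}\|f\|_{\cal H})$ for $t$ large, with $J_2f$ the integral over the small contour ${\cal C}$ of $B^{-1/2}\lambda^2\phi(\tilde A)(i\lambda+\tilde A)^{-1}e^{i\lambda(t-s)}B^{-1/2}\psi'(s)U(s)(i\lambda-{\cal A})^{-1}f$. The first step is to exploit that ${\cal A}$ generates $U$ in order to replace $U(s)(i\lambda-{\cal A})^{-1}$ by a resolvent acting to the right, writing $(i\lambda-{\cal A})^{-1}=R(i\lambda)M(i\lambda)$ as in~(\ref{reso}) and then inserting the asymptotic resolvent identity~(\ref{asym resolvent}) together with $(Bi\lambda+A)^{-1}=B^{-1/2}(i\lambda+\tilde A)^{-1}B^{-1/2}$. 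This recasts $J_2f$ as a leading piece carrying the factor $B^{-1/2}(i\lambda+\tilde A)^{-1}B^{-1/2}M(i\lambda)f$ plus a remainder $R_2$ in which one extra block $\lambda^2(i\lambda+\tilde A)^{-1}$ --- hence an extra factor $|\lambda|$ on ${\cal C}$, i.e. an extra $t^{-1}$ after deformation --- has been peeled off.

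Next I would insert the identity~(\ref{re}), $(i\lambda+\tilde A)^{-1}=(i\lambda+\tilde A)^{-1}e^{-t(i\lambda+\tilde A)/2}+\int_0^{t/2}e^{-t_1(i\lambda+\tilde A)}\,dt_1$, into the leading piece. In the first summand the genuine resolvent $(i\lambda+\tilde A)^{-1}={\cal O}(|\lambda|^{-1})$ survives on ${\cal C}$, so the same contour deformation as in Proposition~\ref{low decay} supplies an extra $t^{-1}$, while the accompanying semigroup factor gives $\|e^{-t\tilde A/2}B^{-1/2}A\|_{L^1\to L^2}=\|\tilde Ae^{-t\tilde A/2}B^{1/2}\|_{L^1\to L^2}={\cal O}(t^{-m-1})$; multiplying these produces exactly the claimed $t^{-m-2}$. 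In the second summand the resolvent has disappeared, so the $\lambda$-integrand is holomorphic near the origin; since $t-t_1-s\ge t/2$ for $t$ large, $t_1\le t/2$ and $s\in[1,2]$, pushing ${\cal C}$ across the origin turns $e^{i\lambda(t-t_1-s)}$ into exponential decay, provided the $t_1$-integral converges at $t_1=0$. This convergence is where the hypotheses enter, via $\|B^{-1/2}e^{-t_1\tilde A/2}B^{-1/2}M(i\lambda)f\|_{\cal H}={\cal O}(t_1^{-1/2}+1)\|f\|_{\cal H}$, which I would establish by interpolating $\|AB^{-1/2}e^{-t_1\tilde A/2}B^{-1/2}\|_{H\to H}=\|B^{1/2}\tilde Ae^{-t_1\tilde A/2}B^{-1/2}\|_{H\to H}={\cal O}(t_1^{-1})$ (analyticity of $e^{-t\tilde A}$ plus continuity of $B^{\pm1/2}$ on $D(A)$) against plain $L^2$-boundedness, together with the companion bound $\|B^{-1/2}e^{-t_1\tilde A/2}B^{-1/2}A u\|_H={\cal O}(t_1^{-1/2}+1)\|u\|_{H_1}$ obtained by taking adjoints.

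Finally the remainder term $R_2$ is treated by iterating the same scheme. The one genuine subtlety is that a naive iteration would require an $L^1\to L^1$ bound on $e^{-t_1\tilde A/2}\tilde A$, which is not among the assumptions; instead I would use $e^{-t_1\tilde A/2}\tilde A=-2\partial_{t_1}e^{-t_1\tilde A/2}$ and integrate by parts in $t_1$, moving the derivative onto the smooth compactly supported cut-off data and onto exponentially small boundary terms, so the missing estimate is never needed. Collecting the $t^{-m-2}$ contribution with the exponentially small remainders, and running the usual density argument from sufficiently regular $f$ to general $f\in{\cal H}$, gives $\|(1-\tilde\phi_B(A))u(t)\|_H\le Ct^{-m-2}\|f\|_{\cal H}$. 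I expect the main obstacle to be not the contour deformations --- which are routine copies of Proposition~\ref{low decay} --- but the bookkeeping of operator norms across $L^1$, $L^2$, $H$, $H_1$ and ${\cal H}$, in particular verifying the mixed-norm semigroup estimates above uniformly for $\lambda\in{\cal C}$, and organising the integration by parts in $R_2$ so that no $L^1\to L^1$ smoothing of $e^{-t_1\tilde A/2}\tilde A$ is ever invoked.
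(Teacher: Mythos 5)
Your proposal is correct and follows essentially the same route as the paper: the same reduction of $\phi_B(A)U(t)f$ to $J_2f$, the same insertion of (\ref{reso}), (\ref{asym resolvent}) and (\ref{re}), the same pair of mixed-norm semigroup estimates (including the interpolation/adjoint derivation of the $t_1^{-1/2}$ bound), and even the same integration-by-parts device for avoiding the $L^1\to L^1$ bound on $\tilde A e^{-t_1\tilde A/2}$ in the remainder $R_2$.
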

From these propositions, we have proved Theorem {\ref{main 2}}.
\end{section} 

\begin{section}{Applications}
As applications of our approach, here we consider some examples of damped wave equation on $\R^d$. We study the following divergence form damped wave equation
\begin{equation}\label{euc damp}
\left(\pl_{t}^2-\sum_{i,j=1}^d\pl_ig_{ij}(x)\pl_j+a(x)\pl_t\right)u(x,t)=(\pl_{t}^2+P+a(x)\pl_t)u(x,t)=0.
\end{equation}
Here we write $\pl_i=\frac{\pl}{\pl x_i}$ and $P= -\sum_{i,j=1}^d\pl_ig_{ij}(x)\pl_j$. 
We assume $g_{ij}$ are smooth functions with bounded derivatives and $(g_{ij}(x))$ is a family of uniformly elliptic real symmetric matrices, i.e. there exists a constant $C>0$ such that 
$$\frac{1}{C} Id \leq (g_{ij}(x)) \leq C Id$$
holds for any $x\in \R^d$. 
Under this assumption $P$ is a positive definite self-adjoint operator on $H^2$ and $D(\sqrt P)=H^1$ where $H^1$ and $H^2$ are usual sobolev spaces on $\R^d$. 
The damping term $a(x)$ is a smooth non-negative function with bounded derivatives. If $a(x)>c$ for a constant $c>0$,  by Lemma $\ref{heat est} $ below, we can apply Theorem ${\ref{main}}$ and Theorem ${\ref{main 2}}$. Thus we obtain similar decay estimates for the constant coefficient case. 
\begin{theorem}
Let $u$ be a solution of $(\ref{euc damp})$ with the initial data $u|_{t=0}=u_0$ and $\pl_tu|_{t=0}=u_1$. We assume $a(x)>c$ for $c>0$ and define
$$v(t)=a(\cdot)^{-1/2}e^{-ta^{-1/2}Pa^{-1/2}}a(\cdot)^{1/2}u_0-a(\cdot)^{-1/2}e^{-ta^{-1/2}Pa^{-1/2}}a(\cdot)^{-1/2}u_1.$$
Then there exists $C>0$ such that we have the following asymptotic profile
\begin{align*}
\|u(t)-v(t)\|_{L^2}\leq C{t^{-1}}(\|u_0\|_{H^1}+\|u_1\|_{L^2} ).
\end{align*}
for $u_0\in H^1$, $u_1\in L^2$ and $t>1$. Furthermore, if we assume $u_0\in H^1\cap L^1$ and $u_1\in L^2\cap L^1$, then we also have the following bound for $t>1$, 
\begin{align*}
\|u(t)-v(t)\|_{L^2}\leq C{t^{-d/4-1}}(\|u_0\|_{L^2 \cap L^1}+\|u_1\|_{L^2 \cap L^1} ).
\end{align*}

\end{theorem}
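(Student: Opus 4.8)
The strategy is to realise $(\ref{euc damp})$ as an instance of the abstract equation $(\ref{abstract wave})$ and then invoke Theorem~$\ref{main}$ and Theorem~$\ref{main 2}$. I take $H=L^2(\R^d)$, let $A=P=-\sum_{i,j}\pl_i g_{ij}(x)\pl_j$, and let $B$ be multiplication by $a(x)$. By uniform ellipticity and elliptic regularity, $P$ is a densely defined positive self-adjoint operator with $D(P)=H^2$ and $D(\sqrt P)=H^1$, the latter with equivalent norms; and $B$ is bounded with $B\ge c\,Id$ since $0<c\le a(x)\le C$. The only structural hypothesis requiring work is the continuity of $B^{1/2}=a^{1/2}$ and $B^{-1/2}=a^{-1/2}$ on $D(A)$. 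I would write $P(a^{\pm1/2}u)=a^{\pm1/2}Pu+[P,a^{\pm1/2}]u$; since $a$ is smooth, bounded, and bounded away from zero, $a^{\pm1/2}$ has bounded derivatives of every order, so $[P,a^{\pm1/2}]$ is a first-order operator with bounded coefficients and $\|[P,a^{\pm1/2}]u\|_{L^2}\le C\|u\|_{H^1}$; then $\|u\|_{H^1}^2\le C\|u\|_{L^2}\|u\|_{H^2}\le C\|u\|_{L^2}(\|u\|_{L^2}+\|Pu\|_{L^2})$ by interpolation together with $D(P)=H^2$. This gives the two bounds required in Theorem~$\ref{main}$, whose conclusion then applies with $\tilde A=a^{-1/2}Pa^{-1/2}$; since $\|u_0\|_{L^2}+\|\sqrt P u_0\|_{L^2}\le C\|u_0\|_{H^1}$ and $v(t)$ is the asymptotic profile appearing there, the first estimate follows.

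For the second estimate I would verify the additional hypotheses $(\ref{bddsemi})$, $(\ref{diff})$, $(\ref{B})$ of Theorem~$\ref{main 2}$, with $L^2(\Omega,\mu)=L^2(\R^d,dx)$. Estimate $(\ref{B})$ is immediate from $c\le a\le C$. For $(\ref{bddsemi})$ and $(\ref{diff})$ the point is to identify $e^{-t\tilde A}$ with a weighted divergence-form heat semigroup: from $a^{-1/2}\tilde A a^{1/2}=a^{-1}P$ one gets $e^{-t\tilde A}=a^{1/2}e^{-ta^{-1}P}a^{-1/2}$, and $a^{-1}P=a(x)^{-1}\bigl(-\sum_{i,j}\pl_i g_{ij}(x)\pl_j\bigr)$ is minus the generator of the heat semigroup of the uniformly elliptic real divergence-form operator $-\sum\pl_i g_{ij}(x)\pl_j$ taken with respect to the measure $a(x)\,dx$, which is comparable to Lebesgue measure. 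By the Nash--Aronson Gaussian bounds for such operators --- this is exactly the cited Lemma~$\ref{heat est}$ --- the associated heat kernel satisfies $p(t,x,y)\le Ct^{-d/2}e^{-c|x-y|^2/t}$, whence $\|e^{-ta^{-1}P}\|_{L^1\to L^1}={\cal O}(1)$ and, interpolating with $\|e^{-ta^{-1}P}\|_{L^1\to L^\8}={\cal O}(t^{-d/2})$, also $\|e^{-ta^{-1}P}\|_{L^1\to L^2}={\cal O}(t^{-d/4})$. Since $L^p(a\,dx)$ and $L^p(dx)$ coincide with equivalent norms and $a^{\pm1/2}$ is bounded on every $L^p$, the same bounds hold for $e^{-t\tilde A}$, so $(\ref{bddsemi})$ holds and $(\ref{diff})$ holds with $m=d/4$.

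Theorem~$\ref{main 2}$ then yields $\|u(t)-v(t)\|_{L^2}\le Ct^{-d/4-1}(\|u_0\|_{L^2\cap L^1}+\|\sqrt P u_0\|_{L^2}+\|u_1\|_{L^2\cap L^1})$; to obtain the stated form I would discard the $\|\sqrt P u_0\|_{L^2}\sim\|u_0\|_{H^1}$ contribution using Remark~$\ref{improve}$, according to which that term in fact decays faster than any power of $t$ and so does not affect the rate $t^{-d/4-1}$.

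The main obstacle is the identification in the second paragraph: one must see $\tilde A$ as unitarily equivalent, via the weight $a^{1/2}$, to a bona fide divergence-form heat generator on a measure space comparable to $(\R^d,dx)$, so that the classical Gaussian heat-kernel estimates of Lemma~$\ref{heat est}$ apply and produce precisely $m=d/4$; everything else is routine commutator, interpolation, and norm-equivalence bookkeeping.
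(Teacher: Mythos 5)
Your proposal is correct and follows essentially the same route as the paper, which proves this theorem only by remarking that, thanks to Lemma \ref{heat est}, Theorems \ref{main} and \ref{main 2} apply with $A=P$ and $B=a(\cdot)$; you supply exactly the verifications the paper leaves implicit (the commutator and elliptic-regularity bound showing $a^{\pm 1/2}$ is continuous on $D(P)$, and the identification of $e^{-t\tilde A}$ with a Nash-type heat semigroup yielding $(\ref{bddsemi})$, $(\ref{diff})$ with $m=d/4$, and $(\ref{B})$). Your appeal to Remark \ref{improve} to discard the $\|\sqrt P u_0\|_{L^2}$ term in the second estimate is the right reading, though strictly the constant then still carries a rapidly decaying $\|u_0\|_{H^1}$ contribution --- an imprecision already present in the paper's own statement.
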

We shall study the case that  $a(x)$ may be vanish on a compact set however the geometric control condition holds. So we recall about the geometric control condition. For the operator $P$, we define its Hamiltonian flow $\phi_t(x,\xi)=(x(t),\xi(t))$ by the solution of the following the Hamiltonian equation
\begin{equation*}
\begin{cases}
&\displaystyle \frac{d x}{dt}=\frac{\pl p}{\pl \xi},\ \ \displaystyle \frac{d \xi}{dt}=-\frac{\pl p}{\pl x}\\
&(x(0),\xi(0))=(x,\xi).
\end{cases}
\end{equation*} 
Here the symbol $p(x,\xi)=\sum_{i,j=1}^dg_{ij}(x)\xi_i\xi_j$ is associated with $P$.  
By the uniform ellipticity, the above Hamilton equation has time global solutions for any initial date and we can define the Hamiltonian flow.

For the damped wave equation, it is known that the mean value of the damping term along a geodesic is closely related to the asymptotic profile of the solution c.f. \cite{Jo}, \cite{Ra-Ta}. So we introduce
$$\langle a\rangle_T(x,\xi)=\frac{1}{T}\int^T_0a(\phi_t(x,\xi))dt$$
where $T>0$ and  we use the notion $a(x,\xi):=a(x)$. 
We study the asymptotic behavior of the damped wave equation under the following geometric control condition. 
\medskip

\noindent 
{\bf Geometric control condition}: There exist $T>0$ and $\alpha>0$ such that for all $(x,\xi)\in p^{-1}(1)$, we have $\langle a \rangle_T(x,\xi)\geq \alpha$.
\medskip

Under the geometric control condition, we can expect the asymptotic profile is similar to the strictly positive case. We first obtain the energy decay. Let $U(t)$ be the propagator of the damped wave equation. We have the following energy decay estimate.
\begin{theorem}\label{geo ene decay}
We assume the geometric control condition and $a(x)>c$ for $|x|>R$ where $c>0$ and $R>0$ are some constants. Let $u$ be a solution of $(\ref{euc damp})$ with the initial data $u|_{t=0}=u_0\in H^1$ and $\pl_tu|_{t=0}=u_1\in L^2$. 
Then there exists a constant $C$ such that for $t>1$ the following bound holds
$$\|\nabla u\|^2_{L^2}+\|\pl_t u\|^2_{L^2}\leq C(\sum_{i,j=1}^d(g_{ij}\pl_iu,\pl_ju)_{L^2}+\|\pl_t u\|^2_{L^2}) \leq C t^{-1}(\|u_0\|^2_{H^1}+\|u_1)\|_{L^2}).$$
\end{theorem}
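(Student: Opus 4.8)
The plan is to read $(\ref{euc damp})$ as a special case of the abstract damped wave equation with $H=L^2(\R^d)$, $A=P=-\sum_{i,j}\pl_ig_{ij}(x)\pl_j$ and $B=a(\cdot)$ viewed as a (bounded, non-negative) multiplication operator, and then to rerun the proof of the abstract energy decay theorem of Section 4. Since $a$ may vanish, $B$ is no longer bounded below and $B^{-1/2}$ is not defined; this is not an obstruction, because — unlike Theorems $\ref{main}$ and $\ref{main 2}$ — that proof never uses $\tilde A$ or $B^{-1/2}$. By Remark $\ref{ene rem}$ it rests only on three facts about $R(\lambda)=(\lambda^2+B\lambda+A)^{-1}$: $(i)$ existence for $\mathrm{Im}\,\lambda\not=0$, $\mathrm{Re}\,\lambda>-c$; $(ii)$ the bound $\|R(\lambda)\|_{L^2\to L^2}={\cal O}(|\mathrm{Im}\,\lambda|^{-1})$ there for $|\lambda|$ large; and $(iii)$ the bound $\|R(\lambda)\|_{L^2\to L^2}={\cal O}(|\lambda|^{-1})$ for $\mathrm{Re}\,\lambda>0$, $\mathrm{Im}\,\lambda\sim|\lambda|$ near the origin. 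Fact $(iii)$, the region $\mathrm{Re}\,\lambda>|\mathrm{Im}\,\lambda|$ of fact $(i)$, and the (degenerate) $\mathrm{Re}\,\lambda>0$ part near the origin all follow exactly as in Lemmas $\ref{lem3.1}$ and $\ref{lem3.2}$, whose proofs use only $B\geq0$ and $A>0$. The entire burden therefore falls on $(ii)$ and on extending $(i)$ to a full strip $\mathrm{Re}\,\lambda>-\delta$ for $|\mathrm{Im}\,\lambda|$ bounded below, and this is exactly where the geometric control condition must enter.

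Before the quantitative bound I would show that $\mathcal A$ has no spectrum on $i\R\setminus\{0\}$: if $(\lambda^2+a\lambda+P)u=0$ with $\lambda=i\tau$, $\tau\not=0$, then the imaginary part of $((\lambda^2+a\lambda+P)u,u)_{L^2}=0$ reads $\tau\int a|u|^2\,dx=0$, so $u$ vanishes on $\{a>0\}\supset\{|x|>R\}$; as $u$ then solves $(P-\tau^2)u=0$ and vanishes on a non-empty open set, unique continuation for second-order elliptic operators forces $u\equiv0$. Combined with a limiting absorption principle for $P$ at spatial infinity — available since $(g_{ij})$ is an asymptotically flat perturbation of the Laplacian and, in any case, the exterior region carries the strong damping $a>c$ — the analytic Fredholm theorem gives that $R(\lambda)$ continues holomorphically across each point of $i\R\setminus\{0\}$, and across a uniform strip $\mathrm{Re}\,\lambda>-\delta$ once $|\mathrm{Im}\,\lambda|$ is large.

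The quantitative estimate $(ii)$ for large frequencies is the crux, and I expect it to be the main obstacle. After the semiclassical rescaling $h=|\mathrm{Im}\,\lambda|^{-1}$ it is equivalent to the uniform bound $\|u\|_{L^2}\leq C|\mathrm{Im}\,\lambda|^{-1}\|(\lambda^2+a\lambda+P)u\|_{L^2}$, which I would prove by contradiction. Along a hypothetical sequence $\lambda_n$ with $|\mathrm{Im}\,\lambda_n|\to\infty$, $\mathrm{Re}\,\lambda_n\to0$, and $u_n$ with $\|u_n\|_{L^2}=1$ and $\|(\lambda_n^2+a\lambda_n+P)u_n\|_{L^2}=o(|\mathrm{Im}\,\lambda_n|)$, the same energy identity gives $\int a|u_n|^2\,dx\to0$, so the mass of $u_n$ escapes $\{a>0\}\supset\{|x|>R\}$; together with the ellipticity of $P$ and the fact that strong exterior damping prevents escape of mass to spatial infinity, $\{u_n\}$ has a non-zero semiclassical defect measure $\mu$ supported in $\{|x|\leq R\}$. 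Propagation of singularities shows $\mu$ is a positive measure on the energy surface $p^{-1}(1)$, invariant under the Hamiltonian flow $\phi_t$, with $\int a\,d\mu=0$, hence $\supp\mu\subset\{a=0\}$; invariance then forces $a(\phi_t(x,\xi))=0$ for all $t$ at every $(x,\xi)\in\supp\mu$, so $\langle a\rangle_T(x,\xi)=0$ there, contradicting the geometric control condition. Thus $\mu=0$ and $\|u_n\|_{L^2}\to0$, a contradiction. (Alternatively one may invoke the resolvent bounds of \cite{Bu-Hi}; compare \cite{Jo}, \cite{Ra-Ta}.) The subtle point throughout is the non-compactness of $\R^d$: it is precisely the hypothesis $a>c$ for $|x|>R$ that both makes the defect-measure extraction legitimate and supplies the exterior resolvent control absent from the pure GCC setting on compact manifolds.

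With $(i)$, $(ii)$, $(iii)$ secured, Lemmas $\ref{lem3.3}$--$\ref{lem3.5}$, Proposition $\ref{resolvent estimate}$ and the Morawetz-type contour-deformation argument of Section 4 (including Proposition $\ref{low decay}$) apply verbatim, since they use only the existence of $R(\lambda)$ and these bounds; this produces $C>0$ with $\|U(t){}^t(u_0,u_1)\|_{{\cal H}_0}\leq Ct^{-1/2}\|{}^t(u_0,u_1)\|_{\cal H}$ for $t>1$. Squaring, and using $\|U(t){}^t(u_0,u_1)\|_{{\cal H}_0}^2=\sum_{i,j}(g_{ij}\pl_iu,\pl_ju)_{L^2}+\|\pl_tu\|_{L^2}^2$, the uniform ellipticity bound $\|\nabla u\|_{L^2}^2\leq C\sum_{i,j}(g_{ij}\pl_iu,\pl_ju)_{L^2}$, and $\|{}^t(u_0,u_1)\|_{\cal H}^2=\|u_0\|_{L^2}^2+\sum_{i,j}(g_{ij}\pl_iu_0,\pl_ju_0)_{L^2}+\|u_1\|_{L^2}^2\leq C(\|u_0\|_{H^1}^2+\|u_1\|_{L^2}^2)$, yields the two displayed inequalities of the theorem.
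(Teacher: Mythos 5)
Your overall architecture is the paper's: reduce everything to the three resolvent facts of Remark \ref{ene rem} and handle three frequency regimes. Your high-frequency defect-measure argument is the standard proof of the bound the paper states as Lemma \ref{highreso} (and cites as well known), and your middle-frequency argument (no spectrum on $i\R\setminus\{0\}$ via the imaginary part of the quadratic form plus unique continuation, then Fredholm/continuity) matches the paper's. However, there is a genuine gap at low frequency. You assert that fact $(iii)$ and the near-origin bounds ``follow exactly as in Lemmas \ref{lem3.1} and \ref{lem3.2}, whose proofs use only $B\geq0$ and $A>0$.'' That is not true: both proofs use $B\geq c\,Id$ essentially. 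Taking the imaginary part of $(\ref{eq1})$ gives $|{\rm Im}\lambda|\bigl(2{\rm Re}\lambda\,\|u\|^2+(au,u)\bigr)$, which does not control $\|u\|^2$ near the imaginary axis when $a$ vanishes somewhere; taking the real part gives only $(|{\rm Re}\lambda|^2-|{\rm Im}\lambda|^2)\|u\|^2+(Au,u)$, i.e.\ at best $\|R(\lambda)\|={\cal O}(|\lambda|^{-2})$ in a cone about the positive real axis. An ${\cal O}(|\lambda|^{-2})$ bound near the origin is fatal: in the contour integral over ${\cal C}_o$ (length ${\cal O}(1/t)$) it yields $\|(i\lambda-{\cal A})^{-1}\|_{{\cal H}\to{\cal H}_0}={\cal O}(t)$ instead of ${\cal O}(t^{1/2})$, and the $t^{-1/2}$ decay is lost entirely. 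So the low-frequency regime is not ``free''; it is precisely where the hypothesis $a>c$ for $|x|>R$ must be used quantitatively.

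The paper closes this gap with a dedicated lemma: for small $\lambda$ it proves the weighted Poincar\'e-type inequality
\begin{equation*}
\int_{\R^d}|\nabla u|^2\,dx+\lambda\int_{\R^d}a(x)|u(x)|^2\,dx\geq C\lambda\int_{\R^d}|u|^2\,dx ,
\end{equation*}
by cutting off at scale $r=\varepsilon\sqrt{\lambda}$, applying the Poincar\'e inequality on the ball of radius $1/r$, and absorbing the commutator and exterior terms using $a>c$ for $|x|>R$; combined with the real/imaginary-part identities this restores $\|R(\lambda)\|={\cal O}(|\lambda|^{-1})$ near the origin in the relevant sectors. You need this (or an equivalent) step. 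A secondary point: your appeal to a ``limiting absorption principle for $P$'' under an asymptotic-flatness assumption is not available from the paper's hypotheses (only uniform ellipticity and bounded derivatives of $g_{ij}$ are assumed); the paper instead compares with $c(x)=a(x)+b(x)>c$, for which the uniformly damped theory applies, and uses Rellich compactness of $b(\cdot)(P+\lambda^2+\lambda c(\cdot))^{-1}$ — a route that avoids any decay assumption on the metric.
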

In this case, the damping term is not uniformly positive. But we can prove the similar resolvent estimate and we can apply the argument of proving Theorem 4.1.  We study the resolvent by dividing three range of frequencies.

For low frequency, we use the following estimate
\begin{lemma}
If $\lambda$ and $\delta>0$ are sufficiently small, then $(P+\lambda^2+\lambda a(\cdot))^{-1}:L^2\to L^2$ exists for ${\rm Re} \lambda>0$ or $|{\rm Im}\lambda|>\delta|{\rm Re}\lambda|$ and we have the following bound
$$(P+\lambda^2+\lambda a(\cdot))^{-1}={\cal O}(|\lambda|^{-1}).$$
\end{lemma}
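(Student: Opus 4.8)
The plan is to reduce the statement to the a priori estimate
$$\|u\|_{L^2}\le C\,|\lambda|^{-1}\,\|(P+\lambda^2+\lambda a)u\|_{L^2},\qquad u\in H^2,$$
valid with a fixed $C>0$ for all $\lambda$ in the stated region with $0<|\lambda|<\varepsilon_0$. Granting this, $P+\lambda^2+\lambda a$ is injective with closed range; since the region is invariant under $\lambda\mapsto\overline\lambda$ and $(P+\lambda^2+\lambda a)^*=P+\overline\lambda^{\,2}+\overline\lambda a$, the same estimate applied to the adjoint shows the range is also dense, so $(P+\lambda^2+\lambda a)^{-1}:L^2\to L^2$ exists and is bounded by $C|\lambda|^{-1}$, which is the assertion (elliptic regularity then puts the image in $H^2$).

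First I would prove the a priori estimate by contradiction. If it fails, there are $\lambda_n$ in the region with $|\lambda_n|\to0$ and $u_n\in H^2$ with $\|u_n\|_{L^2}=1$ and $f_n:=(P+\lambda_n^2+\lambda_n a)u_n$ satisfying $\|f_n\|_{L^2}=o(|\lambda_n|)$. Pairing the equation with $u_n$ and taking real and imaginary parts gives
$$(Pu_n,u_n)+\big(({\rm Re}\lambda_n)^2-({\rm Im}\lambda_n)^2\big)+{\rm Re}\lambda_n\,(au_n,u_n)={\rm Re}(f_n,u_n),$$
$$ {\rm Im}\lambda_n\,\big(2\,{\rm Re}\lambda_n+(au_n,u_n)\big)={\rm Im}(f_n,u_n).$$
Fix a small $\eta>0$. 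If $|{\rm Im}\lambda_n|<\eta|\lambda_n|$, then (for $\eta$ small, using $\delta$) membership in the region forces ${\rm Re}\lambda_n>0$ and ${\rm Re}\lambda_n\ge\tfrac12|\lambda_n|$, and since $({\rm Im}\lambda_n)^2+\|f_n\|=o(|\lambda_n|)$ the first identity — whose left side is then a sum of nonnegative terms minus $({\rm Im}\lambda_n)^2$ — yields $(Pu_n,u_n)\to0$ and ${\rm Re}\lambda_n\,(au_n,u_n)=o(|\lambda_n|)$, hence $(au_n,u_n)\to0$. If instead $|{\rm Im}\lambda_n|\ge\eta|\lambda_n|$, the second identity gives $2\,{\rm Re}\lambda_n+(au_n,u_n)=o(1)$, and since ${\rm Re}\lambda_n\to0$ we get $(au_n,u_n)\to0$, after which every term on the right of the first identity is $o(1)$, so $(Pu_n,u_n)\to0$. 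In all cases, by uniform ellipticity $\|\nabla u_n\|_{L^2}^2\le C(Pu_n,u_n)\to0$ and $\int a|u_n|^2\to0$.

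Since $a\ge c$ on $\R^d\setminus B_R$, the last limit forces $\|u_n\|_{L^2(\R^d\setminus B_R)}\to0$, so $\|u_n\|_{L^2(B_R)}\to1$. On the other hand $u_n$ is bounded in $H^1(B_{R+1})$, so by the Rellich theorem a subsequence converges in $L^2(B_{R+1})$ to some $v$ with $\nabla v=0$, i.e.\ $v$ is a constant with $\|v\|_{L^2(B_R)}=1$; in particular $v\ne0$. But $\|\nabla u_n\|_{L^2(\R^d)}\to0$ with $\|u_n\|_{L^2}=1$ also gives, along a further subsequence, $u_n\rightharpoonup u$ weakly in $L^2(\R^d)$ with $\nabla u=0$, whence $u\equiv0$ since $u\in L^2(\R^d)$; comparing the two limits on $B_{R+1}$ forces $v\equiv0$, a contradiction. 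This compactness argument replaces a Poincaré-type inequality and is dimension free.

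\textbf{The main point to watch} is the part of the region with ${\rm Re}\lambda\le0$ (the complement of the cone): there the real part of the energy identity does not by itself control $(au_n,u_n)$, and one genuinely uses the structural fact that $|{\rm Im}\lambda|$ is then comparable to $|\lambda|$ in order to extract $(au_n,u_n)\to0$ from the imaginary part; organizing the admissible region into the two cases above, and checking that the $\lambda^2$-term is always negligible relative to $|\lambda|$, is the only delicate bookkeeping. I also note that the geometric control condition is not needed for this low-frequency step — only the assumption that $a$ is bounded below outside a compact set — the geometric control condition entering instead in the intermediate- and high-frequency regimes.
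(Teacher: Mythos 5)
Your argument is correct, and the reduction at the end (a priori bound $\Rightarrow$ injectivity and closed range, plus the conjugation symmetry of the region to handle the adjoint) is exactly how the paper passes from a quadratic-form lower bound to invertibility, following its Lemma 3.1. Where you genuinely diverge is in how the key low-frequency inequality is obtained. The paper proves the quantitative estimate
$$\int_{\R^d}|\nabla u|^2\,dx+\lambda\int_{\R^d}a(x)|u(x)|^2\,dx\ \geq\ C\lambda\int_{\R^d}|u|^2\,dx$$
constructively, by introducing a cut-off $\chi_r$ at the $\lambda$-dependent scale $r=\varepsilon\sqrt{\lambda}$, absorbing the commutator $[\nabla,\chi_r]u$ into the damping term (which is bounded below where $\nabla\chi_r$ lives), and applying the Poincar\'e inequality on the ball of radius $1/r$; combining this with the real and imaginary parts of $((P+\lambda^2+\lambda a)u,u)$ gives $|((P+\lambda^2+\lambda a)u,u)|\geq C|\lambda|\|u\|^2$ on the stated region. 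You instead extract $(Pu_n,u_n)\to0$ and $(au_n,u_n)\to0$ from the same two energy identities under the contradiction hypothesis $\|f_n\|=o(|\lambda_n|)$ (your two-case bookkeeping in the region is sound: the cone complement forces $|\mathrm{Im}\lambda|\gtrsim|\lambda|$, and the near-real case forces $\mathrm{Re}\lambda\gtrsim|\lambda|>0$), and then rule out concentration in $B_R$ by Rellich plus the fact that a nonzero constant cannot be the $L^2(\R^d)$-weak limit of a sequence with vanishing gradient. What each buys: the paper's version yields an explicit scaled Poincar\'e-type inequality that transfers verbatim to $C_0^\infty(\Omega)$ for exterior domains (as the author remarks immediately after the lemma), whereas your compactness route is shorter and avoids choosing the scale $r\sim\sqrt{\lambda}$, at the cost of being non-constructive in the constant; since the rate $|\lambda|^{-1}$ is built into the contradiction hypothesis, the usual objection that compactness arguments lose quantitative rates does not apply here. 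Your closing observation that only $a\geq c$ outside a compact set (and not geometric control) is used at low frequency matches the paper's organization.
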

\begin{proof}
First we prove the following bound, for sufficient small $\lambda>0$
\begin{equation}\label{low bound eq}
\int_{\R^d} |\nabla u|^2dx+\lambda \int_{\R^d} a(x)|u(x)|^2 dx\geq C\lambda\int_{\R^d} |u|^2 dx.
\end{equation}
Taking a smooth compact support non-negative function $\chi(x)$, $\chi (x)=1$ for $|x|<1$. 
We set $\chi_r(x)=\chi(r x)$ for $r>0$. We have 
$$ \|\nabla u\|_{L^2}\geq \|\chi_r \nabla u\|_{L^2}\geq\| \nabla (\chi_r u)\|_{L^2}-\| [\nabla, \chi_r] u\|_{L^2}.$$
Since $a>c$ for $|x|>R$ and $(\nabla\chi)_r(x)= \nabla\chi(r x)=0$ for $|x|<1/r $, we obtain 
$$\| [\nabla, \chi_r] u\|_{L^2}=r \| (\nabla\chi)_r u\|_{L^2}\leq C r \left(\int_{\R^d} a(x)|u(x)|^2 dx\right)^{1/2}$$
for sufficient small $r$ and some $C>0$. Since $\chi_r u=0$ for $|x|>1/r$, by the Poincar\'e inequality, we have
$$\|\chi_r u\|_{L^2}\leq C\frac{1}{r}\| \nabla (\chi_r u)\|_{L^2}.$$
So we get
$$ \|\nabla u\|_{L^2}\geq \frac{1}{C} r \|\chi_r u\|_{L^2}- C r\left( \int_{\R^d} a(x)|u(x)|^2 dx\right)^{1/2}.$$
Taking $r=\varepsilon \sqrt{\lambda}$ for sufficient small $\varepsilon$, we have $(\ref{low bound eq})$ since $a>c$ for $|x|>R$. 
Now we prove the lemma. We use the following identity
$$((P+\lambda^2+\lambda a(\cdot) )u,u)_{L^2}=(P u,u)_{L^2}+\lambda^2\|u\|^2_{L^2}+\lambda \int_{\R^d} a(x)|u(x)|^2 dx,$$
From this identity and uniform ellipticity of $P$, we have
$$|((P+\lambda^2+\lambda a(\cdot) )u,u)_{L^2}|\geq C\|\nabla u\|_{L^2}+C| \lambda| \int_{\R^d} a(x)|u(x)|^2 dx-{\cal O}(|\lambda|^2\|u\|_{L^2}).$$
for ${\rm Re}\lambda>0$ or $|{\rm Im}\lambda|>\delta |{\rm Re}\lambda|$ if $\delta$ is sufficient small . 
So for sufficient small $|\lambda|$, applying $(\ref{low bound eq})$ we get
$$|((P+\lambda^2+\lambda a(\cdot) )u,u)_{L^2}|\geq C|\lambda| \|u\|^2_{L^2}.$$
From this bound we easily prove the existence of the inverse and the estimate by applying similar argument for Lemma \ref{lem3.1}. 
\end{proof}

\begin{remark}
The above estimate holds for $u\in C^\8_0(\Omega)$ for exterior domain $\Omega$ and we can generalize the theorem for exterior problem with Dirichlet boundary condition.
\end{remark}

By the geometric control condition, we can control high frequency part and we can prove the following statement. 
\begin{lemma} \label{highreso}Under the geometric control condition, 
$R(\lambda)=(P+\lambda^2+\lambda a(\cdot))^{-1}:L^2 \to L^2 $ exists for $\lambda=i\tau$, $\tau\in\R\backslash\{0\}$, $|\tau|>C$ for sufficient large $C>0$ and we have the following bound
$$(P+\lambda^2+\lambda a(\cdot))^{-1}={\cal O}(|\lambda|^{-1})$$
\end{lemma}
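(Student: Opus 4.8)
The plan is to reduce Lemma~\ref{highreso} to a semiclassical a priori estimate and to prove that estimate by a defect-measure/contradiction argument of the kind used for energy decay under the geometric control condition (cf.\ \cite{Ba-Le-Ra1}, \cite{Ra-Ta}, \cite{Jo}). Since $(P+\lambda^2+\lambda a)^*=P+\overline{\lambda}^2+\overline{\lambda}a$, and since scaling by $h=1/|\tau|$ turns $h^2(P+\lambda^2+\lambda a)$ with $\lambda=i\tau$, $\tau=\pm h^{-1}$, into $P_h-1\pm iha$ — where $P_h=h^2P$ is the semiclassical operator with principal symbol $p(x,\xi)=\sum_{i,j}g_{ij}(x)\xi_i\xi_j$ — it is enough to establish, for $h$ small and for both signs,
\begin{equation*}
\|(P_h-1\pm iha)u\|_{L^2}\geq ch\|u\|_{L^2},\qquad u\in H^2,
\end{equation*}
with $c>0$ independent of $h$. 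Indeed, the estimate with both signs gives injectivity of $P-\tau^2+i\tau a$ and of its adjoint $P-\tau^2-i\tau a$ (which rescale to $P_h-1\pm iha$), hence closed range and surjectivity, and $\|R(i\tau)\|_{L^2\to L^2}=h^2\|(P_h-1\pm iha)^{-1}\|={\cal O}(h)={\cal O}(|\tau|^{-1})$.

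Assume the estimate fails: there are $h_n\to0$ and $u_n\in H^2$ with $\|u_n\|_{L^2}=1$ and $s_n:=(P_{h_n}-1\pm ih_na)u_n$ satisfying $\|s_n\|_{L^2}=o(h_n)$. Taking imaginary parts in $(s_n,u_n)_{L^2}$ gives $h_n\int a|u_n|^2\,dx=\pm{\rm Im}(s_n,u_n)_{L^2}=o(h_n)$, so $\int a|u_n|^2\,dx\to0$; since $a(x)>c$ for $|x|>R$ in the situation at hand, $\int_{|x|>R}|u_n|^2\,dx\to0$, so $u_n$ is tight in $x$. From $(P_{h_n}-1)u_n=s_n\mp ih_na u_n={\cal O}(h_n)$ in $L^2$ and the ellipticity of $P_h$ one gets $\|h_n\nabla u_n\|_{L^2}={\cal O}(1)$, hence $u_n$ is $h$-oscillating and, in the limit, microlocally concentrated on the characteristic set $p^{-1}(1)$. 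Passing to a subsequence, let $\mu$ be the associated semiclassical defect measure on $T^*\R^d$: a nonnegative Radon measure, supported in $p^{-1}(1)$, with no mass at spatial or frequency infinity so that $\mu(T^*\R^d)=1$, and with $\int a\,d\mu=\lim_n\int a|u_n|^2\,dx=0$, so $\mu$ is carried by $\{a(x)=0\}$.

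The heart of the argument is the propagation of $\mu$. For real $q\in C_c^\8(T^*\R^d)$ and $B=\Op_h(q)$, I would compute $([P_h,B]u_n,u_n)_{L^2}$ in two ways. Symbolic calculus gives $([P_h,B]u_n,u_n)_{L^2}=\frac{h_n}{i}(\Op_{h_n}(\{p,q\})u_n,u_n)_{L^2}+{\cal O}(h_n^2)$. On the other hand, writing $Q=P_h-1\pm iha$ so that $Qu_n=s_n$, one has $[P_h,B]=[Q,B]\mp ih[a,B]$ with $\mp ih[a,B]={\cal O}(h^2)$ (as $[a,B]={\cal O}(h)$), while $([Q,B]u_n,u_n)_{L^2}=(Bu_n,Q^*u_n)_{L^2}-(Bs_n,u_n)_{L^2}$ with $Q^*=P_h-1\mp iha$ and $Q^*u_n=s_n\mp 2ih_na u_n$; since $\|s_n\|_{L^2}=o(h_n)$, the only surviving term is $\mp2ih_n(Bu_n,au_n)_{L^2}$. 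Dividing by $h_n$, letting $n\to\8$, and using $(Bu_n,au_n)_{L^2}\to\int aq\,d\mu=0$ (because $\mu$ is carried by $\{a=0\}$), one gets $\int\{p,q\}\,d\mu=0$ for every such $q$, i.e.\ $\mu$ is invariant under the Hamiltonian flow $\phi_t$ of $p$. Integrating the geometric control condition against $\mu$ and using flow invariance,
\begin{equation*}
0=\int a\,d\mu=\frac{1}{T}\int_0^T\int a\circ\phi_t\,d\mu\,dt=\int_{p^{-1}(1)}\langle a\rangle_T\,d\mu\geq\alpha\,\mu(T^*\R^d)=\alpha>0,
\end{equation*}
a contradiction; this proves the a priori estimate and hence the lemma.

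The step I expect to be the main obstacle is this propagation identity: one must check that the damping term, which is of exactly the critical size $h$ relative to $[P_h,\cdot]$, does not destroy the flow invariance of $\mu$. It works only because the energy identity forces $\mu$ to put no mass on $\supp a$, so that the damping contribution $\mp2i\int aq\,d\mu$ vanishes in the limit; weaving these two facts together is the crux. A secondary technical point is the non-compactness of $\R^d$: escape of mass to infinity must be excluded, but this is handled cleanly here by the standing hypothesis $a(x)>c$ for $|x|>R$ (spatial infinity) and the $h$-ellipticity of $P_h$ (frequency infinity).
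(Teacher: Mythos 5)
The paper does not actually prove this lemma: it declares the result well known and refers to \cite{Jo} and \cite{Zw}, so there is no internal argument to compare yours against. Your proof is the standard semiclassical route taken in those references: rescale by $h=1/|\tau|$, argue by contradiction, extract a semiclassical defect measure $\mu$ supported in $p^{-1}(1)$, deduce from the imaginary part of the energy identity that $\mu$ charges only $\{a=0\}$, use the commutator identity to show that the damping (of critical size $h$) does not obstruct invariance of $\mu$ under the Hamiltonian flow precisely because $\mu$ puts no mass on $\supp\, a$, and contradict the geometric control condition. These steps are carried out correctly, including the duality/closed-range argument that upgrades the two a priori estimates (for $Q$ and $Q^*$) to invertibility of $R(i\tau)$ with the $O(|\tau|^{-1})$ bound, and the functional-calculus observation that rules out escape to frequency infinity.

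The one point to flag is spatial infinity. You exclude escape of mass by invoking $a(x)>c$ for $|x|>R$. That hypothesis is part of Theorem \ref{geo ene decay}, in whose proof the lemma is first used, so your argument is complete for that application. But the lemma as stated assumes only the geometric control condition, and it is re-used in the Appendix precisely in the setting where $a$ is not assumed uniformly positive near infinity. Under the GCC alone, $\int a|u_n|^2\to 0$ only forces the mass out of the network of balls $\omega=\cup_n B(x_n,r)$ on which $a\geq\beta$; the sequence could still vanish by spreading or translate to infinity, in which case $\mu(T^*\R^d)=0$ and the final inequality $0\geq\alpha\,\mu(T^*\R^d)$ is vacuous. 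Closing this requires the additional uniformity argument of \cite{Jo} (recentering $u_n$ by translations and passing to a limiting operator and damping which still satisfy a uniform control condition), or a direct quantitative commutator estimate. You should either supply that step or state the lemma with the hypothesis $a>c$ for $|x|>R$ that you actually use.
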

This type lemma is well-known, e.g. \cite{Jo}, \cite{Zw}, so we omit the proof. From this Lemma and by Proposition $\ref{resolvent(prop)}$, we have
$$(\lambda-{\cal A})^{-1}={\cal O}(1+|\lambda|^{-1}).$$
This estimate guarantees the analytic continuation of the resolvent across the imaginary axis for high frequency region. 

For middle frequency region, we can use the Fredholm theory. We follow the argument in $\cite{Zw}$.
We take a smooth compact support function $b(x)$ satisfying, 
$$c(x)= a(x)+b(x)>c$$
for a constant $c>0$. Then $(P+\lambda^2+\lambda c(\cdot))^{-1}$ exists for pure imaginary $\lambda=i\tau$, $\tau\in \R\backslash\{0\}$ and we have
$$(P+\lambda^2+\lambda a(\cdot))(P+\lambda^2+\lambda c(\cdot))^{-1} =Id-  \lambda b(\cdot)(P+\lambda^2+\lambda c(\cdot))^{-1} $$
Since $(P+\lambda^2+\lambda c(\cdot))^{-1} $ takes $L^2$ to $H^1$ and $b(x)$ is a compact support function, by  Rellich's compact embedding theorem, $b(\cdot)(P+\lambda^2+\lambda c(\cdot))^{-1} $ is a compact operator. Thus by the Fredholm theory, the resolvent exists if and only if the following equation have a solution
$$ \lambda b(\cdot)(P+\lambda^2+\lambda c(\cdot))^{-1}u=u.$$
Taking $u=(P+\lambda^2+\lambda c(\cdot))v$, the equation can be written as follows
$$ (P+\lambda^2+\lambda a(x))v=0.$$
For pure imaginary $\lambda=i\tau$, $\tau\in \R\backslash\{0\}$, multiplying $\bar v$ to the equation, integrating on $\R^d$ and taking imaginary part, we have
$$\int_{\R^d} a(x)|v(x)|^2 dx=0.$$
So $v=0$ on the support of $a$. Then by the unique continuation of second order elliptic operators, we conclude $v=0$. Thus for pure imaginary $\lambda\not=0$, we have $u=0$. So the resolvent exist and by continuity, for middle frequency, we have the existence of the resolvent across the imaginary axis. Combining the existence of the resolvent for high and middle frequency, we have the following lemma. 
\begin{lemma}
For any $\varepsilon>0$ there exists $\delta>0$ such that $R(\lambda)=(P+\lambda^2+\lambda a(\cdot))^{-1}:L^2 \to L^2$ exists for ${\rm Re}\lambda>-\delta$ if $|{\rm Im}\lambda|>\varepsilon$. 
\end{lemma}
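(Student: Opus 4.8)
The plan is to patch the lemma together from the three frequency regimes treated in the preceding paragraphs. First I would reduce to a one-sided strip near the imaginary axis: since ${\cal A}$ is m-dissipative, $(\lambda-{\cal A})^{-1}$ — hence, by Proposition $\ref{resolvent(prop)}$, $R(\lambda)$ — exists for every ${\rm Re}\lambda>0$, so it suffices, given $\varepsilon>0$, to produce $\delta>0$ for which $R(\lambda)$ exists whenever $-\delta<{\rm Re}\lambda\le0$ and $|{\rm Im}\lambda|>\varepsilon$. I would then split this region into three zones according to $|\lambda|$: a low zone $|\lambda|<r_0$, a high zone $|{\rm Im}\lambda|>C$, and the bounded intermediate zone $r_0\le|\lambda|$, $\varepsilon<|{\rm Im}\lambda|\le C$, where $r_0$ and $C$ are the thresholds appearing in the two lemmas just proved.

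In the low zone the small-$\lambda$ lemma for $(P+\lambda^2+\lambda a(\cdot))^{-1}$ already gives existence in the sector $\{{\rm Re}\lambda>0\}\cup\{|{\rm Im}\lambda|>\delta_0|{\rm Re}\lambda|\}$ for a fixed small $\delta_0>0$; imposing $\delta<\varepsilon/\delta_0$ forces every $\lambda$ with $|{\rm Im}\lambda|>\varepsilon$ and $-\delta<{\rm Re}\lambda\le0$ into that sector. In the high zone, Lemma $\ref{highreso}$ gives $R(i\tau)={\cal O}(|\tau|^{-1})$ for $|\tau|>C$; writing $\lambda=\mu+i\tau$ and noting that $(\lambda^2+\lambda a)-((i\tau)^2+i\tau a)=\mu(\mu+2i\tau)+\mu a$ has $L^2\to L^2$ norm ${\cal O}(|\mu|(1+|\tau|))$, the operator $(P+(i\tau)^2+i\tau a)^{-1}$ composed with this perturbation has norm ${\cal O}(|\mu|)$ uniformly in $|\tau|>C$, so a Neumann series produces $R(\lambda)$ for $|{\rm Re}\lambda|$ below some $\delta_1>0$ independent of $\tau$. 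Both zones are routine given the earlier results.

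The intermediate zone is the heart of the matter, and I expect the uniformity there to be the main obstacle. For each fixed $\tau$ with $\varepsilon\le|\tau|\le C$ I would run the Fredholm argument already sketched: with $c(x)=a(x)+b(x)>c>0$, $b$ smooth and compactly supported, $(P+\lambda^2+\lambda c(\cdot))^{-1}$ exists near $\lambda=i\tau$ by the strictly positive damping case (Lemma $\ref{lem3.1}$ applied with damping $c$), and
$$(P+\lambda^2+\lambda a(\cdot))(P+\lambda^2+\lambda c(\cdot))^{-1}=Id-\lambda b(\cdot)(P+\lambda^2+\lambda c(\cdot))^{-1},$$
whose right-hand side is $Id$ minus a compact operator by Rellich's theorem. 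Hence $\lambda\mapsto P+\lambda^2+\lambda a(\cdot)$ is a holomorphic family of index-zero Fredholm operators near $i\tau$; its kernel on the imaginary axis is trivial (test $(P+\lambda^2+\lambda a(\cdot))v=0$ against $\bar v$, take imaginary parts to get $v=0$ on $\{a>0\}$, then conclude $v\equiv0$ by unique continuation for second order elliptic operators), so $R(i\tau)$ exists. Because invertibility is an open condition in an analytic operator family, each $i\tau$ carries a disc of radius $\rho(\tau)>0$ on which $R(\lambda)$ exists; the segment $\{i\tau:\varepsilon\le|\tau|\le C\}$ being compact, finitely many of these discs cover it and their radii are bounded below by some $\delta_2>0$. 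Taking $\delta=\min(\varepsilon/\delta_0,\delta_1,\delta_2)$ then yields the lemma. The delicate points are exactly those that make the intermediate zone go through — the Fredholm reduction via the compactly supported $b$, the unique continuation input, and the passage from pointwise invertibility along the compact imaginary segment to a strip of uniform positive width via openness plus compactness — while the low and high zones add essentially nothing new.
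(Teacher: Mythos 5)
Your proposal is correct and follows essentially the same route as the paper: existence in the right half-plane from dissipativity, the high-frequency Lemma \ref{highreso} extended to a strip by perturbation, and the Fredholm reduction with compactly supported $b$, unique continuation, and openness-plus-compactness for the bounded middle-frequency segment. You merely make explicit the uniformity step that the paper compresses into the phrase ``by continuity.''
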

From these lemmas, we can conclude the energy decay by applying similar argument as in section $4$ c.f. Remark $\ref{ene rem}$.
\begin{remark}
We can also prove the energy decay only assuming  the geometric control condition, by using the argument in \cite{Jo}, see Appendix. 
\end{remark}

\begin{remark}
We can also treat a small perturbation of the above case and we can get the energy decay for the case that $a(x)\geq 0$ does not hold. In the above setting, we take a smooth function $b(x)>0$ with bounded derivatives and we consider the following damped wave equation.
$$(\pl_{t}^2+P+a(x)\pl_t-\varepsilon b(x)\pl_t  )u(t,x)=0.$$
From the above argument, we get the following bound for $\lambda=i\tau$, $\tau\in\R\backslash\{0\}$
$$\| (P+\lambda^2+\lambda a(\cdot))^{-1}u\|_{L^2}\leq C|\lambda|^{-1}\|u\|_{L^2}.$$
Taking $u=(P+\lambda^2+\lambda a(\cdot))v$, we get
$$|\lambda|\| v\|_{L^2}\leq C \|(P+\lambda^2+\lambda a(\cdot))v\|_{L^2}.$$
So we have
\begin{align*}
\|(P+\lambda^2+\lambda a(\cdot)-\varepsilon\lambda b(\cdot))v\|_{L^2}&\geq \|(P+\lambda^2+\lambda a(\cdot))v\|_H-C\varepsilon|\lambda| \|v\|_{L^2}\geq C |\lambda| \|v\|_{L^2}
\end{align*}
if $\varepsilon$ is sufficiently small. This inequality implies the existence of the resolvent and we can apply previous argument to get the energy decay though for applying the previous argument, we need a small modification since at first we only know  $U(t)$ may be growth like $e^{C\varepsilon t}$ order.
\end{remark}

We also give the asymptotic profile of the solution. We take a smooth compact support function $b(x)$ satisfying
$$c(x)= a(x)+b(x)>c$$
for a constant $c>0$. We take 
$$\tilde P:=c(\cdot)^{-1/2} P c(\cdot)^{-1/2}.$$
The following theorem is a generalization of Theorem ${\ref{main}}$. 
\begin{theorem}\label{geom asym}
Let $u$, $v$ be the solution  of the following equations
\begin{align}\label{damp cauchy}
&\begin{cases}
&(\pl_t^2+P+a(\cdot)\pl_t)u=0,\\
& u|_{t=0}=u_0, \pl_t u|_{t=0}=u_1,
\end{cases}\\
&\begin{cases}
&(\pl_t+ \tilde P )v=0,\\
& v|_{t=0}=c(\cdot)^{1/2}u_0+c(\cdot)^{-1/2}u_1
\end{cases}\label{heat cauchy}
\end{align}
on $\R \times \R^d$ for $d>2$, respectively.
Under the assumption of Theorem $\ref{geo ene decay}$, there exists $C>0$ such that for any $u_0\in H^1$, $u_1\in L^2$ and $t>1$, the following asymptotic profile holds
\begin{align*}
\|u(t)-c(\cdot)^{1/2}v(t)\|_{L^2}\leq C{t^{-1}}(\|u_0\|_{H^1}+\|u_1\|_{L^2} ).
\end{align*}
\end{theorem}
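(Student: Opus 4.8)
The plan is to rerun the scheme that proves Theorem \ref{main}, with the bounded non-negative damping $B$ of that argument replaced by multiplication by $a(\cdot)$ and with the resolvent estimates of Section 3 replaced by the ones proved in this section. The first point is that the three lemmas above supply precisely the resolvent input that Sections 4--5 use: $R(\lambda)=(P+\lambda^2+\lambda a(\cdot))^{-1}$, and hence by Proposition \ref{resolvent(prop)} the resolvent $(\lambda-{\cal A})^{-1}$, extends holomorphically to $\{{\rm Re}\,\lambda>-\delta\}\cap\{|{\rm Im}\,\lambda|>\varepsilon\}$, with bound ${\cal O}(|{\rm Im}\,\lambda|^{-1})$ at high frequency and ${\cal O}(|\lambda|^{-1})$ near the origin along the keyhole contour of Proposition \ref{low decay}. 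Since $c(\cdot)=a(\cdot)+b(\cdot)$ is smooth, bounded and bounded below, $c(\cdot)^{\pm1/2}$ act boundedly on $L^2$ and on $D(P)=H^2$; thus $\tilde P=c(\cdot)^{-1/2}Pc(\cdot)^{-1/2}$ is positive self-adjoint with $D(\tilde P)=H^2$, $D(\sqrt{\tilde P})=H^1$, and the idempotent cut-off $\phi_c(P)=c(\cdot)^{-1/2}\phi(\tilde P)c(\cdot)^{1/2}$ is defined exactly as the $\phi_B(A)$ of Section 5. I would then split $u(t)=\phi_c(P)u(t)+(1-\phi_c(P))u(t)$. The high-frequency piece is handled verbatim as in Proposition \ref{high}: that proof uses only the large-$\lambda$ bounds (available here by the geometric control condition, Lemma \ref{highreso}), the existence of the resolvent on the intermediate range, and a high-frequency cut-off resolvent bound of the type of Lemma \ref{cutoff resolvent}, which merely needs $\|P\,c(\cdot)^{1/2}\tilde\phi(\tilde P)c(\cdot)^{-1/2}\|_{L^2\to L^2}<\infty$; this holds since $\tilde\phi(\tilde P):L^2\to H^2$ and $c(\cdot)^{\pm1/2}$ preserve $H^2$. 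The outcome is $\|\phi_c(P)u(t)\|_{L^2}={\cal O}(t^{-2})(\|u_0\|_{H^1}+\|u_1\|_{L^2})$, which is negligible.

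For the low-frequency piece, following Section 5 up to \eqref{asy} reduces matters, modulo ${\cal O}(e^{-\varepsilon t})$, to estimating $\frac{1}{2\pi}\int_{\cal C}(1-\phi_c(P))\,R(i\lambda)\big((i\lambda+a(\cdot))u_0+u_1\big)e^{i\lambda t}\,d\lambda$. The one step in the proof of Theorem \ref{main} that genuinely used $B\geq cId$ was the factorisation $(B\lambda+A)^{-1}=B^{-1/2}(\lambda+\tilde A)^{-1}B^{-1/2}$; since $a(\cdot)$ may vanish I would instead use the strictly positive $c(\cdot)$, invoking $(P+\lambda c(\cdot))^{-1}=c(\cdot)^{-1/2}(\lambda+\tilde P)^{-1}c(\cdot)^{-1/2}$ together with the analogue of \eqref{asym resolvent},
$$R(\lambda)=(P+\lambda c(\cdot))^{-1}-\lambda^2(P+\lambda c(\cdot))^{-1}R(\lambda)+\lambda(P+\lambda c(\cdot))^{-1}b(\cdot)R(\lambda),$$
which is just the rearrangement $P+\lambda^2+\lambda a=(P+\lambda c)+\lambda^2-\lambda b$. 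Substituting this, the leading term $(P+i\lambda c)^{-1}$, after adding the contour arms and using that $\phi(\tilde P)$ is supported away from $0$, reconstitutes the heat-semigroup profile of Theorem \ref{main} with $B=c(\cdot)$, that is, the main term appearing in the statement, up to ${\cal O}(e^{-\varepsilon t})$; the term $\lambda^2(P+i\lambda c)^{-1}R(i\lambda)$ has ${\cal O}(1)$ operator norm on $\cal C$ (both factors being ${\cal O}(|\lambda|^{-1})$ there), so, exactly as the corresponding term in Section 5, it contributes ${\cal O}(t^{-1})(\|u_0\|_{L^2}+\|u_1\|_{L^2})$, and the $i\lambda u_0$ term is the same.

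The hard part is the remaining terms carrying the auxiliary damping $b(\cdot)$: the term $\lambda(P+i\lambda c)^{-1}b(\cdot)R(i\lambda)$ from the expansion, and the piece produced by rewriting $a(\cdot)u_0=c(\cdot)u_0-b(\cdot)u_0$. Here I would exploit the compact support of $b(\cdot)$ (hence of $c(\cdot)^{-1/2}b(\cdot)$) via a local, zero-energy resolvent estimate: for $\chi\in C^\infty_0(\R^d)$ and $\lambda\to0$ in the sector one expects $\|\chi\,R(\lambda)\|_{L^2\to L^2}+\|(P+\lambda c(\cdot))^{-1}\chi\|_{L^2\to L^2}={\cal O}(|\lambda|^{(d-4)/4})$, which gains a power over the global ${\cal O}(|\lambda|^{-1})$ bound when $d>2$. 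I would derive it from $\|R(\lambda)\|={\cal O}(|\lambda|^{-1})$, the gradient bound $\|\nabla R(\lambda)f\|_{L^2}={\cal O}(|\lambda|^{-1/2})\|f\|_{L^2}$ (Lemma \ref{lem3.3} applied with $\|R(\lambda)\|_{L^2\to L^2}={\cal O}(|\lambda|^{-1})$), and a splitting of the frequency integral at $|\xi|\sim|\lambda|^{1/2}$, using that the Fourier transform of a compactly supported $L^2$ function is bounded near $\xi=0$; it is exactly here that the hypothesis $d>2$ is used. Feeding this back into the $b(\cdot)$-terms, I expect each to be either absorbed into the leading heat term or of order $t^{-\sigma}$ with $\sigma=\sigma(d)>1$ for $d>2$, hence dominated by the $t^{-1}$ error already present. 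Collecting the high-frequency estimate, the heat profile, and all the $t^{-1}$ errors then yields the assertion.

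In short, apart from invoking the resolvent estimates already established in this section and the perturbative bookkeeping around $c(\cdot)=a(\cdot)+b(\cdot)$, the only genuinely new ingredient is the local zero-energy resolvent estimate above — together with the verification that the $b(\cdot)$-terms it controls really are of order better than $t^{-1}$ when $d>2$ — and this is where I expect the main difficulty to lie; everything else is a direct transcription of the arguments of Sections 4 and 5.
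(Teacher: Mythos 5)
Your overall architecture (high/low frequency splitting, resolvent expansion around the uniformly positive damping $c=a+b$, extraction of the heat semigroup from the leading term $(P+\lambda c)^{-1}=c^{-1/2}(\lambda+\tilde P)^{-1}c^{-1/2}$) matches the paper's, and your identification of the $b$-carrying terms as the crux is correct. But the way you propose to close them has a genuine gap. First, the local zero-energy bound $\|\chi R(\lambda)\|_{L^2\to L^2}={\cal O}(|\lambda|^{(d-4)/4})$ is asserted via a flat Fourier computation that does not transfer to the variable-coefficient operator $P+\lambda^2+\lambda a$; what the energy method actually available here yields (Lemma \ref{lem3.3}, giving $\|\nabla R(\lambda)f\|_{L^2}={\cal O}(|\lambda|^{-1/2})\|f\|_{L^2}$, combined with Hardy or Sobolev for $d>2$) is the two-sided bound $\|\chi R(\lambda)\chi\|={\cal O}(1)$ --- exactly the paper's cut-off resolvent lemma --- or the one-sided $\|\chi R(\lambda)\|={\cal O}(|\lambda|^{-1/2})$. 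Second, and more seriously, even granting your bound, the term you single out from the rewriting $a(\cdot)u_0=c(\cdot)u_0-b(\cdot)u_0$ carries only \emph{one} resolvent next to $b$: its integrand on the contour is at best ${\cal O}(|\lambda|^{(d-4)/4})$, so the contour integral is ${\cal O}(t^{-d/4})$, i.e.\ $t^{-3/4}$ for $d=3$, not $o(t^{-1})$ as you assert. (Its semigroup avatar $c^{-1/2}e^{-t\tilde P}c^{-1/2}bu_0$ is likewise only ${\cal O}(t^{-d/4})\|u_0\|_{L^2}$ by Lemma \ref{heat est}, since $u_0$ is merely in $L^2$.) Your blanket claim that every $b$-term is of order $t^{-\sigma}$ with $\sigma>1$ is therefore unverified and fails in dimension $3$ with the tools you invoke.

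By contrast, the paper's treatment of the genuinely two-sided term $\lambda R_c(\lambda)b R_c(\lambda)$ is structurally different from yours: it converts both resolvents into heat semigroups via \eqref{re} and \eqref{re2} and then exploits that the compactly supported $b$ is sandwiched, applying $\|e^{-t\tilde P}\|_{L^1\to L^2}={\cal O}(t^{-d/4})$ on one side and $\|e^{-t\tilde P}\|_{L^2\to L^\infty}={\cal O}(t^{-d/4})$ on the other (Lemma \ref{heat est}), gaining $t^{-d/2}$. Your purely resolvent-based ${\cal O}(1)$-on-the-contour bound does suffice for that sandwiched term (it yields $t^{-1}$ after the contour deformation of Proposition \ref{low decay}), so the divergence of method there is harmless; the unresolved point is solely the one-sided piece $\int_{\cal C}e^{i\lambda t}R(i\lambda)b u_0\,d\lambda$. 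You would need either to show that this piece never actually arises (i.e.\ to justify replacing $a$ by $c$ in the analogue of \eqref{asy}, which the paper does in \eqref{asy2} without comment), or to supply an argument exploiting $u_0\in H^1$ that improves ${\cal O}(t^{-d/4})$ to ${\cal O}(t^{-1})$ when $d=3$; neither is given. A minor further point: Proposition \ref{high} does not go through verbatim, since its proof uses the factorization $(B\lambda+A)^{-1}=B^{-1/2}(\lambda+\tilde A)^{-1}B^{-1/2}$, which requires strict positivity of the damping; one must again work with $c$ and track $b$-corrections there too, which is why the paper only claims $t^{-1}$ (not $t^{-2}$) for the high-frequency part in this setting.
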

To prove the theorem, we need the asymptotic expansion of the resolvent near the origin. Since $c(x)$ is uniformly positive, the resolvent $R(\lambda)=(P+\lambda^2+\lambda c(\cdot) )^{-1}$ exists near the imaginary axis. We have the following identity
\begin{equation}\label{res-com}
(P+\lambda^2+\lambda c(\cdot) )^{-1}(P+\lambda^2+\lambda a(\cdot))=Id+ \lambda(P+\lambda^2+\lambda c(\cdot))^{-1}b(\cdot). 
\end{equation}
We use the following estimate for the cut-off resolvent. 
\begin{lemma}
Let $d>2$ and $\chi$ is a continuous compact support function then there exists a constant $\delta>0$ such taht the following estimate holds for $|\lambda|$ with ${\rm Re} \lambda>0$ or $|{\rm Im}\lambda|>\delta|{\rm Re}\lambda|$,  
$$\|\chi R(\lambda)\chi\|_{L^2\to L^2}={\cal O}(1).$$
\end{lemma}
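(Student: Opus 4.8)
The plan is to split the region into two overlapping pieces: a sector about the positive real axis, where a direct quadratic--form estimate suffices, and a thin cone about the imaginary axis with $|\lambda|$ small, which is a genuine low--frequency (limiting absorption) estimate. The dimensional hypothesis $d>2$ will enter exactly once, through the Sobolev embedding $\dot H^1(\R^d)\hookrightarrow L^{2d/(d-2)}(\R^d)$ and its dual $L^{2d/(d+2)}(\R^d)\hookrightarrow \dot H^{-1}(\R^d)$; this is what makes the Dirichlet form of $P$ control a scale--invariant Lebesgue norm, i.e.\ it encodes the absence of a zero eigenvalue or zero resonance for $P$ (equivalently for $\tilde P=c(\cdot)^{-1/2}Pc(\cdot)^{-1/2}$). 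By $R(\lambda)^*=R(\overline\lambda)$ and the self--conjugacy of the region it is enough to bound $\|\chi R(\lambda)\chi f\|_{L^2}$ for $f\in L^2$.

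First I would record the endpoint bound. If $g\in L^{2d/(d+2)}\cap L^2$ and $u=P^{-1}g$, then by uniform ellipticity and H\"older $\|\nabla u\|_{L^2}^2\le C(Pu,u)_{L^2}=C(g,u)_{L^2}\le C\|g\|_{L^{2d/(d+2)}}\|u\|_{L^{2d/(d-2)}}\le C'\|g\|_{L^{2d/(d+2)}}\|\nabla u\|_{L^2}$, whence $\|u\|_{L^{2d/(d-2)}}\lesssim\|g\|_{L^{2d/(d+2)}}$; since $\chi$ has compact support, H\"older gives $\|\chi h\|_{L^{2d/(d+2)}}\lesssim\|h\|_{L^2}$ and $\|\chi h\|_{L^2}\lesssim\|h\|_{L^{2d/(d-2)}}$, so $\|\chi P^{-1}\chi\|_{L^2\to L^2}<\infty$. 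The same computation, now with $u=R(\lambda)(\chi f)$, handles the sector about the positive real axis: pairing $(P+\lambda^2+\lambda c)u=\chi f$ with $u$, multiplying by $e^{-i\arg\lambda}$ and taking real parts, the contribution of $\lambda^2\|u\|_{L^2}^2+\lambda(cu,u)_{L^2}$ to the real part is $|\lambda|^2\cos(\arg\lambda)\|u\|_{L^2}^2+|\lambda|(cu,u)_{L^2}$, which is $\ge 0$ for $|\lambda|$ small since $(cu,u)_{L^2}\ge c_0\|u\|_{L^2}^2$; discarding this nonnegative term leaves $\cos(\arg\lambda)(Pu,u)_{L^2}\le|(\chi f,u)_{L^2}|$, and as long as $|\arg\lambda|\le\pi/2-\theta_0$ for a fixed $\theta_0>0$ this gives $\|u\|_{L^{2d/(d-2)}}\lesssim\|\chi f\|_{L^{2d/(d+2)}}\lesssim\|f\|_{L^2}$, hence $\|\chi u\|_{L^2}\lesssim\|f\|_{L^2}$, uniformly in such $\lambda$. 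This already covers all of $\{\mathrm{Re}\,\lambda>0\}$ that stays a definite angle away from $i\R$.

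The remaining piece --- $\lambda$ in a thin cone around the imaginary axis with $|\lambda|$ small --- is the main obstacle. There the quadratic form only yields $\|u\|_{L^2}={\cal O}(|\lambda|^{-1})\|\chi f\|_{L^2}$ and, by the computation above, $\|\nabla u\|_{L^2}={\cal O}(|\lambda|^{-1/2})\|\chi f\|_{L^2}$, neither of which closes; one must use that $0$ is a regular point of $P$ (equivalently $\tilde P$), i.e.\ establish a low--frequency limiting absorption estimate. The plan is to derive it from the endpoint bound $\|\chi P^{-1}\chi\|_{L^2\to L^2}<\infty$ together with a Fredholm/continuity argument in the spirit of the one already used for the middle--frequency range: with $b$ compactly supported making $c+b$ bounded below, $\lambda\, b(\cdot)(P+\lambda^2+\lambda(c+b))^{-1}$ is compact by Rellich, $(P+\lambda^2+\lambda c)v=0$ has no nonzero $L^2$ solution (and, thanks to $d>2$, no nonzero solution decaying merely like a resonance), so $\chi R(\lambda)\chi$ continues holomorphically up to $\lambda=0$ with $\chi R(0)\chi=\chi P^{-1}\chi$, and a holomorphic operator family continuous up to the boundary of its domain is locally bounded there, giving the uniform ${\cal O}(1)$ estimate on the cone. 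I expect the delicate point to be exactly that $P$ has only bounded, non--decaying coefficients, so one cannot reduce to $-\Delta$ by perturbation; the whole argument has to be run with $P$ itself, using nothing beyond uniform ellipticity, the Sobolev embedding valid for $d>2$, and Rellich compactness on $\supp b$. If a self--contained treatment of this cone is not desired, one may instead invoke the standard low--frequency resolvent estimates for uniformly elliptic operators in dimension $>2$ (cf.\ \cite{Jo}, \cite{Zw}).
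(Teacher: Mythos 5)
Your treatment of the sector $|\arg\lambda|\le\pi/2-\theta_0$ is correct and is essentially the paper's argument with the Sobolev embedding $\dot H^1\hookrightarrow L^{2d/(d-2)}$ playing the role that Hardy's inequality $\|\nabla u\|_{L^2}\ge C\|\langle x\rangle^{-1}u\|_{L^2}$ plays in the paper; both convert coercivity of the Dirichlet form into control of $\|\chi u\|_{L^2}$, and this is indeed where $d>2$ enters. The gap is the cone about the imaginary axis, which you declare intractable by the direct method and then only sketch. Your claim that there "the quadratic form only yields $\|u\|_{L^2}={\cal O}(|\lambda|^{-1})\|\chi f\|_{L^2}$" comes from estimating $|(\chi f,u)|$ by $\|\chi f\|_{L^2}\|u\|_{L^2}$; if you instead use the same pairing you use in the sector, $|(\chi f,u)|=|(f,\chi u)|\le\|f\|_{L^2}\|\chi u\|_{L^2}\lesssim\|f\|_{L^2}\|\nabla u\|_{L^2}$, the argument closes there too. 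What you overlooked is that near the imaginary axis the \emph{imaginary} part of the pairing supplies the missing coercivity: writing $\lambda=\mu+i\nu$,
\begin{align*}
{\rm Im}\,((P+\lambda^2+\lambda c)u,u)_{L^2}&=\nu\bigl(2\mu\|u\|_{L^2}^2+(cu,u)_{L^2}\bigr),\\
{\rm Re}\,((P+\lambda^2+\lambda c)u,u)_{L^2}&=(Pu,u)_{L^2}+(\mu^2-\nu^2)\|u\|_{L^2}^2+\mu(cu,u)_{L^2},
\end{align*}
and on $|\nu|>\delta|\mu|$ with $|\lambda|$ small, the lower bound $|z|\ge\cos\theta\,{\rm Re}\,z+\sin\theta\,|{\rm Im}\,z|$ with $\tan\theta$ large compared to $1/\delta$ gives
$$|((P+\lambda^2+\lambda c)u,u)_{L^2}|\ge C\bigl(\|\nabla u\|_{L^2}^2+|\lambda|\|u\|_{L^2}^2\bigr),$$
because $(cu,u)_{L^2}\ge c_0\|u\|_{L^2}^2$ lets $\sin\theta\,|\nu|(cu,u)_{L^2}$ absorb both $\cos\theta\,|\mu|(cu,u)_{L^2}$ and $\cos\theta\,\nu^2\|u\|_{L^2}^2$. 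Hardy or Sobolev then finishes exactly as in your sector computation. This is the paper's proof, and it covers the whole region in one step.

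Your proposed substitute for the cone does not stand on its own. Asserting that $\chi R(\lambda)\chi$ "continues holomorphically up to $\lambda=0$" and is therefore locally bounded is circular: continuity up to the boundary point $\lambda=0$ \emph{is} the uniform bound you are trying to prove, and the Fredholm argument only yields invertibility for each fixed $\lambda\ne0$ (already known from the preceding ${\cal O}(|\lambda|^{-1})$ lemma), not uniformity as $\lambda\to0$. A perturbative reduction to $P^{-1}$ fails because the perturbation $\lambda c$ has a non-decaying coefficient, so it is neither small nor compact between the weighted or Lebesgue spaces on which $P^{-1}$ is bounded --- precisely the difficulty you flag at the end. (Note also that $c=a+b$ is already uniformly positive here, so introducing a further compactly supported correction is beside the point.) Since the cone around the imaginary axis is exactly the region used in the subsequent contour deformations, this is the substantive part of the lemma, and it is left unproved in your write-up.
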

\begin{proof}
By uniform ellipticity, we have
$$|((P+\lambda^2+\lambda c(\cdot) )u,u)_{L^2}|\geq C\|\nabla u\|^2_{L^2}+C|\lambda| \int_{\R^d} c(x)|u(x)|^2 dx-C|\lambda|^2\|u\|_{L^2}^2$$
for ${\rm Re}\lambda>0$ or $|{\rm Im}\lambda|>\delta |{\rm Re}\lambda|$ if $\delta$ is sufficient small. 
Since $c(x)>c$, for sufficient small $\lambda$, we have
$$|\lambda| \int_{\R^d} c(x)|u(x)|^2 dx-|\lambda|^2\|u\|_{L^2}^2\geq 0.$$
If $d>2$, by the Hardy's inequality, we obatain
$$\|\nabla u\|^2_{L^2}\geq C\||x|^{-1}u\|_{L^2}^2\geq C\|\langle x\rangle^{-1}u\|_{L^2}^2.$$
Here $\langle x\rangle =\sqrt{1+x^2}$. So we have
\begin{align*}
\|(\langle x\rangle (P+\lambda^2+\lambda c(x) )u\|_{L^2}\| \langle x\rangle^{-1} u\|_{L^2}&\geq |(\langle x\rangle (P+\lambda^2+\lambda c(x) )u, \langle x\rangle^{-1} u)_{L^2}|\\
&=|(P+\lambda^2+\lambda c(x))u,u)_{L^2}|\geq C\|\langle x\rangle^{-1}u\|_{L^2}^2.
\end{align*}
Taking $u=R(\lambda)\langle x\rangle^{-1}v$, we obtain
\begin{equation}\label{cut-off resolvent}
\|\langle x\rangle^{-1} R(\lambda)\langle \cdot \rangle^{-1}v\|_{L^2}\leq \frac{1}{C}\|v\|_{L^2}.
\end{equation}
From this inequality, we have the estimate
\end{proof}
For $d=1,2$, we can give the following results.
\begin{lemma}
If $\chi$ is a real-valued continuous compact support function, then there exists a constant $\delta>0$ such that the following estimates hold for sufficient small $|\lambda|$ with ${\rm Re} \lambda>0$ or $|{\rm Im}\lambda|>\delta|{\rm Re}\lambda|$,  
\begin{align*}
&\|\chi R(\lambda)\chi\|_{L^2\to L^2}={\cal O}(\log|\lambda|) &(d=2),\\
&\|\chi R(\lambda)\chi\|_{L^2\to L^2}={\cal O}(|\lambda|^{-1/2}) &(d=1).
\end{align*}
\end{lemma}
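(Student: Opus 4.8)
The plan is to run the $d>2$ argument of the preceding lemma, but to replace Hardy's inequality (which is not available in dimensions $1$ and $2$) by a scale--sensitive interpolation inequality that produces the logarithmic ($d=2$) or power ($d=1$) loss. Throughout, write $c(\cdot)=a(\cdot)+b(\cdot)\geq c>0$, $R(\lambda)=(P+\lambda^2+\lambda c(\cdot))^{-1}$, and fix $R_0>0$ with $\supp\chi\subset\{|x|\leq R_0\}$. The existence of $R(\lambda):L^2\to L^2$ on the region $\{{\rm Re}\,\lambda>0\}\cup\{|{\rm Im}\,\lambda|>\delta|{\rm Re}\,\lambda|\}$ for small $|\lambda|$ follows as in Lemma~\ref{lem3.1} since $c$ is uniformly positive, so only the norm bound has to be established; moreover it suffices to treat ${\rm Re}\,\lambda\geq0$, the remaining sliver ${\rm Re}\,\lambda<0$, $|{\rm Im}\,\lambda|>\delta|{\rm Re}\,\lambda|$ being a small perturbation handled exactly as in Lemmas~\ref{lem3.1}--\ref{lem3.2}.

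First I would record the coercivity estimate, which is identical to the $d>2$ case. Given $v\in L^2$, set $u=R(\lambda)\chi v$, so that $(P+\lambda^2+\lambda c(\cdot))u=\chi v$. Pairing with $u$ and using the uniform ellipticity of $P$ together with $c\geq c>0$, one obtains, exactly as in the proof of the preceding lemma,
$$|((P+\lambda^2+\lambda c(\cdot))u,u)_{L^2}|\geq C\|\nabla u\|_{L^2}^2+C|\lambda|\,\|u\|_{L^2}^2-C|\lambda|^2\|u\|_{L^2}^2$$
on the stated region. Absorbing the last term for $|\lambda|$ small and using $|((P+\lambda^2+\lambda c(\cdot))u,u)_{L^2}|=|(v,\chi u)_{L^2}|\leq\|v\|_{L^2}\|\chi u\|_{L^2}$ gives
$$\|\nabla u\|_{L^2}^2+|\lambda|\,\|u\|_{L^2}^2\leq C\,\|v\|_{L^2}\,\|\chi u\|_{L^2}.\qquad(\star)$$

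The new ingredient is a purely functional interpolation inequality, obtained by rescaling $x\mapsto|\lambda|^{-1/2}x$; note the variable coefficients play no role here, only the $L^2$ norm (which rescales by a factor $|\lambda|$) and $\|\nabla\cdot\|_{L^2(\R^2)}$ (scale invariant in dimension $2$). For $d=1$, the embedding $H^1(\R)\hookrightarrow L^\infty(\R)$ with $\|w\|_{L^\infty}^2\leq 2\|w\|_{L^2}\|\nabla w\|_{L^2}$ yields, after Young's inequality with weight $|\lambda|^{1/2}$,
$$\|\chi u\|_{L^2}^2\leq C\|u\|_{L^2}\|\nabla u\|_{L^2}\leq C\big(|\lambda|^{1/2}\|u\|_{L^2}^2+|\lambda|^{-1/2}\|\nabla u\|_{L^2}^2\big).$$
For $d=2$ I would instead combine the subcritical Sobolev bound $\|w\|_{L^p(\R^2)}\leq C\sqrt{p}\,\|w\|_{H^1(\R^2)}$ with Hölder's inequality on a ball $B_\rho$, $\|w\|_{L^2(B_\rho)}\leq C\rho^{1-2/p}\|w\|_{L^p(B_\rho)}$, and optimize by taking $p\sim|\log\rho|$, which gives $\|w\|_{L^2(B_\rho)}^2\leq C\rho^2|\log\rho|\,\|w\|_{H^1(\R^2)}^2$ for $\rho$ small. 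After rescaling (so that $\supp\chi$ becomes a ball of radius $\rho\sim|\lambda|^{1/2}$) this reads
$$\|\chi u\|_{L^2}^2\leq C\,|\log|\lambda||\,\big(|\lambda|\,\|u\|_{L^2}^2+\|\nabla u\|_{L^2}^2\big).$$

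Finally, inserting $(\star)$ into these two inequalities makes the right--hand side collapse to $C\,\omega(\lambda)\,\|v\|_{L^2}\,\|\chi u\|_{L^2}$, with $\omega(\lambda)=|\lambda|^{-1/2}$ for $d=1$ and $\omega(\lambda)=|\log|\lambda||$ for $d=2$; cancelling one factor $\|\chi u\|_{L^2}$ gives $\|\chi R(\lambda)\chi v\|_{L^2}=\|\chi u\|_{L^2}\leq C\,\omega(\lambda)\,\|v\|_{L^2}$, which is the assertion. The step I expect to be the main obstacle is the $d=2$ interpolation inequality with the sharp factor $\rho^2|\log\rho|$: a cruder estimate $\rho^{2-\varepsilon}$ would only produce a $|\lambda|^{-\varepsilon}$ resolvent bound, so one genuinely has to track the $\sqrt{p}$ growth of the Sobolev constant on $\R^2$ and balance it against $\rho^{-2/p}$ by choosing $p$ logarithmically in $\rho$. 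Everything else is routine rescaling and absorption.
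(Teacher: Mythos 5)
Your proposal is correct, and it reaches the paper's estimates by the same overall skeleton (coercivity of the sesquilinear form, a localized uncertainty-principle inequality, then the duality/cancellation $|(Lu,u)|=|(v,\chi u)|\leq\|v\|\,\|\chi u\|$ with $u=R(\lambda)\chi v$), but the crucial interpolation inequality
$\|\chi u\|_{L^2}^2\leq C\,\omega(\lambda)\bigl(\|\nabla u\|_{L^2}^2+|\lambda|\,\|u\|_{L^2}^2\bigr)$
is established by a genuinely different mechanism. The paper works on the Fourier side: it splits off the low frequencies with a cut-off $\tilde\chi(D)$, bounds the low-frequency piece through Hausdorff--Young, and reduces the constants to the explicit integral $\|(|\xi|+|\lambda|^{1/2})^{-1}\tilde\chi(\xi)\|_{L^2}^2$, which is $\log|\lambda|$ for $d=2$ and $|\lambda|^{-1/2}$ for $d=1$. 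You instead work in physical space: for $d=1$ the $L^2$--$L^\infty$ interpolation $\|w\|_{L^\infty}^2\leq 2\|w\|_{L^2}\|\nabla w\|_{L^2}$ plus weighted Young, and for $d=2$ the $\sqrt p$-growth of the subcritical Sobolev constant combined with H\"older on a ball and the parabolic rescaling $x\mapsto|\lambda|^{-1/2}x$ (under which $\|\nabla\cdot\|_{L^2(\R^2)}$ is scale invariant), optimizing $p\sim|\log\rho|$. Both routes produce the same sharp factors; the paper's multiplier computation is arguably more mechanical and generalizes immediately to any dimension, while your argument makes the scaling origin of the $\log|\lambda|$ and $|\lambda|^{-1/2}$ losses transparent and avoids the Fourier cut-off entirely. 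The one point you rightly flag as delicate --- tracking the $\sqrt p$ constant and balancing it against $\rho^{-2/p}$ --- is indeed where a careless estimate would lose the sharp rate, but as written your balance $p\sim|\log\rho|$ gives exactly $\rho^2|\log\rho|$, so the argument closes.
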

\begin{proof}
By uniform ellipticity, we have
\begin{equation}\label{est1}
|((P+\lambda^2+\lambda c(\cdot) )u,u)_{L^2}|\geq C(\|\nabla u\|^2_{L^2}+|\lambda|\|u\|_{L^2}^2)\geq C \|(|\nabla|+|\lambda|^{1/2})u\|^2_{L^2}.
\end{equation}
We prove the following estimate.
\begin{equation}\label{est2}
\|(|\nabla|+|\lambda|^{1/2})u\|^2_{L^2}\geq 
\begin{cases}
&C\frac{1}{\log|\lambda|}\|\chi u\|_{L^2}^2, \quad(d=2),\\
&C|\lambda|^{1/2}\|\chi u\|_{L^2}^2, \quad(d=1).
\end{cases}
\end{equation}
We take a compact support cut off function $\tilde \chi(\xi)$ which satisfies $\tilde \chi(\xi)=1$ if $|\xi|$ is sufficient small. Then by the Plancherel theorem and $(|\xi|+|\lambda|^{1/2})^{-1}(1-\tilde\chi(\xi))$ is bounded for sufficient small $\lambda$, we have 
\begin{align*}
\|\chi (|\nabla|+|\lambda|^{1/2})^{-1}u\|^2_{L^2}&\leq\|\chi (|\nabla|+|\lambda|^{1/2})^{-1}{\tilde \chi}(D)u\|^2_{L^2}+ \|(|\nabla|+|\lambda|^{1/2})(1-\tilde\chi(D))u\|^2_{L^2}\\
&\leq \|(|\nabla|+|\lambda|^{1/2})^{-1}{\tilde \chi}(D)u\|^2_{L^\8}+ \|(|\xi|+|\lambda|^{1/2})^{-1}(1-\tilde\chi(\xi))\widehat u\|^2_{L^2}\\
&\leq \|(|\xi|+|\lambda|^{1/2})^{-1}{\tilde \chi}(\xi)u\|^2_{L^1}+C \|u\|^2_{L^2}\\
&\leq \|(|\xi|+|\lambda|^{1/2})^{-1}{\tilde \chi}(\xi)\|_{L^2}^2 \|u\|^2_{L^2}+C \|u\|^2_{L^2}.
\end{align*}
By easy computations, we obtain
$$\|(|\xi|+|\lambda|^{1/2})^{-1}{\tilde \chi}(\xi)\|_{L^2}^2\leq 
\begin{cases}
&C\log|\lambda|, \quad(d=2),\\
&C|\lambda|^{-1/2}, \quad(d=1).
\end{cases}
$$
Taking $u=(|\nabla|+|\lambda|^{1/2})v$, we have $(\ref{est2})$. From $(\ref{est1})$ and $(\ref{est2})$, we get
\begin{equation*}
\|\chi u\|_{L^2}^2\leq 
\begin{cases}
&C\log|\lambda||((P+\lambda^2+\lambda c(x) )u,u)_{L^2}|, \quad(d=2),\\
&C|\lambda|^{-1/2}|((P+\lambda^2+\lambda c(x) )u,u)_{L^2}|, \quad(d=1).
\end{cases}
\end{equation*}
Now we take $u=(P+\lambda^2+\lambda c(x) )^{-1} \chi v$ then we have 
\begin{equation*}
\|\chi u\|_{L^2}\|\chi (P+\lambda^2+\lambda c(x) )^{-1} \chi v\|_{L^2}\leq 
\begin{cases}
&C\log|\lambda|{|((v,\chi u)_{L^2}\leq C  \log|\lambda|}\|v\|_{L^2}\|\chi u\|_{L^2}, \quad(d=2),\\
&C|\lambda|^{-1/2}|((v,\chi u)_{L^2}\leq C|\lambda|^{-1/2}\|v\|_{L^2}\|\chi u\|_{L^2} , \quad(d=1).
\end{cases}
\end{equation*}
Dividing by $\|\chi u\|_{L^2}$, we have proved.
\end{proof}
We take the inverse of the right hand side of the identity $(\ref{res-com})$. 
We define
$$\tilde R(\lambda)=\sqrt{b(\cdot)}\lambda(P+\lambda^2+\lambda c(\cdot))^{-1}\sqrt{b(\cdot)}.$$
Since $b(x)$ is compact support function, by the above lemmas, we can assume $\|\tilde R(\lambda)\|_{L^2 \to L^2}<1$ for sufficient small $\lambda$. For such $\lambda$, we can construct the inverse by Neumann series.
\begin{align*}
&(Id- \lambda(P+\lambda^2+\lambda c(\cdot))^{-1}b(\cdot))^{-1}\\
&=Id+\sum_{n=0}^\8 (-1)^n\lambda(P+\lambda^2+\lambda c(\cdot) )^{-1}\sqrt{b(\cdot)}\{\lambda\tilde R(\lambda)\}^{n}\sqrt{b(\cdot)}.
\end{align*}
Thus we have the following identity for the resolvent $R(\lambda)=(P+\lambda^2+\lambda a(\cdot))^{-1}$ and $R_c(\lambda)=(P+\lambda^2+\lambda c(\cdot))^{-1}$
\begin{equation}\label{res-exp}
\begin{split}
R(\lambda)&=(P+\lambda^2+\lambda c(\cdot))^{-1}\\
&\ \ +\sum_{n=0}^\8(-1)^n \lambda(P+\lambda^2+\lambda c(\cdot))^{-1}\sqrt{b(\cdot)}\{\lambda\tilde R(\lambda)\}^{n}\sqrt{b(\cdot)}(P+\lambda^2+\lambda c(\cdot))^{-1}\\
&=R_c(\lambda)+\lambda R_c(\lambda){b(\cdot)}R_c(\lambda)+R_1(\lambda).
\end{split}
\end{equation}
Here $R_1(\lambda)$ is the reminder term. So we have the asymptotic expansion of the resolvent. We shall prove Theorem $\ref{damp cauchy}$. If $d>2$ then $\tilde R(\lambda)$ is bounded so we have $\|\lambda \tilde R(\lambda)\|_{L^2\to L^2}={\cal O}(|\lambda|)$ and $\|R_1(\lambda)\|_{l^2\to L^2}={\cal O}(1)$ near the origin. To the first term $R_c(\lambda)$, we can apply similar argument in Theorem ${\ref{main}}$ and get the heat type asymptotic. $R_1(\lambda)$ term becomes lower order term since $\|R_1(\lambda)\|={\cal O}(1)$ is lower than the main order $|\lambda|^{-1}$ and we can get $t^{-1}$ order decay.
The main difference is the second term and we study the term. We have $\|\phi_{c}(P)\lambda R_c(\lambda){b(x)}R_c(\lambda)\|={\cal O}(1)$ if $\phi_c(P)$ cut low frequency so, corresponding to Proposition \ref{high}, we have $t^{-1}$ order decay to the high frequency part. For low frequency part, we follow the proof of Theorem {\ref{main}}. To obtain $(\ref{asy})$, we do not use the identity $(\ref{asym resolvent})$ and we only use these estimate of the resolvent which is similar in this situation. Thus the main term is $\tilde J_3$ and we have
\begin{equation}\label{asy2}
\begin{split}
u(t)=\frac{1}{2\pi}\int_{{\cal C}}\tilde \phi_{c}(P)e^{i\lambda t}\left(R(i\lambda)(i\lambda+c(\cdot))u_0+R(i\lambda)u_1\right)d\lambda+{\cal O} (t^{-1}).
\end{split}
\end{equation}
Inserting $(\ref{res-exp})$, since $R_1(\lambda)$ and the first term are a lower order, we get 
\begin{equation}
\begin{split}
u(t)&=\frac{1}{2\pi}\int_{{\cal C}}\tilde \phi_{c}(P)e^{i\lambda t}\left(R_c(i \lambda)+i \lambda R_c(i \lambda){b(\cdot)}R_c(i \lambda))(c(\cdot)u_0+u_1\right)d\lambda+{\cal O} (t^{-1})\\
&=L_1+\frac{1}{2\pi}L_2+{\cal O} (t^{-1}).
\end{split}
\end{equation}
In the above integral the first term is similar to the strictly positive case, so we can get the following asymptotic profile by applying similar argument to prove Theorem {\ref{main}}. 
$$L_1=c(\cdot)^{-1/2}e^{-t \tilde P} c(\cdot)^{-1/2}(c(\cdot)u_0+u_1)+{\cal O} (t^{-1}).$$
To estimate $L_2$ part, we recall the following properties of our heat propagator.
\begin{lemma}\label{heat est}
There exists a constant $C>0$ such that the following bounds hold for $u\in L^1\cap L^2$ and $t>1$
\begin{align*}
&\|e^{-t \tilde P}u\|_{L^1}\leq C \|u\|_{L^1},\\
&\|e^{-t \tilde P}u\|_{L^2}\leq C t^{-d/4}\|u\|_{L^1},\\
&\|e^{-t \tilde P}u\|_{L^\8}\leq C t^{-d/4}\|u\|_{L^2}.
\end{align*}
\end{lemma}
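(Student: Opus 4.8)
The plan is to derive the three bounds by the classical Nash--iteration (ultracontractivity) argument, using that $\tilde P=c(\cdot)^{-1/2}Pc(\cdot)^{-1/2}$ is, up to the bounded multiplier $c^{\pm1/2}$, unitarily equivalent to the operator $c(\cdot)^{-1}P$, which generates a Markovian semigroup. Recall that $\tilde P$ is a non-negative self-adjoint operator on $L^2=L^2(dx)$ with domain $H^2$ (since $c^{\pm1/2}$ are bounded on $H^2$), so $e^{-t\tilde P}$ is a self-adjoint contraction semigroup on $L^2$; this already gives $\|e^{-t\tilde P}\|_{L^2\to L^2}\le1$. I will prove all three bounds for every $t>0$, so the restriction $t>1$ requires no separate small-time discussion.

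\textbf{Step 1: uniform $L^1$-boundedness.} The multiplication operator $M_{c^{1/2}}$ is a unitary isomorphism from $L^2(c\,dx)$ onto $L^2(dx)$, and a short computation gives $\tilde P=M_{c^{1/2}}\,(c^{-1}P)\,M_{c^{1/2}}^{-1}$, hence $e^{-t\tilde P}u=c^{1/2}\,e^{-tc^{-1}P}(c^{-1/2}u)$. The operator $c^{-1}P$ is the non-negative self-adjoint operator on $L^2(c\,dx)$ associated with the symmetric form $\mathcal E(u,v)=\sum_{i,j}\int g_{ij}\pl_iu\,\pl_j\bar v\,dx$ on $H^1$; since this form involves only gradient terms (no killing, no jump part) it is a Dirichlet form on $L^2(c\,dx)$, so $e^{-tc^{-1}P}$ is Markovian and therefore a contraction on $L^p(c\,dx)$ for all $p\in[1,\8]$. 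Because $0<c_0\le c(x)\le c_1$, one has $L^p(c\,dx)=L^p(dx)$ with equivalent norms and multiplication by $c^{\pm1/2}$ is bounded on $L^1$ and $L^\8$; chaining these facts yields $\|e^{-t\tilde P}\|_{L^1\to L^1}\le C$ uniformly in $t>0$, which is the first estimate (and likewise $\|e^{-t\tilde P}\|_{L^\8\to L^\8}\le C$).

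\textbf{Step 2: a Nash inequality and the ODE argument.} By uniform ellipticity, $(\tilde P u,u)_{L^2}=\mathcal E(c^{-1/2}u,c^{-1/2}u)\ge \tfrac1C\|\nabla(c^{-1/2}u)\|_{L^2}^2$. Substituting $v=c^{-1/2}u$ into the classical Nash inequality $\|v\|_{L^2}^{2+4/d}\le C_N\|\nabla v\|_{L^2}^2\|v\|_{L^1}^{4/d}$ on $\R^d$ and using once more that $c^{\pm1/2}$ is bounded above and below gives a Nash inequality for $\tilde P$: $\|u\|_{L^2}^{2+4/d}\le C'(\tilde P u,u)_{L^2}\,\|u\|_{L^1}^{4/d}$. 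Now for $u_0\in L^1\cap L^2$ put $y(t)=\|e^{-t\tilde P}u_0\|_{L^2}^2$; then $y'(t)=-2(\tilde P e^{-t\tilde P}u_0,\,e^{-t\tilde P}u_0)_{L^2}\le -2(C')^{-1}y(t)^{1+2/d}\,\|e^{-t\tilde P}u_0\|_{L^1}^{-4/d}\le -a\,y(t)^{1+2/d}$, where by Step 1 the constant $a$ depends only on $\|u_0\|_{L^1}$. Integrating $\tfrac{d}{dt}\big(y^{-2/d}\big)\ge \tfrac{2a}{d}$ gives $y(t)\le Ct^{-d/2}\|u_0\|_{L^1}^2$, i.e. $\|e^{-t\tilde P}u_0\|_{L^2}\le Ct^{-d/4}\|u_0\|_{L^1}$, the second estimate. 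The third estimate follows by duality: since $\tilde P$ is self-adjoint, $(e^{-t\tilde P})^*=e^{-t\tilde P}$, and the adjoint of an operator $L^1\to L^2$ of norm $M$ is an operator $L^2\to L^\8$ of norm $M$; applied to the second estimate this gives $\|e^{-t\tilde P}\|_{L^2\to L^\8}\le Ct^{-d/4}$.

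\textbf{Main obstacle.} Everything after Step 1 is the standard Nash computation plus a duality argument; the one point that needs care is Step 1, namely the uniform $L^1$ (and $L^\8$) boundedness of $e^{-t\tilde P}$. This rests on the Dirichlet-form (Beurling--Deny) structure of $c^{-1}P$ --- equivalently on Aronson-type Gaussian bounds for the divergence-form operator underlying it --- together with the transfer through the bounded multipliers $c^{\pm1/2}$. Once that positivity/contractivity input is in place, the remaining steps are routine.
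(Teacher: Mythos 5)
Your proof is correct and follows essentially the same route as the paper: a Nash inequality adapted to $\tilde P$, the uniform $L^1\to L^1$ bound, the resulting differential inequality for $\|e^{-t\tilde P}u\|_{L^2}^2$, and self-adjointness/duality for the $L^2\to L^\infty$ estimate. The only difference is how the $L^1$ contractivity is justified: you invoke the Markovian (Dirichlet-form) structure of $c^{-1}P$ on $L^2(c\,dx)$ conjugated by the bounded multipliers $c^{\pm1/2}$, whereas the paper verifies by hand that the weighted mass $\int c^{1/2}u\,dx$ is conserved and cites positivity preservation --- two formulations of the same underlying fact.
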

\begin{proof}
Probably this result is well know c.f. \cite{Da} and we give an outline of the proof. We use the following Nash's inequality 
$$\|u\|_{L^2}^{2+4/d}\leq C(\tilde Pu, u)_{L^2}\|c(\cdot)^{1/2}u\|_{L^1}^{2/d}.$$
Since $P$ is uniformly elliptic and $c(\cdot)$ is strictly positive  bounded function, this inequality is easily obtained from the following usual Nash's inequality
$$\|u\|_{L^2}^{2+4/d}\leq C\|\nabla u\|^2_{L^2}\|u\|_{L^1}^{4/d}.$$
Let $u(t)=u(t,\cdot)=e^{-t\tilde P}u_0$, then by the integration by parts, we have
$$\frac{d}{dt}\int_{\R^d} c(x)^{1/2}u(t,x)dx=-\int_{\R^d}\pl_i\left(g_{ij}(x)\pl_j c(x)^{-1/2}u(t,x)\right)dx=0.$$
Thus the propagator $e^{t \tilde P}$ preserves this modified total heat. Under our assumption, $e^{t \tilde P}$ also preserve positivity see \cite{Fr}. So we can only consider the positive solution and we assume $u(t)$ is positive. From the conservation of the modified total heat, we have the $L^1L^1$ bound. Next we define
$$H(t):=\frac{\|u(t)\|_{L^2}^2}{\|c(\cdot)^{1/2}u(t)\|_{L^1}^2}.$$
Since the modified total heat is preserved, by the Nash's inequality, we have
$$\frac{d}{dt}H(t)=-\frac{2(\tilde Pu,u)}{\|c(\cdot)^{1/2}u(t)\|_{L^1}^2}\leq- C H(t)^{1+2/d}.$$
This inequality implies
$$-H(t)^{-2/d} \leq H(0)^{-2/d}-H(t)^{-2/d}\leq -Ct.$$
So we obtain
$$\|u(t)\|_{L^2}\leq Ct^{-d/4} \|u_0\|_{L^1}.$$
From this estimate, we can prove the $L^\8$ estimates by using duality argument. 
For positive $v$ and $s\leq t$, we take
$$v(s,x)=v(s):=e^{(s-t )\tilde P}v.$$
Then we have
\begin{align*}
\frac{d}{ds}\int_{\R^d} u(s,x)v(s,x)dx=\int_{\R^d}\left (-\tilde P u(s,x)\right)v(s,x)dx+\int_{\R^d} u(s,x)\left(\tilde P v(s,x)\right)dx=0.
\end{align*}
Thus the above integral does not depend $s$ so taking $s=t$ and $s=t/2$, we have
$$\int_{\R^d} u(t,x)vdx=\int_{\R^d} u(t/2,x)v(t/2,x)dx\leq \|u(t/2)\|_{L^2}\|v(t/2)\|_{L^2}\leq C 
 t^{-d/4}\|u\|_{L^2}\|v\|_{L^1}.
 $$ 
By this inequality and duality, we get the $L^\8L^2$ estimate. 
\end{proof}
Using this bound, we estimate $L_2$ in the decomposition $(\ref{asy2})$. From the identity $(\ref{asym resolvent})$, we have 
\begin{equation}\label{asym resolvent2}
R_c(\lambda)=c(\cdot)^{-1/2}(\lambda+\tilde P)^{-1}c(\cdot)^{-1/2}-\lambda^2c(\cdot)^{-1/2}(\lambda+\tilde P)^{-1}c(\cdot)^{-1/2}R_c(\lambda).
\end{equation}
Inserting this identity to $L_2$, since $\lambda^2c(\cdot)^{-1/2}(\lambda+\tilde P)^{-1}c(\cdot)^{-1/2}R_c(\lambda)={\cal O}(|\lambda|)$, we obtain
\begin{align*}
L_2&= \int_{{\cal C}}\tilde \phi_{c}(P)e^{i\lambda t} i \lambda c(\cdot)^{-1/2}(\lambda+\tilde P)^{-1}c(\cdot)^{-1/2}{b(x)}c(\cdot)^{-1/2}(\lambda+\tilde P)^{-1}c^{-1/2}(c(\cdot)u_0+u_1)d\lambda+{\cal O} (t^{-1})\\
&=L_3+{\cal O} (t^{-1}).
\end{align*}
So we estimate $L_3$ part. We use identities $(\ref{re} )$ and $(\ref{re2} )$. By these identities, we have
\begin{align*}
L_3&= \int_{{\cal C}}\tilde \phi_{c}(P) i \lambda c(\cdot)^{-1/2}(\lambda+\tilde P)^{-1}e^{-t\tilde P/2}c(\cdot)^{-1/2}{b(\cdot)}c(\cdot)^{-1/2}e^{-t\tilde P/4}(i\lambda+\tilde P)^{-1}c(\cdot)^{-1/2}(c(\cdot)u_0+u_1)d\lambda\\
&+\int_{{\cal C}}\tilde \phi_{c}(P)e^{i\lambda t} i \lambda c^{-1/2}\int^{t/2}_0 e^{-t_1(i\lambda+\tilde P)}dt_1c^{-1/2}{b(\cdot)}c(\cdot)^{-1/2}(i\lambda+\tilde P)^{-1}e^{-t(i\lambda+\tilde P)/4}c^{-1/2}(c(\cdot)u_0+u_1)d\lambda\\ 
&+\int_{{\cal C}}\tilde \phi_{c}(P)e^{i\lambda t} i \lambda c(\cdot)^{-1/2}(\lambda+\tilde P)^{-1}e^{-t\tilde P/2}c(\cdot)^{-1/2}{b(\cdot)}c(\cdot)^{-1/2}\int^{t/4}_0 e^{-t_2(i\lambda+\tilde P)}dt_2c(\cdot)^{-1/2}(c(\cdot)u_0+u_1)d\lambda\\
&+\int_{{\cal C}}\tilde \phi_{c}(P)e^{i\lambda t} i \lambda c(\cdot)^{-1/2}\int^{t/2}_0 e^{-t_1(i\lambda+\tilde P)}dt_1c(\cdot)^{-1/2}{b(\cdot)}c(\cdot)^{-1/2}\int^{t/4}_0 e^{-t_2(i\lambda+\tilde P)}dt_2c(\cdot)^{-1/2}(c(\cdot)u_0+u_1)d\lambda\\
&=L_4+L_5+L_6+L_7
\end{align*}
Since $b(x)$ is compact support function and $d>2$, from Lemma $\ref{heat est}$,  we have
\begin{align*}
\|e^{-t\tilde P/2}c(x)^{-1/2}{b(x)}c(x)^{-1/2}e^{-t\tilde P/4}\|_{L^2\to L^2}&\leq \|e^{-t\tilde P/2}\|_{L^1\to L^2}\|c(x)^{-1/2} {b(x)}c(x)^{-1/2}e^{-t\tilde P/4}\|_{L^2\to L^1}\\
&\leq C t^{-d/4} \|c(x)^{-1/2} {b(x)}c(x)^{-1/2}\|_{L^1}\|e^{-t\tilde P/4}\|_{L^2\to L^\8}\\
&\leq C t^{-d/2}\leq C t^{-1}.
\end{align*}
From this estimate, we know $L_4$ is a lower order term. For $L_5$, since 
$$\left\|\int^{t/2}_0 e^{-t_1\tilde P}dt_1\right\|_{L^1\to L^2}=\max\{1, t^{-d/4+1+\varepsilon}\}$$
for any $\varepsilon>0$, we have
\begin{align*}
&\left\|\int^{t/2}_0e^{i\lambda(3t-4t_1)/4}( i \lambda c^{-1/2} e^{-t_1\tilde P}c^{-1/2}{b(\cdot)}c(\cdot)^{-1/2}(i\lambda+\tilde P)^{-1}e^{-t\tilde P/4}c^{-1/2}dt_1\right\|_{L^2\to L^2}\\
&\leq \int^{t/2}_0\left\|e^{i\lambda(3t-4t_1)/4}  c^{-1/2} e^{-t_1\tilde P}\right\|_{L^1\to L^2}\left\|c^{-1/2} b(\cdot) c(\cdot)^{-1/2}e^{-t\tilde P/4}i \lambda(i\lambda+\tilde P)^{-1}c^{-1/2}\right\|_{L^2\to L^1}dt_1\\
&\leq \int^{t/2}_0\left\|e^{i\lambda(3t-4t_1)/4}c^{-1/2} e^{-t_1\tilde P}\right\|_{L^1\to L^2}dt_1\left\|c^{-1/2} b(\cdot) c(\cdot)^{-1/2}\|_{L^1}\|e^{-t\tilde P/4}\|_{L^2\to L^{\8}} \|i \lambda(i\lambda+\tilde P)^{-1}c^{-1/2}\right\|_{L^2\to L^2}\\
&\leq C \max\{ e^{-3t{\rm Im} \lambda/4},  e^{-t{\rm Im} \lambda/4}\}\max\{1, t^{-d/4+1+\varepsilon}\} t^{-d/4}.
\end{align*}
From this estimate the integrand is ${\cal O}(1)$ as $\lambda$ variable and by contour deformation as in the proof of Proposition $\ref{low decay}$, we get further $t^{-1}$ decay. Thus we have
$$\|L_5\|_{L^2}\leq C \max\{1, t^{-d/4+1+\varepsilon}\} t^{-d/4-1}\leq Ct^{-1}.$$
We can apply similar argumetn to estimate $L_6$ term and we get $t^{-1}$ order decay. For $L_7$ term, the integrand is holomorphic so thanks to the $e^{-i\lambda (t-t_1-t_2)}$ factor, we can get the exponential decay estimate, by contour deformation across the origin. 
Thus we have proved Theorem $\ref{geom asym}$. For $d=1,2$, in the same way, we can also prove the following result.
\begin{theorem}
Under the assumption of Theorem $\ref{geo ene decay}$, for any $\varepsilon>0$ and $d=1,2$, there exists $C>0$ such that for $t>2$ the following estimates between the solution $u$  of the equation $(\ref{damp cauchy})$ and the solution $v$ of the equation $(\ref{heat cauchy})$ hold
\begin{align*}
&\|u(t)-c(\cdot)^{1/2}v(t)\|_{L^2}\leq C{t^{-1}}\log t (\|u_0\|_{H^1}+\|u_1\|_{L^2} ), &(d=2),\\
&\|u(t)-c(\cdot)^{1/2}v(t)\|_{L^2}\leq C{t^{-1/2}}(\|u_0\|_{H^1}+\|u_1\|_{L^2} ), &(d=1). 
\end{align*}
\end{theorem}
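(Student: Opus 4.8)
This is the low-dimensional ($d=1,2$) counterpart of Theorem~\ref{geom asym}, and the plan is to rerun the proof of that theorem, keeping track of the two inputs that degrade when $d\le 2$: the cut-off resolvent estimates near $\lambda=0$ for $R_c(\lambda)=(P+\lambda^2+\lambda c(\cdot))^{-1}$, which (Hardy's inequality being unavailable) are only $\|\chi R_c(\lambda)\chi\|_{L^2\to L^2}={\cal O}(\log|\lambda|)$ for $d=2$ and ${\cal O}(|\lambda|^{-1/2})$ for $d=1$; and the diffusion bound of Lemma~\ref{heat est}, which gives only $\|e^{-t\tilde P}\|_{L^1\to L^2}={\cal O}(t^{-d/4})$. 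Everything upstream is dimension-free: Theorem~\ref{geo ene decay} and the resolvent bounds for $(\lambda-{\cal A})^{-1}$ hold for all $d$ (the low-frequency bound $R(\lambda)={\cal O}(|\lambda|^{-1})$ comes from the Poincar\'e argument, valid in every dimension), and the contour-deformation argument behind Proposition~\ref{low decay} and the proof of Theorem~\ref{main} reduces $u(t)$, up to ${\cal O}(t^{-1})$, to the small-loop integral $(\ref{asy2})$ around the origin, with the damped resolvent $R(i\lambda)$ inside.

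Into $(\ref{asy2})$ I would insert the expansion $(\ref{res-exp})$, $R(\lambda)=R_c(\lambda)+\lambda R_c(\lambda)b(\cdot)R_c(\lambda)+R_1(\lambda)$, and treat the three summands exactly as in the proof of Theorem~\ref{geom asym}. The $R_c(\lambda)$ term is handled as the main term $L_1$ there (applying the argument of Theorem~\ref{main} with the uniformly positive weight $c$, so that $H_1=H^1$): it produces the heat profile $c(\cdot)^{1/2}v(t)$ of the statement with error ${\cal O}(t^{-1})(\|u_0\|_{H^1}+\|u_1\|_{L^2})$, and this step does not see the dimension. For the remainder $R_1(\lambda)=\sum_{n\ge 1}(-1)^n\lambda R_c(\lambda)\sqrt{b}\{\lambda\tilde R(\lambda)\}^n\sqrt{b}R_c(\lambda)$ with $\tilde R(\lambda)=\sqrt{b}\lambda R_c(\lambda)\sqrt{b}$, the $d\le 2$ cut-off lemmas give $\|\tilde R(\lambda)\|_{L^2\to L^2}={\cal O}(|\lambda|\log|\lambda|)$ ($d=2$) or ${\cal O}(|\lambda|^{1/2})$ ($d=1$), which still tends to $0$; hence the Neumann series converges for $|\lambda|$ small and $\|R_1(\lambda)\|_{L^2\to L^2}=o(|\lambda|^{-1})$ near the origin, so this term is of strictly lower order than the leading singularity and contributes ${\cal O}(t^{-1})$.

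The only genuinely dimension-sensitive piece is the cross term $\lambda R_c(\lambda)b(\cdot)R_c(\lambda)$. Its high-frequency part $\phi_c(P)\lambda R_c(\lambda)b(\cdot)R_c(\lambda)$ is ${\cal O}(1)$ uniformly (the spectral cut-off removes one $|\lambda|^{-1}$ blow-up), so the argument of Proposition~\ref{high} gives it ${\cal O}(t^{-1})$. For the low-frequency part I would use $(\ref{asym resolvent2})$ to replace each $R_c(i\lambda)$ by $c(\cdot)^{-1/2}(i\lambda+\tilde P)^{-1}c(\cdot)^{-1/2}$ modulo ${\cal O}(|\lambda|)$ corrections (which cost only $t^{-1}$ after the contour deformation of Proposition~\ref{low decay}), and then expand the two heat resolvents via $(\ref{re})$ and $(\ref{re2})$ into the four pieces $L_4,L_5,L_6,L_7$, as for $d>2$. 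As there, $L_7$ has a holomorphic integrand (exponentially small) and $L_6$ behaves like $L_5$. The decisive terms are $L_4$ and $L_5$: the heat sandwich $e^{-t\tilde P/2}c^{-1/2}b\,c^{-1/2}e^{-t\tilde P/4}$ is bounded in $L^2\to L^2$ by $\|e^{-t\tilde P}\|_{L^1\to L^2}\|c^{-1/2}bc^{-1/2}\|_{L^1}\|e^{-t\tilde P}\|_{L^2\to L^\8}={\cal O}(t^{-d/2})$ (using the compact support of $b$ and Lemma~\ref{heat est}), while the Laplace-type time integral $\int^{t/2}_0\|e^{-t_1\tilde P}\|_{L^1\to L^2}\,dt_1={\cal O}\big(\int^{t/2}_0 t_1^{-d/4}\,dt_1\big)={\cal O}(t^{1-d/4})$, which is finite precisely because $d/4<1$ for $d=1,2$. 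Collecting these with the extra $t^{-1}$ produced by the loop integral over ${\cal C}$, and isolating the borderline logarithm that appears at $d=2$ (where $t^{-d/2}=t^{-1}$ is exactly critical and the ${\cal O}(\log|\lambda|)$ cut-off bound is in play), the worst contribution is ${\cal O}(t^{-1}\log t)$ for $d=2$ and ${\cal O}(t^{-1/2})$ for $d=1$, which matches the claim after absorbing the ${\cal O}(t^{-1})$ errors from the other pieces.

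I expect the main obstacle to be purely the bookkeeping of competing singular and decaying factors rather than any new idea. Three points need care: (i) that the Neumann series defining $R_1(\lambda)$ still closes with the degraded bounds $\|\tilde R(\lambda)\|={\cal O}(|\lambda|\log|\lambda|),\,{\cal O}(|\lambda|^{1/2})$ and that the resulting remainder stays strictly below the $|\lambda|^{-1}$ level; (ii) pinning down the borderline $d=2$ logarithm --- it sits at the critical balance where $\|e^{-t\tilde P}\|_{L^1\to L^2}={\cal O}(t^{-1/2})$ meets $\int^{t/2}_0 t_1^{-1/2}\,dt_1$ in $L_5$ (and where $\|\chi R_c(\lambda)\chi\|={\cal O}(\log|\lambda|)$ enters) --- and confirming it does not worsen to a power of $t$; and (iii) verifying for $d=1$ that no term beats $t^{-1/2}$, the governing contributions being $L_4$ (heat sandwich ${\cal O}(t^{-1/2})$) and $L_5$ (${\cal O}(t^{3/4})\cdot{\cal O}(t^{-1/4})\cdot t^{-1}={\cal O}(t^{-1/2})$). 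The cut-off resolvent lemmas for $d=1,2$, the energy decay, and the heat-kernel bounds are all already established in the excerpt, so no extra machinery is needed.
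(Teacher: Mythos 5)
Your strategy is exactly the paper's: the paper gives no separate proof for $d=1,2$ (it only says the result follows ``in the same way'' as Theorem \ref{geom asym}), and your plan --- rerun that proof, substituting the degraded cut-off resolvent bounds $\mathcal{O}(\log|\lambda|)$, $\mathcal{O}(|\lambda|^{-1/2})$ and the $t^{-d/4}$ diffusion bound, with the low-frequency Poincar\'e lemma and the energy decay being dimension-free --- is the intended one, and your accounting of $L_4$--$L_7$ is correct.

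One piece of your bookkeeping is off, and it happens to sit exactly where the statement's rates are decided. You assert that $R_1(\lambda)=o(|\lambda|^{-1})$ implies $R_1$ ``contributes $\mathcal{O}(t^{-1})$''; neither half of this is right for $d\le 2$. In the Neumann series $R_1=\sum_{n\ge1}(-1)^n\lambda R_c\sqrt{b}\,\tilde R(\lambda)^n\sqrt{b}R_c$, only the inner factors $\tilde R=\sqrt{b}\,\lambda R_c\sqrt{b}$ enjoy the two-sided cut-off gain; the two outer factors $R_c\sqrt{b}$ and $\sqrt{b}R_c$ carry a single cut-off and cost the full $\mathcal{O}(|\lambda|^{-1})$ each. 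The $n=1$ term is therefore $\lambda^2R_cbR_cbR_c=\mathcal{O}(|\lambda|^2\cdot|\lambda|^{-1}\cdot\log|\lambda|\cdot|\lambda|^{-1})=\mathcal{O}(\log|\lambda|)$ for $d=2$ and $\mathcal{O}(|\lambda|^{-1/2})$ for $d=1$, and the contour integration over $\mathcal{C}$ (length $1/t$ on $\mathcal{C}_o$ plus the $e^{-ast}$ tails) converts these into $t^{-1}\log t$ and $t^{-1/2}$ respectively --- not $t^{-1}$. So $R_1$ is not negligible at order $t^{-1}$; it is precisely the source of the logarithm at $d=2$ (your $L_5$ balance at $d=2$ is cleanly $t^{1/2}\cdot t^{-1/2}\cdot t^{-1}=t^{-1}$ with no log), and for $d=1$ it contributes at the same $t^{-1/2}$ level as $L_4$ and $L_5$. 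Since these contributions land exactly on the rates claimed in the theorem, your proof still closes once this term is estimated honestly, but as written the dismissal of $R_1$ is a genuine mis-step rather than a harmless simplification, and your ``main obstacle (i)'' should be resolved in this direction.
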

\begin{remark}
In the above theorems we have the arbitrariness to the choice of $c(\cdot)$ term. Probably our estimates come from the arbitrariness of $c(\cdot)$. 
For simplicity, we consider the following heat equation
$$((1+a(\cdot))\pl_t-\Delta)u=0,\quad u|_{t=0}=(1+a(\cdot))^{-1}u_0$$
Here $a(\cdot)$ is a non-negative compact support function. Then by Duhamel's principle we have
$$u(t)=e^{t\Delta}u(0)-\int^t_0e^{(t-s)\Delta }a(\cdot)\pl_s u(s)ds.$$
By integration by parts, we obtain
$$u(t)=e^{t\Delta}u_0-\int^t_{t/2}e^{(t-s)\Delta }a(\cdot)\pl_s u(s)ds- \int^{t/2}_{0}\left(\pl_te^{(t-s)\Delta }\right)a(\cdot)u(s)ds-e^{t/2\Delta}a(\cdot) u(t/2).$$
In the right hand side, the first term is the top term. Since $a(x)$ is a compact support function, we have
\begin{align*}
\|e^{t\Delta/2}a(\cdot) u(t/2)\|_{L^2}&\leq \|e^{t/2\Delta}\|_{L^1\to L^2}\|a(\cdot)u(t/2)\|_{L^1}\\
&\leq \|e^{t/2\Delta}\|_{L^1\to L^2}\|a(\cdot)\|_{L^1} \|u(t/2)\|_{L^2\to L^\8}\leq Ct^{-d/2}\|u(0)\|_{L^2}.
\end{align*}
From well-known estimates of heat propagators, we get the following estimate 
$$\left\|\int^t_{t/2}e^{(t-s)\Delta }a(\cdot)\pl_s u(s)ds\right\|_{L^2}\leq C\int^t_{t/2} (t-s)^{-d/4}s^{-d/4-1}\|a(\cdot)\|_{L^1}\|u(0)\|_{L^2}  ds \leq C 
\begin{cases}
t^{-1}\|u_0\|_{L^2} &(d=2),\\
t^{-1/2} \|u_0\|_{L^2} &(d=1).
\end{cases} $$
We can treat the another integral by same manner and get similar estimate. So the above estimate seems almost optimal.
\end{remark}
In the same manner, we can generalize Theorem ${\ref{main 2}}$.

\begin{theorem}\label{geom asym2}
Let $u$, $v$ are the solution of the equations $(\ref{damp cauchy})$, $(\ref{heat cauchy})$ respectively. Then 
under the assumption of Theorem 7.1, for any $\varepsilon>0$, there exists $C>0$ such that for any $u_0\in H^1\cap L^1$ and $\langle x\rangle u_0\in L^2$, $u_1\in L^2\cap L^1$  and $t>2$, the following asymptotic profile hold

$$\|u(t)-c(\cdot)^{1/2}v(t)\|_{L^2}\leq C{t^{-1-d/4}}(\|u_0\|_{H^1\cap L^1 }+\|u_1\|_{L^2\cap L^1} )$$
\end{theorem}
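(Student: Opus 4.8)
The plan is to follow the scheme of Theorem \ref{geom asym} and to upgrade each of its remainder estimates with the $L^1$--$L^2$ smoothing of the heat semigroup, exactly as Theorem \ref{main 2} upgrades Theorem \ref{main}. As in the proof of Theorem \ref{geom asym}, for $t$ large I would split $u(t)=\tilde\phi_c(P)u(t)+\phi_c(P)u(t)$. The high $P$-frequency piece $\phi_c(P)u(t)$ is handled by the argument of Propositions \ref{high} and \ref{low exp}: the cut-off removes the part of $R(i\lambda)$ singular at the origin, and inserting the heat flow as in $(\ref{re})$ together with Lemma \ref{heat est} yields a contribution of order $t^{-d/4-2}$, which is negligible. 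By $(\ref{asy2})$ the low-frequency piece equals $\frac{1}{2\pi}\int_{\cal C}\tilde\phi_c(P)e^{i\lambda t}\bigl(R(i\lambda)(i\lambda+c(\cdot))u_0+R(i\lambda)u_1\bigr)\,d\lambda$ modulo an exponentially small error, and into it I would substitute the resolvent expansion $(\ref{res-exp})$, namely $R(i\lambda)=R_c(i\lambda)+i\lambda R_c(i\lambda)b(\cdot)R_c(i\lambda)+R_1(i\lambda)$, estimating the three resulting contributions separately.

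The $R_c$-contribution is the main term and requires essentially no new work. Since $R_c(\lambda)=(P+\lambda^2+\lambda c(\cdot))^{-1}$ is precisely the operator $R(\lambda)$ of the abstract theory for the pair $A=P$, $B=c(\cdot)$, since $c$, $c^{\pm1/2}$ are smooth and bounded above and below, and since the hypotheses $(\ref{bddsemi})$, $(\ref{diff})$ with $m=d/4$, and $(\ref{B})$ are furnished by Lemma \ref{heat est}, the $R_c$-contribution is exactly the low-frequency part of the abstract damped-wave solution for $(P,c)$; Theorem \ref{main 2} then identifies it as the asserted heat profile, up to an error of order $t^{-d/4-1}\bigl(\|u_0\|_{H^1\cap L^1}+\|u_1\|_{L^2\cap L^1}\bigr)$, using $\|\sqrt P\,u_0\|_{L^2}+\|u_0\|_{L^2}\simeq\|u_0\|_{H^1}$.

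The two remaining contributions carry the compactly supported factor $b(\cdot)$, which is the source of the extra $t^{-d/4}$ over Theorem \ref{geom asym}. For the cross term I would apply $(\ref{asym resolvent2})$ to each factor $R_c(i\lambda)$, replacing it modulo an ${\cal O}(|\lambda|)$ error by $c(\cdot)^{-1/2}(i\lambda+\tilde P)^{-1}c(\cdot)^{-1/2}$, and then — as in the proof of Proposition \ref{K1} and in the decomposition $L_3=L_4+L_5+L_6+L_7$ of Theorem \ref{geom asym} — insert the heat semigroup through $(\ref{re})$ and $(\ref{re2})$ so that the compactly supported multiplier $c^{-1/2}b(\cdot)c^{-1/2}$ sits between heat operators $e^{-t_j\tilde P}$. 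Routing these through $L^2\to L^1\to L^2$ (using the compact support of $b$ for the $L^2\to L^1$ step), invoking $\|e^{-s\tilde P}\|_{L^1\to L^2}={\cal O}(s^{-d/4})$ over a time $s\sim t$, and using the bare powers of $\lambda$ together with the contour deformation of Proposition \ref{low decay} on ${\cal C}_o\cup{\cal C}_+\cup{\cal C}_-$ (which turns a $\lambda$-holomorphic factor into exponential decay and otherwise gives $t^{-1}$), each piece is seen to be ${\cal O}(t^{-d/4-1})$. The innermost $R_c(i\lambda)$, which acts on $(i\lambda+c(\cdot))u_0$ and on $u_1$ with no frequency cut-off and is genuinely singular at $\lambda=0$, is controlled by the cut-off resolvent bound $(\ref{cut-off resolvent})$ — this is exactly where the hypothesis $\langle x\rangle u_0\in L^2$ is used — together with an initial $e^{-t\tilde P/2}$-smoothing of the $L^1$ data, as in the treatment of $K_n$, $\tilde K_n$ in Theorem \ref{main 2}; the Neumann remainder $R_1(i\lambda)$, carrying several copies of $b(\cdot)$ and one extra power of $\lambda$ per term, is handled identically and is of the same or smaller order.

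The hard part will be the bookkeeping in this last step. The crucial point is to split each inserted heat semigroup so that one factor always acts over a time $\sim t$ and supplies both the $L^1\to L^2$ gain $t^{-d/4}$ and the surviving powers of $\tilde P$ (each $\tilde P^k e^{-s\tilde P}={\cal O}(s^{-k})$), while the innermost $e^{-t_1\tilde P}$ is only ever used as an $L^1\to L^1$-bounded operator, so that $\int_0^{t/2}dt_1$ costs merely a factor $t$ and one never meets the non-integrable $\int_0^{t/2}\|e^{-t_1\tilde P}\|_{L^1\to L^2}\,dt_1$ — this is precisely the device of Propositions \ref{K1} and \ref{low exp}, together with $e^{-s\tilde P}\tilde P=-\partial_s e^{-s\tilde P}$ and an integration by parts in $s$ when a surviving power of $\tilde P$ would otherwise force an unavailable $L^1\to L^1$ estimate of $e^{-s\tilde P}\tilde P$. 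A borderline term produces at worst a logarithm in $t$, which the loss $\varepsilon$ in the statement absorbs; apart from this, every ingredient is already available — the GCC resolvent bounds of this section, the cut-off resolvent estimates, Lemma \ref{heat est}, and the contour-deformation arguments of the earlier sections — so the proof is an assembly of pieces already in place.
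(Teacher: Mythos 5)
Your proposal follows exactly the route the paper takes (and in fact supplies considerably more detail than the paper, whose proof of Theorem \ref{geom asym2} is only a three-line outline): substitute the resolvent expansion $(\ref{res-exp})$ into the low-frequency representation, identify the $R_c$-contribution with the abstract result of Theorem \ref{main 2} applied to the pair $(P,c(\cdot))$ via Lemma \ref{heat est}, and absorb the $b(\cdot)$-carrying terms using the compact support of $b$ together with the $L^1\to L^2$ heat smoothing and the contour/semigroup insertions of Propositions \ref{K1} and \ref{low exp}. This is correct and essentially the same approach as the paper.
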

\begin{proof}
The proof is essentially same and we give an outline of the proof. We use the expansion $(\ref{res-exp})$. $R_c$ term can be treated by similar manner in the proof of Theorem $\ref{main 2}$ and we get the heat type asymptotic. 
\end{proof}
\end{section}
\begin{section}{Appendix}
In this appendix, we prove the energy decay estimate under the geometric control condition. We estimate the resolvent dividing three frequency parts in the same way as Theorem \ref{geo ene decay}. We can use Lemma $\ref{highreso}$ to estimate the high frequency part. So we need to estimate middle and low frequency parts. We take
$$L=P+\lambda^2+\lambda a(\cdot)$$
and estimate $\|Lu\|_{L^2}$ from below. The following argument is essentially similar to \cite{Jo}. By the geometric control condition, we can take a sequence $(x_n)\subset\R^d$ such that $a(x_n)\geq \alpha/T$ and any point of $\R^d$ is at bounded distance from the set $\cup_n\{x_n\}$. By the uniformly continuity, we can find $r,\beta>0$ such that $a(x)\geq \beta>0$ on $\omega=\cup_n B(x_n,r)$. 

We estimate the low frequency part. First we assume $|{\rm Re}\lambda|< |{\rm Im}\lambda|$. By taking imaginary part of $(Lu,u)_{L^2}$, we easily have
\begin{equation}\label{first}
\|Lu\|^2_{L^2}+\varepsilon^2 |\lambda|^2\|u\|^2_{L^2}+C\varepsilon |\lambda|^3\|u\|^2_{L^2} \geq \frac{\varepsilon |\lambda|^2}{C}\|\sqrt a u\|^2_{L^2}.
\end{equation}
We can take $\varepsilon>0$ arbitrary small and $C>0$ is some constant. As in \cite{Jo}, we set
$$Q_h=h^2e^{\varphi/h}(P+\lambda^2)e^{-\varphi/h}$$
here $h>0$ is sufficiently small.
Then we can take a uniformly bounded weight $\varphi$ such that the following sub-elliptic estimate holds
\begin{equation}\label{sub-elliptic}
\|Q_hu\|^2_{L^2}\geq Ch\|u\|^2_{L^2}
\end{equation}
here $C>0$ and $u$ satisfies $u|_\omega=0$, see \cite{Jo} for the proof. We take a real-valued cut-off function $\chi\in C^\8_0$ such that $\chi(s)=0$ for $s\geq\beta$ and $\chi\equiv 1$ in a neighborhood of $0$. Then $a^c =\chi\circ a$ vanishes on $\omega$ and any derivatives of  $a^c$ is controlled by $a$. We set $v=e^{\phi/h} a^cu$ and compute
$$Q_hv=h^2e^{\phi/h}a^c Lu-h^2e^{\phi/h}\lambda a^c au+ h^2e^{\phi/h}\left[P, a^c\right] u.$$
Since $v|_\omega=0$, from this identity and $(\ref{sub-elliptic})$,  we obtain
$$\|h^2e^{\phi/h}a^c Lu\|^2_{L^2}+\|h^2e^{\phi/h}\lambda a^c  au\|^2_{L^2}+ \|h^2e^{\phi/h}\left[P, a^c \right] u\|^2_{L^2}\geq \|Q_hv\|^2_{L^2}\geq Ch\|v\|_{L^2}^2.$$
Since $\varphi$ is uniformly bounded, for fixed $h$, $e^{\phi/h}$ is uniformly positive and bounded . Thus we have
$$\|Lu\|^2_{L^2}+C|\lambda|^2\|au\|^2_{L^2}+ C\|\left[P, a^c\right] u\|^2_{L^2}\geq \frac{1}{C}\|a^cu\|_{L^2}^2.$$
Since derivatives of $a^c$ are controlled by $a$, for sufficient small $|\lambda|$, we have
\begin{equation}\label{second}
\varepsilon^{3/2}|\lambda|^2 \|Lu\|^2_{L^2}+C\varepsilon^{3/2}|\lambda|^2\|{\sqrt a}u\|^2_{L^2}+ C\varepsilon^{3/2}|\lambda|^2 \sum_{i,j=1}^d\|(\pl_j  a^c )\pl_iu\|^2_{L^2}\geq \frac{\varepsilon^{3/2 }|\lambda|^2}{C}\|a^cu\|_{L^2}^2.
\end{equation}
For $k=1,2,\cdots,d$, using integration by parts, we have
$$(Pu, (\pl_k a^c)^2u)_{L^2}=\sum_{i,j=1}^d(g_{ij}(\pl_k a^c)\pl_iu, (\pl_k a^c)\pl_j u)_{L^2}+2\sum_{i,j=1}^d((\pl_k a^c)\pl_iu, g_{ij}(\pl_j\pl_k a^c)u)_{L^2}.$$
Since derivatives of $a^c$ are controlled by $a$, from this identity, we have 
$$\|Lu\|_{L^2}^2+\|\sqrt au\|^2_{L^2}\geq C \sum_{k,i=1}^d\|(\pl_k  a^c )\pl_iu\|^2_{L^2}$$
for sufficiently small $|\lambda|$. So from $(\ref{second})$, we get
\begin{equation}\label{third}
\varepsilon^{3/2}|\lambda|^2 \|Lu\|^2_{L^2}+C\varepsilon^{3/2}|\lambda|^2\|{\sqrt a}u\|^2_{L^2}\geq \frac{\varepsilon^{3/2 }|\lambda|^2}{C}\|a^cu\|_{L^2}^2.
\end{equation}
Thus from $(\ref{first})$ and $(\ref{third})$, we obtain
$$\|Lu\|^2_{L^2}+C\varepsilon^2 |\lambda|^2\|u\|^2_{L^2}+C\varepsilon^{3/2}|\lambda|^2\|{\sqrt a}u\|^2_{L^2}+C\varepsilon |\lambda|^3\|u\|^2_{L^2} \geq \frac{\varepsilon |\lambda|^2}{C}\|\sqrt a u\|^2_{L^2}+\frac{\varepsilon^{3/2 }|\lambda|^2}{C}\|a^cu\|_{L^2}^2.$$
Now taking $\varepsilon$ sufficiently small and $|\lambda|<\varepsilon$, since $a^c(x)=1$ if $a(x)=0$, we conclude
\begin{equation}\label{forth}
\|Lu\|^2_{L^2}\geq C|\lambda|^2\|u\|_{L^2}^2.
\end{equation}
From this estimate, we get $R(\lambda)={\cal O}(|\lambda|^{-1})$ for sufficiently small $|\lambda|$ around the imaginary axis. If ${\rm Re}\lambda \geq|{\rm Im}\lambda|$, by taking real part of $(Lu,u)_{L^2}$, we have
$$\|Lu\|^2_{L^2}+\varepsilon^2 |\lambda|^2\|u\|^2_{L^2}\geq C\varepsilon |\lambda|^2\|\sqrt a u\|^2_{L^2}. $$
By similar argument, we can also prove $(\ref{third})$ in this case. So we have $(\ref{forth})$ and get $R(\lambda)={\cal O}(|\lambda|^{-1})$ for ${\rm Re} \lambda>0$. Thus we have estimated the low frequency part. The estimate of the middle frequency part is essentially same and more easy (we only need the reversibility of $P-\lambda^2+i\lambda a$ if $\lambda$ is real and in a compact subset of $(0,\8)$) so we omit the proof. From these estimate, We conclude the following energy decay estimate. 
\begin{theorem}\label{app ene decay}
We assume the geometric control condition. Let $u$ be a solution of $(\ref{euc damp})$ with the initial data $u|_{t=0}=u_0\in H^1$ and $\pl_tu|_{t=0}=u_1\in L^2$. 
Then there exists a constant $C$ such that for $t>1$ the following bound holds
$$\|\nabla u\|^2_{L^2}+\|\pl_t u\|^2_{L^2}\leq C(\sum_{i,j=1}^d(g_{ij}\pl_iu,\pl_ju)_{L^2}+\|\pl_t u\|^2_{L^2}) \leq C t^{-1}(\|u_0\|^2_{H^1}+\|u_1)\|_{L^2}).$$
\end{theorem}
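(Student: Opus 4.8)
The plan is to reduce the statement to the abstract energy--decay argument of Section~4, applied with $A=P$, $B=a(\cdot)$, $H=L^2(\R^d)$ and $R(\lambda)=(P+\lambda^2+\lambda a(\cdot))^{-1}$; note that Section~2, Section~3 and the proof of Theorem~4.1 only use that $B$ is bounded and non-negative, so the fact that $a$ may vanish on a compact set is harmless here. First I would assemble the resolvent bounds already in hand: $\|R(\lambda)\|_{L^2\to L^2}={\cal O}(|\lambda|^{-1})$ for small $|\lambda|$ in a neighbourhood of the imaginary axis and for ${\rm Re}\lambda>0$ from $(\ref{forth})$; the middle--frequency bound of the same order from the Fredholm argument combined with the unique continuation property; and the high--frequency bound $\|R(i\tau)\|_{L^2\to L^2}={\cal O}(|\tau|^{-1})$ for $|\tau|$ large from Lemma~$\ref{highreso}$, which is where the geometric control condition enters. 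Taken together these give $\|R(\lambda)\|_{L^2\to L^2}={\cal O}(|\lambda|^{-1})$, hence also $={\cal O}(|{\rm Im}\lambda|^{-1})$, throughout a region $\{|{\rm Re}\lambda|<c,\ {\rm Im}\lambda\not=0\}\cup\{{\rm Re}\lambda>0\}$.

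Second, I would make sure these bounds hold on a contour of the form ${\rm Re}\,i\lambda=-a$ with $a>0$ small. For low and middle frequencies this is contained in the above. For high frequencies one writes, for $\lambda=i\tau+\mu$,
$$P+\lambda^2+\lambda a=(P+(i\tau)^2+i\tau a)\bigl(Id+R(i\tau)\bigl[(2i\tau\mu+\mu^2)Id+\mu a\bigr]\bigr),$$
and since $\|R(i\tau)\|\,(|\tau||\mu|+|\mu|^2+|\mu|\,\|a\|_{L^\8})={\cal O}(|\mu|)$ uniformly for $|\tau|$ large, a Neumann series produces $R(\lambda)$ together with the bound ${\cal O}(|\lambda|^{-1})$ in a strip; equivalently one may invoke, as in Section~7, the continuation of $(\lambda-{\cal A})^{-1}$ across the imaginary axis at high frequency obtained from $(\lambda-{\cal A})^{-1}={\cal O}(1+|\lambda|^{-1})$.

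Third, plugging $\|R(\lambda)\|_{L^2\to L^2}={\cal O}(|\lambda|^{-1})$ into Proposition~$\ref{resolvent(prop)}$ and the subsequent (general) resolvent estimates of Section~3 reproduces, in the present setting, exactly the conclusions of Proposition~$\ref{resolvent estimate}$, and hence the three facts recorded in Remark~$\ref{ene rem}$: existence of $(\lambda-{\cal A})^{-1}$ for ${\rm Re}\lambda>-c$, ${\rm Im}\lambda\not=0$; a uniform $|{\rm Im}\lambda|^{-1}$--type bound on ${\cal H}_0$ at high frequency; and, near the origin, $\|(\lambda-{\cal A})^{-1}\|_{{\cal H}\to{\cal H}_0}={\cal O}(|\lambda|^{-1/2})$ together with $\|(\lambda-{\cal A})^{-1}\|_{{\cal H}_0\to{\cal H}_0}={\cal O}(|\lambda|^{-1}+1)$. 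Since the proof of Theorem~4.1 uses nothing beyond these, it applies verbatim and yields $\|U(t){}^t(u_0,u_1)\|_{{\cal H}_0}\le Ct^{-1/2}\|{}^t(u_0,u_1)\|_{{\cal H}}$ for $t>1$. Finally I would square this and unwind: by uniform ellipticity $\|U(t){}^t(u_0,u_1)\|_{{\cal H}_0}^2=\sum_{i,j}(g_{ij}\pl_iu,\pl_ju)_{L^2}+\|\pl_tu\|_{L^2}^2$ dominates $\frac1C(\|\nabla u\|_{L^2}^2+\|\pl_tu\|_{L^2}^2)$, while $\|{}^t(u_0,u_1)\|_{{\cal H}}^2=\|u_0\|_{L^2}^2+\sum_{i,j}(g_{ij}\pl_iu_0,\pl_ju_0)_{L^2}+\|u_1\|_{L^2}^2\le C(\|u_0\|_{H^1}^2+\|u_1\|_{L^2}^2)$, which is the stated estimate.

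I expect the genuinely substantial point to be the one already carried out before the statement, namely the low--frequency bound $(\ref{forth})$: without a lower bound on $a$ outside a compact set the elementary Poincar\'e argument of Section~7 is unavailable, and one is forced into the Carleman / sub--elliptic estimate $(\ref{sub-elliptic})$ in the spirit of \cite{Jo}. A secondary technical point is the strip continuation at high frequency, where the $\lambda^2$ term grows and one must check it is still absorbed by $\|R(i\tau)\|={\cal O}(|\tau|^{-1})$; this is precisely why Lemma~$\ref{highreso}$ is needed in the sharp form rather than as mere invertibility. Everything past these two points is a routine application of the Section~4 machinery.
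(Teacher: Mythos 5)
Your overall architecture is exactly the paper's: establish $\|R(\lambda)\|_{L^2\to L^2}={\cal O}(|\lambda|^{-1})$ by splitting into low, middle and high frequencies (low via the Carleman/sub--elliptic bound culminating in $(\ref{forth})$, high via Lemma $\ref{highreso}$ plus a continuation into a strip $\mathrm{Re}\,i\lambda=-a$), and then feed these bounds through Proposition $\ref{resolvent(prop)}$ into the Section~4 contour argument, exactly as Remark $\ref{ene rem}$ prescribes. Your Neumann--series computation for the strip continuation at high frequency is correct and makes explicit what the paper dispatches in one sentence, and the final unwinding of the ${\cal H}_0$ and ${\cal H}$ norms via uniform ellipticity is fine. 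You also correctly identify the sub--elliptic estimate $(\ref{sub-elliptic})$ as the genuinely new ingredient compared with the strictly--positive--damping case.

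There is, however, one concrete step that fails as proposed: the middle--frequency resolvent bound. You invoke ``the Fredholm argument combined with the unique continuation property,'' i.e.\ the Section~7 device of choosing a compactly supported $b$ with $c=a+b>c_0>0$ and treating $\lambda b(\cdot)(P+\lambda^2+\lambda c)^{-1}$ as a compact perturbation. That construction requires $a(x)>c_0$ outside a compact set, which is precisely the hypothesis dropped in Theorem $\ref{app ene decay}$: under the geometric control condition alone, $a$ is only bounded below on a union of balls $\omega=\cup_n B(x_n,r)$ whose centers form a net, and $a$ may vanish on an unbounded set. No compactly supported corrector can then make $a+b$ uniformly positive, and with a non--compactly--supported $b$ the operator $b(\cdot)(P+\lambda^2+\lambda c)^{-1}$ is no longer compact, so Fredholm theory does not apply. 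The paper instead asserts that the middle--frequency estimate follows by the \emph{same} quantitative unique--continuation/sub--elliptic argument used for low frequencies (``we only need the reversibility of $P-\lambda^2+i\lambda a$ for $\lambda$ real in a compact subset of $(0,\infty)$''), and omits the details. To close your proof you would need to replace the Fredholm step by such an argument (or by a compactness--contradiction argument exploiting $\omega$); everything else in your proposal then goes through as written.
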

\end{section}
\begin{small}

\begin{center}

\end{center}

Department of Education, Wakayama University,

930, Sakaedani, Wakayama city , Wakayama 640-8510, Japan 

E-mail:h2480@center.wakayama-u.ac.jp
\end{small}
 \end{document}